\newtheorem{defn}{Definition}[section]
\newtheorem{qu}{Question}[section]
\newtheorem{lem}{Lemma}[section]
\newtheorem{prop}{Proposition}[section]
\newtheorem{thm}{Theorem}[section]
\newtheorem{cor}{Corollary}[section]
\newtheorem{rem}{Remark}[section]
\title{\bfseries Irreducible cone spherical metrics and stable extensions of two line bundles}
\author{Lingguang Li, Jijian Song and Bin Xu}
\begin{document}
\maketitle

\noindent{\small {\bfseries Abstract:}  A cone spherical metric is called {\it irreducible} if any developing map of the metric does {\it not} have monodromy in ${\rm U(1)}$. By using the theory of indigenous bundles, we construct on a compact Riemann surface $X$ of genus $g_X \geq 1$ a canonical surjective map from the moduli space of stable extensions of two line bundles to that of irreducible metrics with cone angles in $2 \pi \mathbb{Z}_{>1}$, which is generically injective in the algebro-geometric sense as $g_X \geq 2$.  As an application, we prove the following two results about irreducible metrics: \\
$\bullet$ as $g_X \geq 2$ and $d$ is even and greater than $12g_X - 7$, the effective divisors of degree $d$ which could be represented by irreducible metrics form an arcwise connected Borel subset of Hausdorff dimension $\geq 2(d+3-3g_X)$ in ${\rm Sym}^d(X)$;\\
$\bullet$ as $g_X \geq 1$, for almost every effective divisor $D$ of degree {\it odd} and greater than $2g_X-2$ on $X$, there exist finitely many cone spherical metrics representing $D$.}\\

\noindent {\it MSC2020:} {\small primary 30F45; secondary 14H60.}

\noindent{\it Keywords:} {\small cone spherical metric, indigenous bundle, stable extension, ramification divisor map}

\section{Introduction}
\label{Intro}
Lots of research works have been done on the existence and uniqueness of cone spherical metrics on compact Riemann surfaces from many aspects of mathematics, including complex analysis, PDE, synthetic geometry, etc. However, to the best knowledge of the authors, it seems that little appears to be known on the algebro-geometric side. We mainly focus on irreducible cone spherical metrics with cone angles in $2\pi{\Bbb Z}_{>1}$. Such a metric on the underlying Riemann surface gives an effective divisor $D$ supported at its cone singularities. If $p_{j}$ is a cone singularity of the metric, then the coefficient $ \beta_{j} \in {\Bbb Z}_{>0}$ of $D$ and the cone angle $\alpha_{j} \in 2\pi{\Bbb Z}_{>1}$ at $p_{j}$ are related by the equality $\beta_{j} = \frac{\alpha_{j}}{2\pi}-1$. We also say that {\it this irreducible metric represents the divisor $D$}.
We say an extension $0\to L_1\to E\to L_2\to 0$ of a line bundle $L_2$ by another line bundle $L_1$ on a compact Riemann surface ${\it stable}$ if and only if $E$ is a rank two stable vector bundle.

We establish a correspondence between irreducible metrics representing effective divisors and stable extensions of two line bundles (modulo tensoring a line bundle) on a compact Riemann surface $X$ of genus $g_X \geq 1$. Precisely speaking, we find a natural surjective map from the moduli space of stable extensions to that of irreducible metrics representing effective divisors, which  has at most $2^{2g_X}$ preimages for a fixed irreducible metric and is generically one-to-one as $g_X \geq 2$ (Theorem \ref{thm:corr}). There always exists a stable extension $0\to L_1\to E\to L_2\to 0$  of $L_2$ by $L_1$ if and only if the degree inequality
\[ \deg \, L_1 < \deg \, L_2 \]
holds, and with a slight modification on elliptic curves (Theorem \ref{thm:stable}). Moreover, we could specify the effective divisor represented by the irreducible metric corresponding to a given stable extension  $0\to L_1\to E\to L_2\to 0$ in terms of the Hermitian-Einstein metric on $E$ (Theorem \ref{thm:rami}). Therefore, we obtain a real analytic map ${\frak R}_{(L_1,L_2)}$, called the {\it ramification divisor map} (Theorem \ref{thm:expressionofR}). Furthermore, ${\frak R}_{(L_1,L_2)}$ maps the space of stable extensions of $L_2$ by $L_1$ to the complete linear system $\left|L_1^{-1}\otimes L_2 \otimes K_{X}\right|$, and its image contains exactly all the effective divisors in $\left|L_1^{-1}\otimes L_2 \otimes K_{X}\right|$ which could be represented by some irreducible metric (Corollary \ref{cor:image}). Hence, the existence and uniqueness problem of irreducible metrics representing effective divisors is boiled down to understanding the corresponding properties of the ramification divisor map. In particular, when $( \deg\,L_2-\deg\, L_1)$ is odd and positive, it is proved by the PDE method that ${\frak R}_{(L_1,L_2)}$ is a surjective map \cite{BdMM11, Troyanov91}, i.e. each effective divisor $D$ with degree odd and greater than $2g_X-2$ could be represented by at least one irreducible metric. However, the method of PDE in \cite{BdMM11, Troyanov91}  turns out to be invalid for  cone spherical metrics representing effective divisors of degree even due to the bubbling phenomena. In this manuscript, with the help of the preceding algebro-geometric framework, we could understand irreducible metrics representing effective divisors with degree being either odd or even in a unified way. Among others, we obtain \\

\noindent {\bf Theorem} (Corollaries \ref{cor:irr} and \ref{cor:finitemetric})
{\it Let $X$ be a compact Riemann surface of genus $g_X>1$ and $D$ an effective divisor of degree $d>2g_X-2$.
\begin{enumerate}

\item If $d$ is even, then there exists an irreducible metric representing some effective divisor linearly equivalent to $D$.

\item If $d>12g_X-7$ is even, all the effective divisors as above form an arcwise connected Borel subset of Hausdorff dimension $\geq 2(d+3-4g_X)$ in $|D|$, and all the effective divisors of degree $d$ represented by irreducible metrics form an arcwise connected Borel subset of Hausdorff dimension $\geq 2(d+3-3g_X)$ in ${\rm Sym}^d(X)$.

\item If $d$ is odd, then, for almost every effective divisor $D$ on $X$, there exist finitely many cone spherical metrics representing $D$ on $X$.
\end{enumerate}}

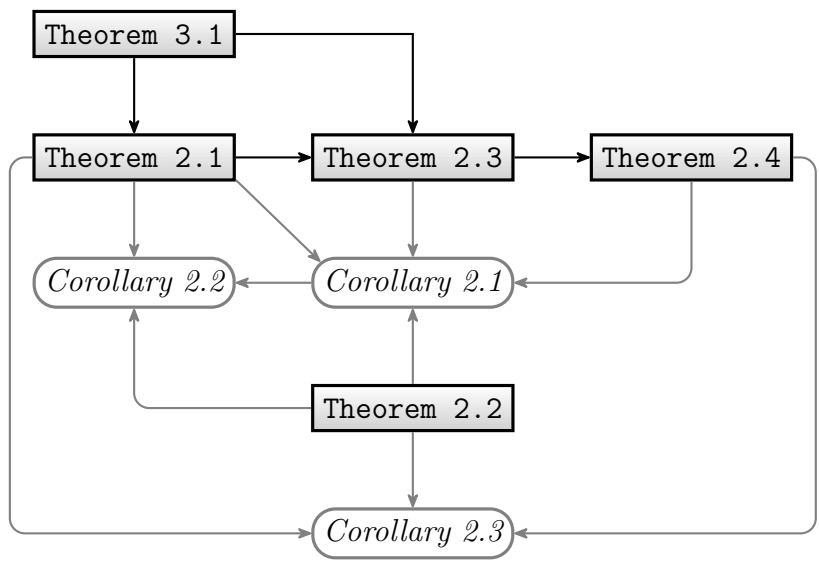
\begin{figure}[H]
\centering
\begin{tikzpicture}[
>={Stealth[round]},thick,
theorem/.style={rectangle, minimum size=6mm, very thick, draw=black, top color=white, bottom color=black!20, font=\ttfamily},
corollary/.style={rectangle, minimum size=6mm, rounded corners=3mm, very thick, draw=gray, top color=white, bottom color=white, font=\itshape}]

\node (thm3-1) [theorem] {Theorem \ref{thm:correspondence}};
\node (thm2-1) [theorem, below=of thm3-1] {Theorem \ref{thm:corr}};
\node (thm2-3) [theorem, right=of thm2-1] {Theorem \ref{thm:rami}};
\node (thm2-4) [theorem, right=of thm2-3] {Theorem \ref{thm:expressionofR}};
\node (cor2-1) [corollary, below=of thm2-3] {Corollary \ref{cor:image}};
\node (thm2-2) [theorem, below=of cor2-1] {Theorem \ref{thm:stable}};
\node (cor2-2) [corollary, below=of thm2-1] {Corollary \ref{cor:irr}};
\node (cor2-3) [corollary, below=of thm2-2] {Corollary \ref{cor:finitemetric}};

\path (thm3-1) edge[->] (thm2-1)
         (thm2-1) edge[->] (thm2-3)
         (thm2-3) edge[->] (thm2-4);
\draw[->] (thm3-1.east) -| (thm2-3);

\path (thm2-1) edge[gray, ->] (cor2-2)
         (thm2-3) edge[gray, ->] (cor2-1)
         (cor2-1) edge[gray, ->] (cor2-2)
         (thm2-2) edge[gray, ->] (cor2-1)
         (thm2-2) edge[gray, ->] (cor2-3)
         ($(thm2-1.east) - (0, 3mm)$) edge[gray, ->] ($(cor2-1.west) + (1.2mm, 2.8mm)$);
\draw[rounded corners=2mm, gray, ->] (thm2-2.west) -| (cor2-2);
\draw[rounded corners=2mm, gray, ->] (thm2-4.south) |- (cor2-1);
\draw[rounded corners=2mm, gray, ->] (thm2-4.east) -- ($(thm2-4.east) + (3mm,0)$) |- (cor2-3);
\draw[rounded corners=2mm, gray, ->] (thm2-1.west) -- ($(thm2-1.west) - (3mm, 0)$) |- (cor2-3);

\end{tikzpicture}
\caption{Leitfaden}
\label{fig:Leitfaden}
\end{figure}

We conclude this section by explaining the organization of the left five sections of this manuscript.
In the next section, we give the relevant background of cone spherical metrics in detail and state precisely the main results consisting of Theorems 2.1-4 and Corollaries 2.1-3 in a logical order. In particular, we prove Corollaries 2.2-3 directly there.
In Section \ref{sec:bundles}, we establish a correspondence in Theorem \ref{thm:correspondence}
between cone spherical metrics representing effective divisors and projective bundles associated to rank two polystable vector bundles with sections which are not locally flat, which contains the correspondence in Theorem \ref{thm:corr} as a special case. Among others, by using algebra of vector bundles, we prove the whole of Theorem \ref{thm:corr} in this section. In Section \ref{sec:langetype}, we describe an embedding of $X$ into a projective space induced by $H^0(X, K_X\otimes L_{2} \otimes L_{1}^{-1})$, which we use to give an elementary proof of Theorem \ref{thm:stable}. In Section \ref{sec:ramidivmap}, by using differential geometry on vector bundles, we prove Theorem \ref{thm:rami}, Theorem \ref{thm:expressionofR} and Corollary \ref{cor:image}. We discuss the relation of this manuscript to existing works and some open questions in the last section. Figure \ref{fig:Leitfaden} expresses the internal logical relationship lying in the main results of this manuscript.

\section{Background and main results}
\label{main}
\subsection{Cone spherical metrics}
\label{sec:knownresult}
Let $X$ be a compact Riemann surface of genus $g_X$ and $D = \sum_{j=1}^n \, \beta_j \, p_j$ an $\mathbb{R}$-divisor on $X$ such that $p_1, \cdots, p_n$ are $n \geq 1$ distinct  points on $X$ and $0 \neq \beta_j > -1$. We call a smooth conformal metric $g$ on $X \setminus {\rm supp} \, D := X \setminus \{p_1, \cdots, p_n\}$ a {\it conformal metric representing $D$ on $X$} if, for each point $p_j$, there exists a complex coordinate chart $(U_{j}, \, z_{j})$ centered at $p_j$ such that the restriction of $g$ to $U_{j} \setminus \{ p_j \}$ has form $e^{2 \varphi_{j} } \, |dz_{j}|^2$, where the real valued function $\varphi_{j} - \beta_j \, \ln \, |z_{j} - z_{j}(p_{j})|$ extends to a continuous function on $U_{j}$. In other words,  $g$ {\it has a conical singularity at each $p_j$ with cone angle $2 \pi( 1 + \beta_j )$}.  The generalized Gauss-Bonnet formula
\begin{equation}
\label{equ:GB}
 \frac{1}{2\pi}\int_{X\setminus {\rm supp}\, D}\, K_g \, {\rm d}A_g = 2-2g_X+\deg\, D
\end{equation}
holds \cite[Proposition 1]{Troyanov91} and \cite[Theorem 1]{Fang2019} as the Gaussian curvature function $K_g$ of $g$ is integrable on $X \setminus {\rm supp}\, D$ and
we denote the degree of $D$ by $\deg \, D := \sum_{j=1}^n \, \beta_j$. We call $g$ a {\it cone spherical metric representing $D$} if $K_g \equiv +1$ outside ${\rm supp} \, D=\{p_1, \cdots, p_n\}$. We also note that the PDEs satisfied by cone spherical metrics form a special class of mean field equations, which are relevant to both Onsager's vortex model in statistical physics \cite{CLMP92} and the Chern-Simons-Higgs equation in superconductivity \cite{CLW2004}. Similarly, we could define {\it cone hyperbolic metrics} or {\it cone flat ones} if their Gaussian curvatures equal identically $-1$ or $0$ outside the conical singularities. People naturally came up with

\begin{qu}
\label{qu:hfs}
Characterize all real divisors with coefficients lying in $(-1,\, \infty)\setminus\{0\}$ on $X$ which could be represented by cone hyperbolic, flat or spherical metrics, respectively.
\end{qu}

The generalized Gauss-Bonnet formula \eqref{equ:GB} gives for Question \ref{qu:hfs} a natural necessary condition of
\begin{equation}
\label{equ:nece}
 {\rm sgn} \big (2 - 2g_X + \deg \, D \big ) = {\rm sgn}(K_g).
\end{equation}
It is also sufficient for the cases of hyperbolic and flat metrics, and the hyperbolic or flat metric representing $D$ on $X$ exists uniquely up to scaling  \cite{Hei62, McOwen88, Tro86, Troyanov91}. The history of the research works of cone hyperbolic metrics goes back to {\' E}. Picard \cite{Pi1905} and H. Poincar{\' e} \cite{Po1898}. However,  \eqref{equ:nece} is {\it not} sufficient for the existence of  cone spherical metrics \cite{Tro89}. And the situation is even worse in the spherical case that the uniqueness result does not hold in general \cite[Theorem 1.5]{CWWX2015}. As a consequence, the spherical case of Question \ref{qu:hfs} is still widely open although many mathematicians had attacked or have been investigating it by using various methods and obtained a good understanding of the question \cite{BdMM11, CLW14, CWWX2015, Er04, Er1706, EGT1405, EG15, EGT1409, EGT1504, Er1905, Luo93, MP1505, MP1807, MZ1710, MZ1906, SCLX2017, Troyanov91, UY2000, XuZhu19, Zhu1902}.

In this subsection and the next, we list some of the known results which are relevant to this manuscript. Troyanov proved an existence theorem \cite[Theorem C]{Troyanov91} on the problem of prescribing the Gaussian curvature on surfaces with the conical singularities given in subcritical regimes. Troyanov's theorem implies that there exists a cone spherical metric representing the $\mathbb{R}$-divisor $D=\sum_{j=1}^n\,\beta_j p_j$ with $-1<\beta_j\not=0$ on $X$ if
\[ 0 < 2 - 2g_X + \deg \, D < \min \, \Big\{2, \, 2 + 2 \min_{1 \leq  j \leq n} \, \beta_j \Big\}. \]
Bartolucci-De Marchis-Malchiodi obtained a very general existence theorem \cite[Theorem 1.1]{BdMM11} on the same problem in supercritical regimes. In particular, they showed that there exists a cone spherical metric representing the effective divisor $D = \sum_{j = 1}^n \, \beta_j p_j$ on $X$ if the following conditions hold:
\begin{itemize}
\item $g_X \geq 1$ and $\beta_j>0$ for all $1\leq j\leq n$,
\item $2-2g_X+\deg\, D$ is greater than $2$ and does not belong to the  discrete subset
$\left \{ \mu > 0 \mid \mu = 2k + 2 \sum_{j=1}^n n_j (1 + \beta_j ), k \in \mathbb{Z}_{\geq 0}, n_j \in \{ 0,1\} \right \}$ of ${\Bbb R}$.
\end{itemize}
Combining the results by Troyanov and Bartolucci-De Marchis-Malchiodi, we could obtain that if $D = \sum_{j=1}^n\,\beta_j p_j$ is an effective divisor of  degree {\it odd} on a compact Riemann surface $X$ of genus $g_X \geq 1$, then there always exists a cone spherical metric representing $D$ provided the natural necessary condition of $\deg \, D>2g_X-2$ holds. This existence result on elliptic curves for cone spherical metrics was also obtained independently by Chen-Lin  as a corollary of a more general existence theorem \cite[Theorem 1.3]{CL15} for a class of mean field equations of Liouville type with singular data.

\subsection{Irreducible  metrics}
In this manuscript we would like to focus on cone spherical metrics with cone angles in $2\pi \mathbb{Z}_{>1}$, i.e. cone spherical metrics representing effective divisors. We shall establish an algebro-geometric framework for such metrics and obtain some new existence results of them as an application. In order to state them in detail, we need to prepare some notations.

We give a quick review of developing maps of cone spherical metrics representing effective divisors and recall the concept of reducible/irreducible (cone spherical) metrics \cite{CWWX2015, Er04, UY2000}. We call a non-constant multi-valued meromorphic function $f \colon X \to \mathbb{P}^1=\mathbb{C}\cup \{\infty\}$ {\it a projective function on $X$} if the monodromy of $f$ lies in the group ${\rm PSL}(2,\,\mathbb{C})$ consisting of all M{\" o}bius transformations. Then, for a projective function $f$ on $X$, we could define its {\it ramification divisor} $R(f)$, which is an effective divisor on $X$. It was proved in \cite[Section 3]{CWWX2015} that there is a cone spherical metric representing an effective divisor $D$ on $X$ if and only if there exists a projective function $f$ on $X$ such that $R(f)=D$ and the monodromy of $f$ lies in
\[ {\rm PSU(2)}:=\left\{ z \mapsto \frac{az+b}{-\overline{b}z + \overline{a}} \; \Big| \; |a|^2 + |b|^2 = 1 \right\} \subset {\rm PSL}(2,\,\mathbb{C}) \]
(we call that {\it $f$ has projective unitary monodromy} for short later on), and $g$ equals the pull-back $f^*g_{\rm st}$ of the standard conformal metric $g_{\rm st}=\frac{4|dw|^2}{(1+|w|^2)^2}$ on $\mathbb{P}^{1}$ by $f$. At this moment, we call $f$ a {\it developing map} of the metric $g$, which is unique up to a pre-composition with a M{\" o}bius transformation in ${\rm PSU}(2)$. In particular,  it is well known that effective divisors represented by cone spherical metrics on the Riemann sphere $\mathbb{P}^1$ are exactly ramification divisors of rational functions on $\mathbb{P}^1$ \cite[Theorem 1.9]{CWWX2015}, and hence all of them have even degree. Recalling the universal (double) covering $\pi \colon {\rm SU(2)}\to {\rm PSU(2)}$, we make an observation (Corollary \ref{cor:criterion}) that an effective divisor $D$ represented by a cone spherical metric $g$ on $X$ has {\it even} degree if and only if the monodromy representation $\rho_f \colon \, \pi_1(X) \to {\rm PSU(2)}$ of a developing map $f$ of the metric $g$ could be lifted to a group homomorphism $\widetilde{\rho_f} \colon \, \pi_1(X) \to {\rm SU}(2)$ such that there holds the following commutative diagram
\[ \xymatrix{
                                                                                   & {\rm SU(2)} \ar[d] ^{\pi} \\
\pi_{1}(X) \ar[ru]^{\widetilde{\rho_f}} \ar[r]_{\rho_{f}} & {\rm PSU(2)}.
}\]

A cone spherical metric  is called {\it reducible} if and only if some developing map of the metric has monodromy in ${\rm U}(1) = \left\{ z \mapsto e^{\sqrt{-1}\, t}z \mid t \in [0, \, 2\pi) \right\}$. Otherwise, it is called {\it irreducible}. Q. Chen, W. Wang, Y. Wu and B.X. \cite[Theorems 1.4-5]{CWWX2015} established a correspondence between meromorphic one-forms with simple poles and periods  in $\sqrt{-1}\mathbb{R}$ and general reducible cone spherical metrics, whose cone angles do not necessarily lie in $2\pi \mathbb{Z}_{>1}$. In particular, an effective divisor $D$ represented by a reducible metric must have {\it even} degree since reducible metrics satisfy the lifting property in the last paragraph \cite[Lemma 4.1]{CWWX2015}. For simplicity, we may look at the degree of an effective divisor represented by a cone spherical metric {\it the degree of the metric}. Recall the fact in Subsection \ref{sec:knownresult} that if $D$ is an effective divisor with degree being odd and greater than $2g_X-2$ on a compact Riemann surface $X$ of genus $g_X \geq 1$, there always exists a cone spherical metric representing $D$ on $X$. However,  the PDE method used in its proof seems {\it invalid} for the case of even degree. The reason lies in that there exists no a priori $C^0$ estimate for the corresponding PDE due to the blow-up phenomena caused by the one-parameter family of reducible metrics representing the same effective divisor of even degree \cite[Theorems 1.4-5]{CWWX2015}.

\subsection{An algebro-geometric framework for irreducible metrics}

Motivated by the theory of indigenous bundles initiated by R. C. Gunning \cite{Gunning67} and developed by  R. Mandelbaum \cite{Mand72,Mand73}, we take a new approach to understand cone spherical metrics representing effective divisors of degree either odd or even in a unified way.
Not only does the following algebro-geometric framework for irreducible metrics representing effective divisors shed new light on the connection between differential geometry and algebraic geometry underlying these metrics, but also it plays a crucial role for the existence problem of irreducible metrics of even degree. We postpone the explanation of the concepts of various relevant holomorphic bundles to Section \ref{sec:bundles}.

To state the theorem on this framework, we need prepare some notations. Let $X$ be a compact Riemann surface of genus $g_X\geq 1$. We denote by ${\mathcal {SE}}(X)$ the moduli space of stable extensions $0\to L_1\to E\to L_2\to 0$ of two arbitary line bundles $L_1$ and $L_2$ over $X$ modulo the process of tensoring line bundles. Observing that ${\mathcal {SE}}(X)$ has a natural stratification structure with respect to the positive index $k:=\deg\, E-2\deg\,L_1$, we denote the corresponding stratum by ${\mathcal {SE}}_k(X)$. We denote
by ${\mathcal {MI}}(X,\,{\Bbb Z})$ the space of irreducible metrics representing effective divisors on $X$.

\begin{thm}
\label{thm:corr}
Let $X$ be a compact connected Riemann surface of genus $g_X \geq 1$. Then there exists a canonical surjective map $\sigma \colon {\mathcal {SE}}(X)\to {\mathcal {MI}}(X,\,{\Bbb Z})$ such that
\begin{enumerate}

\item the irreducible metric corresponding to an equivalence class of a stable extension $0\to L_1\to E\to L_2\to 0$ represents an effective divisor lying  in the complete linear system $\left| L_1^{-2} \otimes \det E \otimes K_{X} \right|$, where two stable extensions are called {\rm equivalent} if and only if one of them is obtained from the other by tensoring a line bundle;

\item an irreducible metric representing an effective divisor has at most $2^{2g_{X}}$ preimages under $\sigma$; and

\item if $g_X \geq 2$ and the index $k >  10g_{X} - 5$, the restriction of $\sigma$ to some Zariski open subset of ${\mathcal {SE}}_k(X)$ is injective.
\end{enumerate}
\end{thm}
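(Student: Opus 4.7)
My plan is to construct $\sigma$ by combining the Narasimhan-Seshadri theorem with the developing-map description of cone spherical metrics recalled in Subsection \ref{sec:knownresult}. Starting from a stable extension $0\to L_1\to E\to L_2\to 0$, I equip $E$ with its Hermitian-Einstein metric $h$ (unique up to positive scalar), so that the Chern connection projects to a flat $\mathrm{PU}(2)$-connection on $\mathbb{P}(E)$ with holonomy $\rho\colon\pi_1(X)\to\mathrm{PSU}(2)$. Stability of $E$ rules out flat sub-line-bundles, forcing $\rho$ to be irreducible, so in particular not conjugate into $\mathrm{U}(1)$. Trivializing the flat projective bundle over the universal cover $\widetilde{X}$, the holomorphic section of $\mathbb{P}(E)$ induced by $L_1\hookrightarrow E$ becomes a $\rho$-equivariant meromorphic map $f\colon\widetilde{X}\to\mathbb{P}^1$, i.e.\ a developing map in the sense of Subsection \ref{sec:knownresult}; the pull-back $f^{\ast} g_{\mathrm{st}}$ descends to an irreducible cone spherical metric on $X$. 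Tensoring the extension by any line bundle $M$ leaves $\mathbb{P}(E)$ and $\mathbb{P}(L_1)\subset\mathbb{P}(E)$ unchanged, so $\sigma$ is well-defined on equivalence classes. For property (1), I identify the ramification divisor of $f$ with the zero divisor of the second fundamental form $A$ of $L_1\subset E$ with respect to $h$; the Hermitian-Einstein condition makes $A$ a holomorphic section of $K_X\otimes\mathrm{Hom}(L_1,L_2)\cong K_X\otimes L_1^{-2}\otimes\det E$, placing the divisor in $|L_1^{-2}\otimes\det E\otimes K_X|$ as claimed.

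For surjectivity and property (2), I would reverse the construction. Given an irreducible metric $g$ with developing map $f$ and monodromy $\rho\colon\pi_1(X)\to\mathrm{PSU}(2)$, pick a lift $\widetilde{\rho}\colon\pi_1(X)\to\mathrm{U}(2)$ (the obstruction is trivial once the determinant is allowed to vary). The flat bundle $E_{\widetilde{\rho}}$ is stable because $\rho$ is irreducible; the section of $\mathbb{P}(E_{\widetilde{\rho}})$ determined by $f$ lifts to a sub-line-bundle $L_1\subset E_{\widetilde{\rho}}$, and the resulting stable extension maps to $g$. To bound the fiber, I note that two lifts of $\rho$ to $\mathrm{U}(2)$ differ by a character $\chi\colon\pi_1(X)\to\mathrm{U}(1)$, and twisting by the corresponding flat line bundle $L_\chi$ is compatible with the equivalence on $\mathcal{SE}(X)$ only up to the residual ambiguity caused by the non-uniqueness of the splitting of $\mathrm{U}(2)\to\mathrm{PSU}(2)$. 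This residual ambiguity is a torsor under $H^1(X,\mathbb{Z}/2)\cong(\mathbb{Z}/2)^{2g_X}$, parametrizing the different lifts of $\rho$ through $\mathrm{SU}(2)\to\mathrm{PSU}(2)$ (equivalently, twists of $E_{\widetilde{\rho}}$ by 2-torsion line bundles), and therefore bounds the fiber of $\sigma$ by $2^{2g_X}$.

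The principal technical obstacle, and the most delicate step, is property (3). My plan is to combine the $2^{2g_X}$-bound from (2) with a Lange-Narasimhan type uniqueness for sub-line-bundles of prescribed degree in a generic stable bundle of Segre invariant $k$. Concretely, on a Zariski open subset $U\subset\mathcal{SE}_k(X)$, I would force the sub-line-bundle $L_1\subset E$ recovered from the developing map to be the unique candidate of degree $\tfrac{1}{2}(\deg E-k)$ in $E$, so that among the $2^{2g_X}$ twists $E\otimes\eta$ (with $\eta^{2}\cong\mathcal{O}_X$) exactly one can contain a sub-line-bundle of the correct degree fitting the stratum $\mathcal{SE}_k(X)$. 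The numerical threshold $k>10g_X-5$ should emerge from a sharp Riemann-Roch based dimension count for $\mathcal{SE}_k(X)$ via $\mathrm{Ext}^1(L_2,L_1)=H^1(X,L_1\otimes L_2^{-1})$, compared against the dimension of the target linear system $|L_1^{-1}\otimes L_2\otimes K_X|$ and the codimension of the loci in the moduli of stable bundles where a 2-torsion twist $E\otimes\eta$ would acquire an extraneous sub-line-bundle of the prescribed degree. Obtaining this precise bound, rather than a vague ``$k$ sufficiently large'', is where I anticipate the subtlest bookkeeping and what I expect to be the main difficulty of the proof.
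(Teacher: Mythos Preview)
Your construction of $\sigma$ and the argument for surjectivity are essentially the paper's approach, and your use of the second fundamental form for part (1) is correct (the paper proves this as Theorem \ref{thm:rami}, though in the proof of Theorem \ref{thm:corr} itself it invokes the more elementary Lemma \ref{lem:evendegree}). For part (2) your intuition about $2$-torsion twists is right, but the framing via ``lifts of $\rho$ through $\mathrm{SU}(2)\to\mathrm{PSU}(2)$'' is misleading: any two $\mathrm{U}(2)$-lifts of $\rho$ already give \emph{equivalent} extensions, since they differ by a character and tensoring by a line bundle is exactly the equivalence relation on $\mathcal{SE}(X)$. The actual source of the fiber is that the correspondence of Theorem \ref{thm:correspondence} identifies metrics with pairs $(P,s)$ \emph{modulo} $\operatorname{Aut}_X^u(P)$, so $\sigma^{-1}(g)$ is the $\operatorname{Aut}_X^u(\mathbb{P}(E))$-orbit of the section. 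The paper then invokes Grothendieck's exact sequence to identify $\operatorname{Aut}_X^h(\mathbb{P}(E))\cong\Xi=\{T:E\cong E\otimes T\}$, a subgroup of the $2$-torsion in $\operatorname{Pic}^0(X)$, whence the bound $2^{2g_X}$.

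Your plan for part (3) has a genuine gap. You propose to argue that in a generic stable $E$ the sub-line-bundle of the prescribed degree is unique, and that ``among the $2^{2g_X}$ twists $E\otimes\eta$ exactly one can contain a sub-line-bundle of the correct degree''. Both points misfire: for $k$ large the space $H^0(X,L^{-1}\otimes E)$ of degree-$\tfrac12(\deg E-k)$ embeddings has dimension $k+2-2g_X$, so there are many such sub-line-bundles; and twists $E\otimes\eta$ are already identified by the equivalence on $\mathcal{SE}(X)$, so the question is not whether $E\otimes\eta$ contains a sub-line-bundle but whether $E\cong E\otimes\eta$ as bundles. The paper's mechanism is entirely different: it shows (via Proposition \ref{prop:directimage}) that $E\cong E\otimes T$ forces $E\cong\pi_*L$ for the \'etale double cover $\pi$ associated to $T$, so the locus of such $E$ has dimension $\leq 2g_X-1$ and is therefore non-generic when $g_X\geq 2$; this yields $\operatorname{Aut}_X^u(\mathbb{P}(E))=\{e\}$ generically. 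The threshold $k>10g_X-5$ does not arise from a comparison with $\lvert L_1^{-1}\otimes L_2\otimes K_X\rvert$ as you suggest, but from Atiyah's explicit bound $N=-\tfrac{\deg E}{2}+5g_X-2$ guaranteeing that $L^{-1}\otimes E$ is ample in Atiyah's sense, which is what makes the forgetful map $\mathcal{SE}_k(X)\to$ (moduli of stable bundles) surjective and allows one to pull the generic triviality of $\Xi$ back to a Zariski open subset of $\mathcal{SE}_k(X)$.
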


\begin{rem}
\label{rem:Gro}
{\rm The proof of Theorem \ref{thm:corr} is given in the last latter part of Subsection \ref{subsec:pf}. There exists another correspondence between reducible metrics representing effective divisors and certain unstable and polystable extensions of two line bundles. Actually in Section \ref{sec:bundles} we encapsulate it and the surjective property of $\sigma \colon {\mathcal {SE}}(X)\to {\mathcal {MI}}(X,\,{\Bbb Z})$  in  Theorem \ref{thm:corr} into Theorem \ref{thm:correspondence}, and explain the exact meaning of the correspondence in the proof of Theorem \ref{thm:correspondence}, which occupies the former part of  Subsection \ref{subsec:pf}.
}\end{rem}

\subsection{Existence of stable extensions}
We see clearly that a stable extension $0\to L_1\to E\to L_2\to 0$ gives an embedding $L_1\to E$. On the other hand, we observe that if $L \to E$ is an embedding of a line bundle $L$ into a rank two stable bundle $E$ on $X$, called a {\it stable embedding},  then $L \to E$ gives a stable extension of $L^{-1} \otimes \det E$ by $L$, i.e.
\[ 0 \to L\rightarrow E \rightarrow L^{-1} \otimes \det E \to 0 \quad \text{with $E$ stable}. \]
Therefore, we may use both the embedding $L \to E$ and the preceding stable extension interchangeably in what follows.
Atiyah \cite{Atiyah57II} proved the fact that there exists no rank two stable vector bundle of even degree on  elliptic curves. Combining this fact and Theorem \ref{thm:corr}, we observe that {\it cone spherical metrics of even degree are all reducible on elliptic curves}. In order to find irreducible metrics of even degree,  we are naturally motivated to prove on a compact Riemann surface of positive genus  the following theorem, which is relevant to and more refined than the Lange conjecture \cite[p.452]{Lan83}, solved by Russo-Teixidor i Bigas \cite{RT99} and Ballico-Russo \cite{Ba2000, BR98}.
We also admit that this Lange-type theorem should be well known to experts. Actually we found a special case of it in \cite[Corollary 1.2]{LN83}.

\begin{thm}
\label{thm:stable}
Let $X$ be a compact Riemann surface of genus $g_{X}\geq 1$, and let $L_{1}, L_{2}$ be two line bundles of degrees $d_{1}, d_{2}$ respectively. Suppose that either one of the following conditions holds:
\begin{itemize}
\item $d_{2} - d_{1}$ is a positive odd integer and $g_{X}=1$;
\item $d_{2} - d_{1}$ is a positive integer and $g_{X} \geq 2$.
\end{itemize}
Then there always exists a stable extension $E$ of $L_{2}$ by $L_{1}$.
Moreover, the set of unstable extensions forms an affine subvariety of codimension $\geq g_{X}$ {\rm \big(}resp. $\geq (g_{X} - 1)${\rm \big)} in the extension space $\operatorname{Ext}_{X}^{1}(L_{2}, L_{1})$ if $d_{2} - d_{1}$ is odd (resp. even).
Furthermore, if $d_{2} - d_{1} = 1$, then each nontrivial extension of $L_2$ by $L_1$ is stable.
\end{thm}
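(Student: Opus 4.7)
The plan is to bound the dimension of the unstable locus $\mathcal{U} \subset \operatorname{Ext}^{1}_{X}(L_{2}, L_{1})$ as a union of linear subspaces parametrized by effective divisors on $X$. Set $k = d_{2} - d_{1} > 0$. Since $\deg(L_{1} \otimes L_{2}^{-1}) = -k < 0$ forces $H^{0}(X, L_{1} \otimes L_{2}^{-1}) = 0$, Riemann--Roch gives $\dim \operatorname{Ext}^{1}_{X}(L_{2}, L_{1}) = h^{1}(L_{1} \otimes L_{2}^{-1}) = g_{X} + k - 1$, and Serre duality identifies the dual space with $H^{0}(X, K_{X} \otimes L_{2} \otimes L_{1}^{-1})$, which provides the projective embedding of $X$ used in the authors' strategy. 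For an extension class $e$ with middle term $E_{e}$ (of determinant $L_{1} \otimes L_{2}$), instability amounts to the existence of a line subbundle $M \subset E_{e}$ with $2 \deg M \geq d_{1} + d_{2}$. Since $d_{1} < d_{2}$, the subbundle $L_{1}$ does not destabilize, so any destabilizing $M$ projects nontrivially to $L_{2}$, identifying it with $L_{2}(-D)$ for some effective divisor $D$ of degree $n \leq k/2$, and forcing the inclusion $L_{2}(-D) \hookrightarrow L_{2}$ to lift to $E_{e}$.

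For each effective divisor $D$ of degree $n$, introduce
\[
Z_{D} \;=\; \ker\!\Bigl( H^{1}(L_{1} \otimes L_{2}^{-1}) \longrightarrow H^{1}(L_{1} \otimes L_{2}^{-1}(D)) \Bigr),
\]
the linear subspace of extension classes for which $L_{2}(-D)$ lifts into $E_{e}$. The cohomology long exact sequence of $0 \to L_{1} \otimes L_{2}^{-1} \to L_{1} \otimes L_{2}^{-1}(D) \to L_{1} \otimes L_{2}^{-1}(D)|_{D} \to 0$, combined with $H^{0}(L_{1} \otimes L_{2}^{-1}) = 0$, gives $\dim Z_{D} = n - h^{0}(L_{1} \otimes L_{2}^{-1}(D)) \leq n$. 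The incidence variety over $\operatorname{Sym}^{n}(X)$ (of dimension $n$) with fibers $Z_{D}$ (of dimension $\leq n$) has total dimension at most $2n$, so
\[
\mathcal{U} \;=\; \bigcup_{n=0}^{\lfloor k/2 \rfloor} \; \bigcup_{D \in \operatorname{Sym}^{n}(X)} Z_{D}
\]
is a Zariski closed (affine) subvariety of $\operatorname{Ext}^{1}_{X}(L_{2}, L_{1})$ of dimension at most $2 \lfloor k/2 \rfloor$. Projectively, $\mathbb{P}\mathcal{U}$ is exactly the $\lfloor k/2 \rfloor$-th secant variety of $X$ inside $\mathbb{P} H^{0}(X, K_{X} \otimes L_{2} \otimes L_{1}^{-1})^{*}$, placing the argument in the classical Lange--Narasimhan secant-variety framework. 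Subtracting dimensions yields the claimed codimension bound
\[
\operatorname{codim} \mathcal{U} \;\geq\; (g_{X} + k - 1) - 2\lfloor k/2 \rfloor \;=\; \begin{cases} g_{X} & \text{if } k \text{ is odd}, \\ g_{X} - 1 & \text{if } k \text{ is even}. \end{cases}
\]

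This bound is at least one in both cases covered by the hypothesis ($g_{X} \geq 2$ with any positive $k$, and $g_{X} = 1$ with $k$ odd), so $\mathcal{U}$ is a proper closed subvariety and the stable locus is a nonempty dense Zariski open subset of $\operatorname{Ext}^{1}_{X}(L_{2}, L_{1})$. For the $k = 1$ refinement, only $n = 0$ is admissible, $D$ is the trivial divisor, and $Z_{0}$ consists of extensions for which the identity $L_{2} \to L_{2}$ lifts to a section $L_{2} \hookrightarrow E_{e}$, i.e., the split extensions; hence $Z_{0} = \{0\}$ and every nontrivial extension is stable. The step I expect to demand the most care is establishing $\dim Z_{D} \leq n$ uniformly as $D$ ranges over $\operatorname{Sym}^{n}(X)$, especially for non-reduced $D$: one must identify $L_{1} \otimes L_{2}^{-1}(D)|_{D}$ as a torsion sheaf of length exactly $n$ and run the long exact sequence honestly. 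A secondary technical point is confirming that the incidence construction genuinely yields an algebraic subvariety of the stated dimension, which ultimately reduces to flatness of the family $\{Z_{D}\}_{D}$ and properness of its projection to $\operatorname{Ext}^{1}_{X}(L_{2}, L_{1})$.
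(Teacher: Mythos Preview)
Your proposal is correct and follows essentially the same approach as the paper: both identify a destabilizing subbundle as $L_{2}(-D)$ for an effective divisor $D$ of degree $n\le\lfloor k/2\rfloor$, show that the extension class then lies in $\ker\bigl(H^{1}(L_{1}\otimes L_{2}^{-1})\to H^{1}(L_{1}\otimes L_{2}^{-1}(D))\bigr)$, and bound the dimension of the union of these kernels by $2\lfloor k/2\rfloor$. The only cosmetic difference is that the paper phrases this union projectively as the secant variety $\operatorname{Sec}_{\lfloor k/2\rfloor}(X)$ of the embedding $X\hookrightarrow\mathbb{P}\,H^{0}(K_{X}\otimes L_{2}\otimes L_{1}^{-1})^{*}$ (splitting off the small cases $k=1,2$ separately), whereas you run the same dimension count uniformly via the incidence variety over $\operatorname{Sym}^{n}(X)$---a packaging you yourself note is equivalent to the Lange--Narasimhan secant framework.
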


The proof of this Lange-type theorem is given in Section \ref{sec:langetype}.

\subsection{Effective divisors represented by irreducible metrics}
\label{subsec:rami}

In this subsection we specify the effective divisor represented by the irreducible metric corresponding to a stable embedding $L \to E$ in Theorem \ref{thm:corr} by introducing the so-called {\it ramification divisor map}, where we use the Hermitian-Einstein metric on $E$. As a consequence, we could characterize
all the effective divisors represented by irreducible metrics in a conceptual sense.

Let $E$ be a stable extension of $M := L^{-1} \otimes \det E$ by $L$. Since $E$ is stable,  we know that $\mathbb P(E)$ arises from an irreducible projective unitary representation of $\pi_{1}(X)$ \cite{NS1965}. Hence, for a given smooth K\" ahler form $\omega_X$ on $X$, there exists on $E$ a unique Hermitian-Einstein metric $h$ up to scaling such that its Chern connection $D_E$ satisfies
\[ D_{E} \circ D_{E} = \lambda \, {\rm Id}_E,\quad {\rm where}\quad \lambda=-\sqrt{-1} \,\big(\deg E\big) \,\omega_X. \]
Let us rewrite $D_E$ as
$D_{E} = \partial_{E} + \bar{\partial}_{E}$,
where $\bar{\partial}_{E}$ is the complex structure of $E$, $\mathcal{A}^{p,q}(E)$ is the sheaf of smooth $E$-valued $(p,q)$-forms and $\partial_{E}\colon \mathcal{A}^{0}(E) \rightarrow \mathcal{A}^{1,0}(E)$ is the $(1,0)$-part of $D_{E}$.
Looking at the stable extension $0\to L\to E\to M \to 0$ as an element in
 ${\rm Ext}_{X}^1(M,\,L)=H^1\big(X,\, L\otimes M^{-1}\big)$ \cite[Proposition 2]{Atiyah57I}, we denote it by
${\Bbb E}$.
Then we obtain the following commutative diagram
\[\xymatrix{
\mathcal A^{0}(L)\ar[r] ^-{i}\ar[drr]_-{\theta_{\Bbb E}} & \mathcal A^{0}(E)\ar[r] ^-{\partial_{E}} & E\otimes \mathcal{A}^{1,0}\ar[d]^-{p \otimes \operatorname{id}} \\
 & & M \otimes \mathcal{A}^{1,0}
}\]
where $p \otimes \operatorname{id}$ is induced by $E \rightarrow E/L \cong M$ and $\theta_{\Bbb E} := (p \otimes \operatorname{id}) \circ \partial_{E} \circ i$. In the former part of Section \ref{sec:ramidivmap}, we  prove the following theorem which establishes a connection between $\theta_{\Bbb E}$ and the effective divisor represented by the cone spherical metric defined by $L\to E$.
\begin{thm}
\label{thm:rami}
Suppose that $L, \, M$ are two line bundles on a compact Riemann surface $X$ of genus $g_{X} \geq 1$ and
${\Bbb E}\in H^1\big(X,\, L\otimes M^{-1}\big)$ is a stable extension of $M$ by $L$. Then we have
\begin{enumerate}
\item The map $\theta_{\Bbb E}$ is $\mathcal{O}_{X}$-linear, i.e. $\theta_{\Bbb E}$ lies in $\operatorname{Hom}_{\mathcal{O}_{X}} \big(L, M \otimes K_{X}\big) $.
\item Under the correspondence of Theorem \ref{thm:corr}, the effective divisor $D$ represented by the irreducible metric defined by ${\Bbb E}$ coincides with the divisor $\operatorname{Div} (\theta_{\Bbb E})$ associated to the holomorphic section
    $\theta_{\Bbb E}\in \operatorname{Hom}_{\mathcal{O}_{X}} \big(L, M \otimes K_{X}\big)\cong H^{0}\big(X, L^{-1} \otimes M \otimes K_{X}\big)$.
\end{enumerate}
\end{thm}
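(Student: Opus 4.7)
The plan is to handle the two assertions in turn: part (1) is essentially a Leibniz-plus-curvature computation, while part (2) amounts to matching the differential datum $\theta_{\mathbb{E}}$ with the derivative of the developing map coming from the construction of Theorem \ref{thm:corr}.

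For part (1), the first step is to verify $C^{\infty}$-linearity of $\theta_{\mathbb{E}}$ by a direct Leibniz computation: for a smooth function $f$ and a local smooth section $s$ of $L$,
\[
\theta_{\mathbb{E}}(fs) \;=\; (p\otimes \mathrm{id})\bigl(\partial f\otimes i(s) + f\cdot \partial_{E} i(s)\bigr) \;=\; f\,\theta_{\mathbb{E}}(s),
\]
because $p\circ i = 0$ kills the first summand. This already gives a smooth bundle map $L \to M\otimes K_{X}$. To upgrade this to an $\mathcal{O}_{X}$-linear (i.e. holomorphic) map, the second step is to fix a local \emph{holomorphic} section $s$ of $L$ and compute $\bar\partial(\theta_{\mathbb{E}}(s))$ on the target. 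Since the quotient projection $p\colon E\to M$ is a holomorphic bundle map it commutes with $\bar\partial$, and since $\bar\partial_{E} i(s)=0$, the curvature identity $D_{E}\circ D_{E} = \bar\partial_{E}\partial_{E} + \partial_{E}\bar\partial_{E}$ together with the Hermitian-Einstein hypothesis $D_{E}\circ D_{E}=\lambda\,\mathrm{Id}_{E}$ yields
\[
\bar\partial(\theta_{\mathbb{E}}(s)) \;=\; p\bigl(\bar\partial_{E}\partial_{E}\, i(s)\bigr) \;=\; p\bigl(\lambda\, i(s)\bigr) \;=\; \lambda\,p(i(s)) \;=\; 0,
\]
using $p\circ i = 0$ a second time. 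This establishes part (1).

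For part (2), the plan is to recall the construction behind Theorem \ref{thm:corr} and then identify its infinitesimal content with $\theta_{\mathbb{E}}$. Because $E$ is stable, the Hermitian-Einstein connection $D_{E}$ projectivizes to a flat connection on $\mathbb{P}(E)$ with monodromy in $\mathrm{PSU}(2)$; the holomorphic inclusion $L\hookrightarrow E$ gives a holomorphic section $\sigma_{L}\colon X\to \mathbb{P}(E)$, $x\mapsto [L_{x}]$; and the flat trivialization $\mathbb{P}(\pi^{*}E)\cong \widetilde X\times \mathbb{P}^{1}$ turns $\sigma_{L}$ into a holomorphic map $f\colon \widetilde X\to \mathbb{P}^{1}$ which is the developing map of the irreducible metric $f^{*}g_{\mathrm{st}}$, and the effective divisor it represents is $R(f)=\operatorname{div}(df)$. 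The key claim is that, under the natural identifications, $df$ coincides with $\theta_{\mathbb{E}}$. Indeed the flat trivialization identifies $T_{f(\tilde p)}\mathbb{P}^{1}$ with the vertical tangent space $T_{[L_{p}]}\mathbb{P}(E_{p})\cong \mathrm{Hom}(L_{p}, M_{p})$, and with this identification $df$ equals the covariant derivative of $\sigma_{L}$ with respect to the connection on $\mathbb{P}(E)$ induced by $D_{E}$. Unwinding the definitions, this covariant derivative applied to $\ell\in L_{p}$ is computed by extending $\ell$ to a local section of $L$, applying $\partial_{E}$, and projecting to $E/L = M$—which is exactly $\theta_{\mathbb{E}}(\ell)$. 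Consequently $\operatorname{Div}(\theta_{\mathbb{E}}) = \operatorname{div}(df) = R(f) = D$.

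\textbf{Main obstacle.} The delicate part will be making the identification $df = \theta_{\mathbb{E}}$ genuinely rigorous—reconciling the flat $\mathrm{PSU}(2)$-trivialization of $\mathbb{P}(E)$ that defines $f$ with the Chern-connection formula for $\theta_{\mathbb{E}}$. I would carry this out in a local holomorphic coordinate $z$ centered at a point $p_{0}\in X$: choose a local holomorphic frame $e_{1}$ of $L$ and a local smooth frame $e_{2}$ of $L^{\perp}$ lifting a local holomorphic frame of $M$, write the Chern-connection matrix in the smooth splitting $E = L\oplus L^{\perp}$, and observe that its $\operatorname{Hom}(L,M)$-component at $e_{1}$ is precisely the local expression of $\theta_{\mathbb{E}}(e_{1})$. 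Then in a $D_{E}$-parallel frame obtained near $p_{0}$, the projective coordinate representation of $\sigma_{L}$ acquires a leading-order contribution proportional to that local expression times a nowhere-vanishing factor, so the vanishing orders of $df$ and of $\theta_{\mathbb{E}}$ match pointwise and the two effective divisors agree.
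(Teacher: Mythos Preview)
Your argument for part (1) is essentially the paper's: the paper identifies $\theta_{\mathbb{E}}$ with the off-diagonal block $\beta^{*_\beta}$ of $D_E$ and reads off holomorphicity from the curvature block decomposition, while you phrase the same curvature computation directly in terms of $\bar\partial_E\partial_E$. Both hinge on the Hermitian--Einstein condition $D_E^2 = \lambda\,\mathrm{Id}_E$ together with $p\circ i = 0$.

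For part (2), your route genuinely differs from the paper's. The paper first proves the claim in the special case $\det E = \mathcal{O}_X$ (Lemma~\ref{lem:flatcase}) by choosing local $\mathrm{SU}(2)$-flat frames of $E$ and computing $\theta_{\mathbb{E}}$ explicitly as $w'_\alpha\,dz_\alpha$, where $w_\alpha$ is the local developing map; it then reduces the general even-degree case to this one by twisting $E$ by a square root of $\det E$, and the odd-degree case by pulling back along an \'etale double cover $\widehat{X}\to X$ (Lemma~\ref{lem:doublecover}) to force the degree to be even. You bypass both reductions by observing that the central curvature $\lambda\,\mathrm{Id}_E$ makes the \emph{projectivized} connection on $\mathbb{P}(E)$ flat regardless of $\deg E$, and then identifying $df$ with the second fundamental form $\theta_{\mathbb{E}}$ via the standard fact that the covariant derivative of the section $\sigma_L$ of $\mathbb{P}(E)$ equals the second fundamental form of $L\subset E$. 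Your approach is more conceptual and handles all degrees uniformly; the paper's approach is more hands-on, yielding an explicit local formula, at the cost of the twist and double-cover reductions.

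One caution about your ``Main obstacle'' paragraph: you speak of a ``$D_E$-parallel frame obtained near $p_0$'', but $D_E$ itself is \emph{not} flat when $\deg E\neq 0$, so no genuinely parallel local frame of $E$ exists. What you actually need is a local frame of $E$ in which the connection matrix of $D_E$ is \emph{scalar}---equivalently, a flat local frame for the projectivized connection on $\mathbb{P}(E)$. Such a frame exists precisely because the projectivized connection is flat, and in it the comparison of $df$ with $\theta_{\mathbb{E}}$ goes through exactly as you sketch. With that correction your plan is sound.
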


We denote by $H^{1}(X, L \otimes M^{-1})^{s}$ the set of all stable extensions in  $H^{1}(X, L \otimes M^{-1}) \cong \operatorname{Ext}_{X}^{1}(M, L)$, which forms a
Zariski open subset of $H^{1}(X, L \otimes M^{-1})$ by Theorem \ref{thm:stable}.
Since the map ${\Bbb E}\mapsto \theta_{\Bbb E}$ given in Theorem \ref{thm:rami} satisfies that
\[\mu{\Bbb E}\mapsto \frac{\theta_{\Bbb E}}{\mu}\quad{\rm where}\quad \mu\in {\Bbb C} \setminus \{0\},\]
we call
\begin{align*}
{\frak R}_{(L, M)}\colon \mathbb{P}\Big(H^{1}\big(X, L \otimes M^{-1}\big)^{s}\Big) &\rightarrow \mathbb{P}\Big(H^{0}\big(X, L^{-1} \otimes M \otimes K_{X}\big)\Big), \\
[{\Bbb E}] &\mapsto {\rm Div}(\theta_{\Bbb E})
\end{align*}
the {\it ramification divisor map} associated to the two line bundles $L$ and $M$.
Therefore, we could reduce the existence problem, the uniqueness problem and the counting problem of irreducible metrics representing an effective divisor $D$ to understanding the corresponding properties of the ramification divisor map associated to the two line bundles $L$ and $M$ such that ${\mathcal O}_{X}(D)\cong L^{-1}\otimes M\otimes K_X$.
We describe such maps in terms of differential geometry of vector bundles.
Let us consider a stable extension
\begin{equation}
\label{equ:extension}
{\Bbb E}:\quad 0 \to L \rightarrow E \rightarrow M \to 0
\end{equation}
in $H^{1}(X, L \otimes M^{-1})$. Let $h$ be the preceding Hermitian-Einstein metric on $E$. We denote by $L^{\perp}$ the orthogonal complement of $L$ in $E$. Then the quotient line bundle $M$ is isomorphic to $L^{\perp}$ as complex vector bundles, however they are not necessarily isomorphic as holomorphic vector bundles. Both $L$ and $M$ inherit hermitian structures from $E$ in the usual way. We denote by $D_{L}$ and $D_{M}$  the Chern connections of $L$ and $M$, respectively. The Chern connection $D_{E}$ of $E$ could be expressed by
\[ D_{E} = \begin{pmatrix}
D_{L} & -\beta \\
\beta^{*_{h}} & D_{M}
\end{pmatrix}, \]
where $\beta^{*_{h}}$ is the second fundamental form of $L$ in $E$. Then we have

\begin{thm}
\label{thm:expressionofR} Using the notations in the preceding paragraph, we have that
$\beta$ is a representative 1-cocycle of the stable extension \eqref{equ:extension} in $H^{0,1}_{\bar{\partial}}(X, L \otimes M^{-1}) \cong H^{1}(X, L \otimes M^{-1})$. By the canonical isomorphism between $H^{1,0}_{\bar{\partial}}(X, L^{-1} \otimes M)$ and $H^{0}(X, L^{-1} \otimes M \otimes K_{X})$, we could express the ramification divisor map as
\begin{equation}
\label{equ:rami}
{\frak R}_{(L, M)} \left( [\beta] \right) = [\beta^{*_{h}}]=:[\beta^{*_{\beta}}],
\end{equation}
where we define $[\beta^{*_{\beta}}]$ as $[\beta^{*_{h}}]$ since $h$ is determined by $\beta$.
Moreover, ${\frak R}_{(L, M)}$ is a real analytic map.

\end{thm}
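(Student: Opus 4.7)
The plan is to read off both assertions from the block form of $D_E = \partial_E + \bar\partial_E$ in the smooth orthogonal splitting $E = L \oplus L^{\perp}$ provided by the Hermitian-Einstein metric $h$, under which $L^\perp \cong M$ as $C^\infty$ line bundles. The key initial observation is a type calculation: since $L$ is a holomorphic subbundle, $\bar\partial_E$ preserves $L$, so the lower-left entry of $\bar\partial_E$ vanishes; metric compatibility of $D_E$ with $h$ then forces the upper-right entry of $D_E$ to equal minus the $h$-adjoint of the lower-left entry. As taking $h$-adjoint exchanges $(1,0)$- and $(0,1)$-types, $\beta^{*_h}$ must be purely of type $(1,0)$ and $\beta$ purely of type $(0,1)$, yielding
\[ \bar\partial_E = \begin{pmatrix} \bar\partial_L & -\beta \\ 0 & \bar\partial_M \end{pmatrix}, \qquad \partial_E = \begin{pmatrix} \partial_L & 0 \\ \beta^{*_h} & \partial_M \end{pmatrix}. \]

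With this in hand, $\bar\partial_E^{2}=0$ collapses to $\bar\partial \beta = 0$, so $\beta$ defines a class in $H^{0,1}_{\bar\partial}(X, L \otimes M^{-1})$; this class represents $\mathbb E$ (up to sign) under the canonical isomorphism $H^{0,1}_{\bar\partial}(X, L \otimes M^{-1}) \cong H^{1}(X, L \otimes M^{-1}) \cong \operatorname{Ext}_{X}^{1}(M, L)$, by the standard Dolbeault description of an extension from the $C^{\infty}$ splitting $L^\perp \hookrightarrow E$. For the second assertion, applying $\partial_E$ to a section $s\in L$ yields $\partial_E(s) = \bigl(\partial_L s,\, \beta^{*_h}(s)\bigr)$; projecting to $M \cong L^\perp$ gives $\beta^{*_h}(s)$, a section of $M \otimes K_X$. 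By the definition $\theta_{\mathbb E} = (p\otimes \operatorname{id}) \circ \partial_E \circ i$ from Theorem \ref{thm:rami}, this identifies $\theta_{\mathbb E} = \beta^{*_h}$ under $H^{1,0}_{\bar\partial}(X, L^{-1} \otimes M) \cong H^{0}(X, L^{-1} \otimes M \otimes K_X)$, and Theorem \ref{thm:rami}(2) then delivers \eqref{equ:rami}.

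For the real analyticity of $\mathfrak R_{(L,M)}$, I would factor the map as $[\beta] \mapsto (\beta, h_\beta) \mapsto [\beta^{*_{h_\beta}}]$, where $h_\beta$ is the Hermitian-Einstein metric on $E_\beta$ (normalized to remove the positive-scalar ambiguity, which in any case leaves $\beta^{*_h}$ invariant). The second arrow is polynomial in $h_\beta$ and $h_\beta^{-1}$ and conjugate-linear in $\beta$, so it is real analytic in $(\beta, h_\beta)$. Since $\beta \mapsto \bar\partial_{E_\beta}$ is linear, the crux is the real analytic dependence $\beta \mapsto h_\beta$, which I would obtain by applying the analytic implicit function theorem on suitable Sobolev completions to the Hermitian-Einstein equation $\sqrt{-1}\,\Lambda_{\omega_X} F_{h} = \lambda\, \operatorname{Id}_E$: both sides are real analytic in $(\beta, h)$, and at a stable $E_\beta$ the linearization in $h$ is a Fredholm self-adjoint elliptic operator whose kernel reduces to the projectivized scaling direction.

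The main obstacle is this last step: the cocycle identification and the equality $\theta_{\mathbb E} = \beta^{*_h}$ are routine linear algebra with the Hermitian decomposition, whereas the real analytic dependence $\beta \mapsto h_\beta$ refines the standard $C^\infty$ existence theorem of Uhlenbeck-Yau/Donaldson and must be argued via the analytic implicit function theorem as above or by invoking a real analytic version of the Kobayashi-Hitchin correspondence specialized to compact Riemann surfaces.
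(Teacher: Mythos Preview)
Your proposal is correct and follows essentially the same route as the paper: the block decomposition of $D_E=\partial_E+\bar\partial_E$ in the $h$-orthogonal splitting gives the identification of $\beta$ with the Dolbeault representative of the extension class and the equality $\theta_{\mathbb E}=\beta^{*_h}$, and real analyticity is then reduced to real analytic dependence of the Hermitian--Einstein metric on $\beta$. The paper's proof is terser---it simply asserts that ``the Kobayashi--Hitchin correspondence is real analytic'' where you spell out the implicit function theorem argument, and it also records the scaling $\Phi(\mu\beta)=\beta^{*_\beta}/\mu$ to confirm the map descends to projective space---but the substance is the same.
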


The proof of Theorem \ref{thm:expressionofR} is given in the middle part of Section \ref{sec:ramidivmap}.

\begin{cor}
\label{cor:image}
Under the assumptions of Theorem \ref{thm:rami}, a divisor $D$ in the complete linear system $\left| L^{-1} \otimes M \otimes K_{X} \right|$ can be represented by an irreducible metric if and only if $D$ lies in the image  $\operatorname{Im} \big({\frak R}_{(L, M)}\big)$ of ${\frak R}_{(L, M)}$. Moreover,
 $\operatorname{Im} \big({\frak R}_{(L, M)}\big)$ is an arcwise connected Borel subset of Hausdorff dimension $\geq 2\big(\deg\,M-\deg\, L+1-2g_X\big)$ in $\mathbb{P}\Big(H^{0}\big(X, L^{-1} \otimes M \otimes K_{X}\big)\Big)$ if $(\deg\, M-\deg\, L) > 10g_{X} - 5$.
\end{cor}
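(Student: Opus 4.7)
The ``iff'' half is formal. For $\Leftarrow$, Theorem~\ref{thm:rami}(2) identifies $\mathfrak{R}_{(L,M)}([\mathbb{E}]) = \mathrm{Div}(\theta_{\mathbb{E}})$ with the divisor represented by the irreducible metric $\sigma([\mathbb{E}])$. For $\Rightarrow$, an irreducible metric representing $D\in|L^{-1}\otimes M\otimes K_X|$ lifts under the surjection $\sigma$ of Theorem~\ref{thm:corr} to some stable extension $0\to L_1\to E\to L_2\to 0$ with divisor in $|L_1^{-1}\otimes L_2\otimes K_X|$; equality of the two linear systems forces $L_1^{-1}\otimes L_2\cong L^{-1}\otimes M$, and tensoring by $L\otimes L_1^{-1}$ produces an equivalent stable extension of $M$ by $L$ whose image under $\mathfrak{R}_{(L,M)}$ is $D$ by Theorem~\ref{thm:rami}.

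For the topological properties, $\mathbb{P}(H^1(X,L\otimes M^{-1})^s)$ is a non-empty (Theorem~\ref{thm:stable}) Zariski open subset of a complex projective space, hence an irreducible complex variety, path-connected and $\sigma$-compact in its analytic topology. Since $\mathfrak{R}_{(L,M)}$ is real analytic (Theorem~\ref{thm:expressionofR}), its image is path-connected and $\sigma$-compact, thus $F_{\sigma}$ and in particular Borel.

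For the Hausdorff-dimension lower bound I would argue globally on $\mathcal{SE}_k$, with $k := \deg M - \deg L$. By Theorem~\ref{thm:corr}(3), $\sigma$ is injective on some Zariski open $U\subseteq \mathcal{SE}_k$, so $\sigma(U)\subset \mathcal{MI}(X,\mathbb{Z})$ is a real analytic subset of real dimension $2\dim_{\mathbb{C}}\mathcal{SE}_k = 2(k+2g_X-2)$. Postcomposing with the ``take divisor'' map $\Delta\colon \mathcal{MI}(X,\mathbb{Z})\to \mathrm{Sym}^d(X)$ yields $\tilde{\mathfrak{R}}\colon \mathcal{SE}_k\to \mathrm{Sym}^d(X)$, and the fiber of the natural projection $\tilde{\mathfrak{R}}(\mathcal{SE}_k)\to \mathrm{Pic}^d(X)$ above $\mathcal{L} := L^{-1}\otimes M\otimes K_X$ is exactly $\mathrm{Im}(\mathfrak{R}_{(L,M)})$, while $\mathcal{SE}_k\to \mathrm{Pic}^d(X)$ itself is surjective by Theorem~\ref{thm:stable}. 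The pivotal estimate is that each fiber of $\Delta$ restricted to irreducible metrics has real dimension at most $6g_X-6$, i.e.\ the dimension of the $\mathrm{PSU}(2)$ character variety of $\pi_1(X)$ (integer cone angles force the monodromy to factor through $\pi_1(X)$). Granting this, $\dim_{\mathbb{R}}\tilde{\mathfrak{R}}(U)\geq 2(k+2g_X-2)-(6g_X-6) = 2(k-g_X+1)$, and upper semi-continuity of fiber dimension for the projection to $\mathrm{Pic}^d(X)$ (real dim $2g_X$) yields $\dim_{\mathbb{R}}\mathrm{Im}(\mathfrak{R}_{(L,M)})\geq 2(k+1-2g_X)$.

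The hard part will be the rigorous justification of the $6g_X-6$ bound on fibers of $\Delta$. The natural approach is via the Schwarzian derivative of developing maps: at each cone point with integer cone angle, the principal part of the Schwarzian is prescribed by the coefficient of the divisor, while the holomorphic part lies in $H^0(X,2K_X)$ of complex dimension $3g_X-3$ (for $g_X\geq 2$); for $g_X=1$ the irreducible locus reduces to isolated Klein-four-type representations, consistent with $6g_X-6 = 0$.
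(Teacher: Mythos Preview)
Your treatment of the ``iff'' statement and the arcwise-connected Borel property matches the paper's; both are essentially formal consequences of Theorems~\ref{thm:rami}, \ref{thm:stable}, and~\ref{thm:expressionofR}.

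For the Hausdorff-dimension bound your route diverges from the paper's and contains a genuine gap. The step invoking ``upper semi-continuity of fiber dimension'' for the projection $\tilde{\mathfrak{R}}(\mathcal{SE}_k)\to\mathrm{Pic}^d(X)$ does not do what you need: upper semi-continuity (when it holds) says special fibers can only be \emph{larger} than generic ones, but $\tilde{\mathfrak{R}}(\mathcal{SE}_k)$ is merely the image of a real analytic map, not a variety, so the principle need not apply at all---and even if it did, it would not lower-bound the \emph{specific} fiber $\mathrm{Im}(\mathfrak{R}_{(L,M)})$. The detour through $\tilde{\mathfrak{R}}$ is in fact unnecessary: applying your fiber-dimension argument directly to $\mathfrak{R}_{(L,M)}$ (domain of real dimension $2(k+g_X-2)$, fibers of real dimension $\le 6g_X-6$ by your Schwarzian bound, since every irreducible metric representing $D$ arises from an extension of $M$ by $L$) yields $2(k+1-2g_X)$ immediately. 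So the strategy can be repaired, but it then rests on the Schwarzian bound you flag as ``the hard part,'' which needs a careful argument (injectivity of the Schwarzian on irreducible metrics, and the correct affine-space target).

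The paper avoids this entirely. Its proof is a direct rank computation: it considers the forgetful map $\mathcal{F}\colon H^1(X,L\otimes M^{-1})^s\to\mathcal{M}_X(2,\xi)$ sending an extension to its middle term, and observes that along a fiber $\mathcal{F}^{-1}(E_0)$ the Hermitian--Einstein metric $h_0$ is \emph{fixed}, so $\Phi(\beta)=\beta^{*_{h_0}}$ is conjugate-linear on that fiber. Since the fiber has complex dimension $k+2-2g_X$ (for $k>10g_X-5$), the tangent map of $\Phi$ restricted to it is a non-degenerate anti-linear map, giving real rank $\ge 2(k+2-2g_X)$, hence rank $\ge 2(k+1-2g_X)$ after projectivizing. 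This immediately gives the Hausdorff-dimension bound at one point, with no appeal to Schwarzians, character varieties, or global fiber-dimension estimates.
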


By the definition of ${\frak R}_{(L, M)}$, the first statement of this corollary holds automatically.  The last statement of it is non-trivial and will be proved in the ending of
Section 5 by using Theorem \ref{thm:expressionofR}.
As an application,  Theorems \ref{thm:corr} and \ref{thm:stable} and the above corollary give a new class of irreducible metrics of even degree.

\begin{cor}
\label{cor:irr}
Let $D$ be an effective divisor of degree ${\rm even}=d>2g_X-2$ on a compact Riemann surface $X$ of genus $g_X\geq 2$.
\begin{enumerate}

\item There exists an irreducible metric on $X$ representing some effective divisor linearly equivalent to $D$.

\item If $d> 12g_{X} - 7$, then the effective divisors as above form an arcwise connected Borel subset of Hausdorff dimension $\geq 2(d+3-4g_X)$ in $|D|$, and all the effective divisors of degree $d$ represented by irreducible metrics form an arcwise connected Borel subset of Hausdorff dimension $\geq 2(d+3-3g_X)$ in ${\rm Sym}^d(X)$.
\end{enumerate}
\end{cor}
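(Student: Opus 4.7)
The plan is to reduce both parts to Corollary \ref{cor:image}: directly for Part (1) and the $|D|$-statement of Part (2), and fiberwise along the Abel--Jacobi fibration $a\colon{\rm Sym}^{d}(X)\to{\rm Pic}^{d}(X)$ for the ${\rm Sym}^d(X)$-statement. Given $D$ of even degree $d>2g_X-2$, fix any line bundle $L$ on $X$ and set $M:=L\otimes K_X^{-1}\otimes\mathcal{O}_X(D)$, so that $L^{-1}\otimes M\otimes K_X\cong\mathcal{O}_X(D)$. Then $\deg M-\deg L=d-2g_X+2$ is positive (since $d>2g_X-2$) and even (since $d$ and $2g_X-2$ both are), whence Theorem \ref{thm:stable} in the $g_X\geq 2$ regime yields a nonempty Zariski open stable locus $H^{1}(X,L\otimes M^{-1})^{s}\subset H^{1}(X,L\otimes M^{-1})$.

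By Theorems \ref{thm:corr} and \ref{thm:rami} every class in $\mathbb{P}(H^{1}(X,L\otimes M^{-1})^{s})$ produces an irreducible metric representing a divisor in $|L^{-1}\otimes M\otimes K_X|=|D|$; this already settles Part (1). The numerical hypothesis $d>12g_X-7$ is exactly $\deg M-\deg L>10g_X-5$, so Corollary \ref{cor:image} identifies the divisors in $|D|$ represented by irreducible metrics with $\operatorname{Im}({\frak R}_{(L,M)})$, an arcwise connected Borel subset of Hausdorff dimension at least $2(\deg M-\deg L+1-2g_X)=2(d+3-4g_X)$ in $|D|$; this is the first assertion of Part (2).

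For the ${\rm Sym}^d(X)$-statement, the same construction with $\mathcal{O}_X(D)$ replaced by an arbitrary $\mathcal{L}\in{\rm Pic}^d(X)$ produces a Borel subset $\mathcal{I}_{\mathcal{L}}\subset|\mathcal{L}|$ of Hausdorff dimension at least $2(d+3-4g_X)$ consisting precisely of the divisors in $|\mathcal{L}|$ represented by irreducible metrics. The total set $\mathcal{I}:=\bigcup_{\mathcal{L}\in{\rm Pic}^d(X)}\mathcal{I}_{\mathcal{L}}\subset{\rm Sym}^d(X)$ is then the image of the real-analytic map ${\mathcal{SE}}_k(X)\to{\rm Sym}^d(X)$, with $k=d-2g_X+2$, obtained by composing $\sigma$ of Theorem \ref{thm:corr} with the divisor assignment of Theorem \ref{thm:expressionofR}. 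Its arcwise connectedness follows because ${\mathcal{SE}}_k(X)$ fibers over the complex torus ${\rm Pic}^k(X)$ with arcwise connected (Zariski open) fibers, and the composed map is continuous.

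The main obstacle is the Hausdorff dimension lower bound $\geq 2(d+3-3g_X)$ for $\mathcal{I}$. Since $d>2g_X-2$, the Abel--Jacobi map $a$ is a smooth $\mathbb{P}^{d-g_X}$-bundle, so ${\rm Sym}^d(X)$ is locally a metric product $U\times\mathbb{P}^{d-g_X}$ with $U\subset{\rm Pic}^d(X)$ open; in any such trivialization $\mathcal{I}$ restricts to a Borel subset whose slice over each $u\in U$ has Hausdorff dimension at least $2(d+3-4g_X)$. A Fubini--Marstrand-type slicing inequality applied to this family over the $2g_X$-dimensional base $U$ then yields the required $2g_X+2(d+3-4g_X)=2(d+3-3g_X)$. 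This slicing step is the only place where a genuinely new technical point enters; everything else is direct bookkeeping on top of Theorems \ref{thm:corr}, \ref{thm:stable}, \ref{thm:expressionofR} and Corollary \ref{cor:image}.
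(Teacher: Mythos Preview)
Your proof is correct and follows essentially the same route as the paper: invoke Theorem~\ref{thm:stable} to produce a stable extension with $L^{-1}\otimes M\otimes K_X\cong\mathcal{O}_X(D)$, apply Theorem~\ref{thm:corr} and Corollary~\ref{cor:image} for the fiberwise statements, and then pass through the Abel--Jacobi fibration $\operatorname{Sym}^d(X)\to\operatorname{Pic}^d(X)$ for the global one. The paper makes the specific choice $M=L^{-1}$ (a square root of $K_X^{-1}\otimes\mathcal{O}_X(D)$, available since $d$ is even) and is terser about the $\operatorname{Sym}^d(X)$ step, simply citing the fibration; your more general choice of $(L,M)$ and your explicit Fubini-type slicing argument are equally valid, and indeed the slices $\mathcal{I}_{\mathcal{L}}$ have \emph{positive} $\mathcal{H}^{2(d+3-4g_X)}$-measure (not merely that dimension) because they contain real-analytic submanifold germs by the rank argument behind Corollary~\ref{cor:image}, which is exactly what Eilenberg's inequality needs.
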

\begin{proof} By the very condition of $D$, we could choose a negative line bundle $L$ such that $L^{-2} \otimes K_{X} = \mathcal{O}_{X}(D)$. Then, by Theorem \ref{thm:stable}, we could choose a stable extension $E$ of $L^{-1}$ by $L$. By Theorem \ref{thm:corr}, the embedding $L \to E$ defines an irreducible metric representing some divisor in the complete linear system $|D|$.  By using the ramification divisor map ${\frak R}_{(L,L^{-1})}$, we obtain the second statement from Corollary \ref{cor:image} and the natural holomorphic fibration
${\Bbb P}^{d-g_X}\to \operatorname{Sym}^{d}(X)\to {\rm Pic}^{d}(X)$
as $d > 12g_{X} - 7$.
\end{proof}


\begin{cor}
\label{cor:finitemetric}
Let $X$ be a compact Riemann surface of genus $g_X>0$ and $d > 2g_X-2$ an odd integer.
For almost every (a.e.) effective divisor $D$ in $\operatorname{Sym}^{d}(X)$,
 there exist finitely many cone spherical metrics representing $D$, where ``a.e.'' is in the sense of the Riemann-Lebesgue measure on the $d$-fold symmetric product ${\rm Sym}^{d}(X)$ of $X$.
\end{cor}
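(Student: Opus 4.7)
The plan is to reduce the count of cone spherical metrics representing an effective divisor $D$ of odd degree $d$ to a generic finiteness statement for a real-analytic map between real-analytic manifolds of the same dimension, and then to apply Sard's theorem. Since $d$ is odd and $d > 2g_X - 2$, every cone spherical metric representing $D$ is necessarily irreducible, because reducible metrics always represent divisors of even degree (by the lifting property recalled in Subsection~\ref{sec:knownresult}). Theorem~\ref{thm:corr} shows that $\sigma\colon \mathcal{SE}(X)\to\mathcal{MI}(X,\mathbb{Z})$ has fibers of cardinality at most $2^{2g_X}$; together with Theorems~\ref{thm:rami} and \ref{thm:expressionofR} this yields, on the stratum $\mathcal{SE}_k(X)$ with $k = d - 2g_X + 2$, a real-analytic map $\Phi\colon \mathcal{SE}_k(X)\to\operatorname{Sym}^d(X)$ sending a stable extension class to the divisor represented by its associated irreducible metric. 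Consequently it suffices to show that $\Phi^{-1}(D)$ is finite for almost every $D\in\operatorname{Sym}^d(X)$.

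Next, normalizing $L_1=\mathcal{O}_X$ within each equivalence class, $\mathcal{SE}_k(X)$ is a Zariski open subset of the projective bundle $\mathbb{P}(\mathcal{V})\to\operatorname{Pic}^k(X)$ whose fiber over $L_2$ is $\mathbb{P}\bigl(\operatorname{Ext}^{1}(L_2,\mathcal{O}_X)\bigr)$. A Riemann--Roch and Serre-duality computation (which is valid since $d>2g_X-2$ makes $L_2\otimes K_X$ nonspecial) gives $\dim_{\mathbb{C}}\mathcal{SE}_k(X) = g_X + (d-g_X) = d = \dim_{\mathbb{C}}\operatorname{Sym}^d(X)$. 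By the PDE-based existence result for odd-degree divisors of Bartolucci--De Marchis--Malchiodi and Troyanov (recalled in Subsection~\ref{sec:knownresult}), $\Phi$ is surjective, so its differential attains maximal rank on a nonempty open subset of $\mathcal{SE}_k(X)$. Applying Sard's theorem for real-analytic maps to $\Phi$, the critical values form a Lebesgue null set in $\operatorname{Sym}^d(X)$, and for each regular value $D$ the fiber $\Phi^{-1}(D)$ is a discrete, zero-dimensional real-analytic subvariety of $\mathcal{SE}_k(X)$.

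The main obstacle is upgrading this discreteness to actual finiteness, since $\mathcal{SE}_k(X)$ is noncompact. I would address it by combining a Fubini argument along the Abel--Jacobi fibration $\operatorname{Sym}^d(X)\to\operatorname{Pic}^d(X)$ with the codimension estimate from Theorem~\ref{thm:stable}: since $k = d - 2g_X + 2$ is odd, the unstable locus $\mathbb{P}(\mathcal{V})\setminus\mathcal{SE}_k(X)$ has complex codimension at least $g_X\geq 1$ in the compact total space $\mathbb{P}(\mathcal{V})$. Any accumulation point of an infinite discrete fiber $\Phi^{-1}(D)$ must lie in this unstable locus, and a subanalytic-geometry analysis of the cluster set of $\Phi$ along it produces a set of \emph{bad} $D$'s of real dimension strictly less than $2d$ --- hence of Lebesgue measure zero in $\operatorname{Sym}^d(X)$. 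Taking the union with the Sard critical-value set, we obtain a full-measure subset of $\operatorname{Sym}^d(X)$ for which $\Phi^{-1}(D)$ is simultaneously discrete and relatively compact in $\mathcal{SE}_k(X)$, and therefore finite, completing the proof.
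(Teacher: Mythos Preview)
Your overall strategy matches the paper's: reduce to irreducible metrics via the odd-degree observation, invoke the correspondence of Theorem~\ref{thm:corr}, realize the problem as a real-analytic map between equal-dimensional spaces, and apply Sard's theorem (together with Fubini along the Abel--Jacobi fibration) to get discreteness of the fiber over a generic $D$. The dimension count is correct, and the use of the Troyanov / Bartolucci--De~Marchis--Malchiodi existence results to force surjectivity is exactly what the paper does.

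The gap is in your upgrade from \emph{discrete} to \emph{finite}. Your argument hinges on the claim that a ``subanalytic-geometry analysis of the cluster set of $\Phi$'' along the unstable locus yields a set of bad $D$'s of real dimension $<2d$. But $\Phi$ is built from the Hermitian--Einstein metric on the middle term $E$, and this metric \emph{does not extend} across the unstable locus: as a sequence of stable extensions degenerates to an unstable one, the Hermitian--Einstein metrics blow up, and there is no a~priori control on the limiting behaviour of $\beta^{*_\beta}$ (hence of the ramification divisor). The graph of $\Phi$ is therefore not known to be subanalytic in $\mathbb{P}(\mathcal{V})\times\operatorname{Sym}^d(X)$, and even if it were, you would still need to show that the fiberwise closure over the unstable boundary projects to something of dimension $<2d$ in $\operatorname{Sym}^d(X)$; the codimension estimate from Theorem~\ref{thm:stable} bounds only the \emph{domain} side, not the cluster set on the target side. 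As written, this step is a hope rather than a proof.

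The paper sidesteps this difficulty entirely by appealing to an external analytic input: the space of cone spherical metrics representing a fixed divisor $D$ is \emph{compact} \cite[Theorem, p.~6]{BT02}, \cite[Theorem~1.16]{MP1807}. Thus once Sard gives that the preimage is a discrete metric space, compactness forces it to be finite, with no need to analyze the behaviour of $\Phi$ near the boundary. (The paper also isolates the case $d=2g_X-1$, where $k=1$ and Theorem~\ref{thm:stable} makes \emph{every} nontrivial extension stable, so the domain of $\mathfrak{R}_{(\mathcal{O}_X,M)}$ is already the compact $\mathbb{P}^{g_X-1}$ and no extra compactness input is needed.) If you want to avoid citing those PDE compactness results, you would need a genuine bubbling analysis for the ramification divisor map at unstable extensions, which is substantially harder than what you have sketched.
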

\begin{proof}
Let $g$ be a cone spherical metric representing $D$ on $X$. By Corollary \ref{cor:criterion}, $g$ is irreducible since $\deg D$ is odd. By the surjectivity in Theorem \ref{thm:corr}, we could assume that $g$ is defined by an embedding $\mathcal{O}_{X} \to E$. Moreover, we could see that
\[ \det E \otimes K_{X} = \mathcal{O}_{X}(D), \]
and $E$ is a stable extension of $M = K_{X}^{-1} \otimes \mathcal{O}_{X}(D)$ by $\mathcal{O}_{X}$.

We prefer to prove the case $d=2g_X-1$ at first since its proof is simpler than the general case.
Since $d=2g_X-1$,  $\deg M = 1$ and the domain of ${\frak R}_{(\mathcal{O}_{X}, M)}$ turns out to be the whole of $\mathbb{P}\Big(H^{1}(X, M^{-1})\Big) \cong \mathbb{P}^{(g_{X} - 1)}$ by Theorem \ref{thm:stable}. By Theorem \ref{thm:expressionofR}, we know that ${\frak R}_{(\mathcal{O}_{X}, M)} \colon \mathbb{P}^{(g_{X} - 1)} \to \mathbb{P}^{(g_{X} - 1)} \cong \mathbb{P}\big(H^{0}(X, M \otimes K_{X}) \big)$ is a real analytic map, which is surjective by using a theorem of Troyanov \cite[Theorem C]{Troyanov91}. If $D \in \mathbb{P}^{(g_{X} - 1)}$ is a regular value of ${\frak R}_{(\mathcal{O}_{X}, M)}$, then ${\frak R}_{(\mathcal{O}_{X}, M)}^{-1}(D)$ is a finite set. By Theorem \ref{thm:corr} again, there exist at most finitely many irreducible  metrics representing $D$ on $X$. We complete the proof by the natural fibration expression
\[{\Bbb P}^{(g_X-1)}\to \operatorname{Sym}^{(2g_{X} - 1)}(X)\to {\rm Pic}^{(2g_X-1)}(X)\]
of $\operatorname{Sym}^{(2g_{X} - 1)}(X)$, the Sard theorem and the Fubini theorem.

Suppose that $d > 2g_X - 1$. By Theorem \ref{thm:expressionofR},  ${\frak R}_{(\mathcal{O}_{X}, M)}$ could be thought of as a real analytic map from a Zariski open subset of ${\Bbb P}^{d-g_X}$ to ${\Bbb P}^{d-g_X}$, which is also surjective by using a theorem of Bartolucci-De Marchis-Malchiodi \cite[Theorem 1.1]{BdMM11}. It follows from the similar argument as the preceding case that for a regular value $D$ of ${\frak R}_{(\mathcal{O}_{X}, M)}$, there exist at most countably many irreducible metrics representing $D$ on $X$, which form a discrete metric space. The finiteness of such metrics follows from that
 the space of cone spherical metrics representing $D$ is compact \cite[Theorem, p.6]{BT02} and \cite[Theorem 1.16]{MP1807}.
\end{proof}

\begin{rem}
\label{rem:ell}
{\rm It is well known that on each elliptic curve, there exists a unique spherical metric with a cone angle of $4\pi$ \cite[Section 2]{CLW14}, which also follows from the similar argument in the second paragraph of the proof for Corollary \ref{cor:finitemetric}. Moreover, Chai-Lin-Wang \cite{CLWpre} gave a uniform upper bound in terms of $\deg\, D$ for the number of spherical metrics representing an effective divisor $D$ on elliptic curves, provided $\deg\,D$ is odd.}  
\end{rem}

\section{Cone spherical metrics and indigenous bundles}
\label{sec:bundles}
We observed in Section \ref{main} that cone spherical metrics representing effective divisors  are equivalent to projective functions with projective unitary monodromy on compact Riemann surfaces, which naturally give {\it branched projective coverings} and {\it indigenous bundles} (see their definitions in Subsection \ref{subsec:branch}) on the Riemann surfaces. Moreover, such indigenous bundles are the associated projective bundles of rank two polystable vector bundles by the projective unitary monodromy property. In this way, we could establish a more general correspondence in Theorem \ref{thm:correspondence} than the one in Theorem \ref{thm:corr} in the sense that the former could handle both irreducible and reducible metrics.
We state Theorem \ref{thm:correspondence} in Subsection \ref{subsec:state}, prepare the notions and a lemma for it in Subsection \ref{subsec:branch}, and prove it in Subsection \ref{subsec:pf}.

\subsection{The correspondence between cone spherical metrics and projective bundles}
\label{subsec:state}

We need to prepare the notion of projective unitary flat $\mathbb P^{r}$-bundles on Riemann surfaces before stating the above-mentioned correspondence. Let $P$ be a holomorphic $\mathbb P^{r}$-bundle on a compact Riemann surface $X$. We call that $P$ {\it has a projective unitary flat trivializations} if there exists a collection of trivializations $\psi_{\alpha}\colon P|_{U_{\alpha}} \rightarrow U_{\alpha} \times \mathbb{P}^r$ such that the corresponding transition functions
\[ g_{\alpha\beta} \colon U_{\alpha} \cap U_{\beta} \rightarrow {\rm PSU}(r+1) \subseteq  {\rm PSL}(r+1, \mathbb C) \]
are constant, where
\[ \psi_{\alpha} \circ \psi_{\beta}^{-1}(x, v) = \big(x, g_{\alpha\beta}(x) \cdot v \big) \quad \forall x \in U_{\alpha} \cap U_{\beta}, v\in \mathbb{P}^{r}. \]
Two such trivializations, after refinement of the covering if necessary, $\{U_{\alpha}, \psi_{\alpha}\}$ and $\{U_{\alpha}, \tilde{\psi}_{\alpha} \}$ are called equivalent if there exists a collection of maps $\varphi_{\alpha}\colon U_{\alpha} \times \mathbb{P}^r \rightarrow U_{\alpha} \times \mathbb{P}^r$ such that
\begin{itemize}
  \item $\varphi_{\alpha}(x, v)=\big( x, g_{\alpha}(x) \cdot v \big)$, where $g_{\alpha}\colon U_{\alpha} \rightarrow {\rm PSU}(r+1)$ is constant.
  \item For any $\alpha$ and $\beta$, $\varphi_{\alpha} \circ \psi_{\alpha} \circ \psi_{\beta} ^{-1} = \tilde{\psi}_{\alpha} \circ \tilde{\psi}_{\beta} ^{-1}\circ \varphi_{\beta}$ on $\psi_{\beta}(P|_{U_{\alpha} \cap U_{\beta}})$.
\end{itemize}
Then an equivalence class of such trivializations is called a {\it projective unitary flat structure} on $P$. A holomorphic $\mathbb P^{r}$-bundle endowed with a projective unitary flat structure on it is called a {\it projective unitary flat $\mathbb P^{r}$-bundle}. If $P$ is a projective unitary flat $\mathbb P^{r}$-bundle and we replace $\tilde{\psi}_{\alpha}$ by $\psi_{\alpha}$ in the definition of the equivalence relation, then the maps $\varphi_{\alpha}$ define a {\it unitary flat automorphism of $P$}. Denote by $\operatorname{Aut}_{X}^{u}(P)$ the group of all unitary flat automorphisms of $P$. We call it the {\it unitary flat automorphism group} of $P$.

Looking at $P$ as a holomorphic $\mathbb{P}^{r}$-bundle, we define a {\it holomorphic automorphism of $P$} by a collection of maps $\phi_{\alpha}\colon U_{\alpha} \times \mathbb{P}^r \rightarrow U_{\alpha} \times \mathbb{P}^r$ such that
\begin{itemize}
  \item $\phi_{\alpha}(x, v)=\big( x, g_{\alpha}(x) \cdot v \big)$, where $g_{\alpha}\colon U_{\alpha} \rightarrow {\rm PGL}(r+1, \mathbb{C})$ is holomorphic.
  \item For any $\alpha$ and $\beta$, $\phi_{\alpha} \circ \psi_{\alpha} \circ \psi_{\beta} ^{-1} = \psi_{\alpha} \circ \psi_{\beta} ^{-1}\circ \phi_{\beta}$ on $\psi_{\beta}(P|_{U_{\alpha} \cap U_{\beta}})$.
\end{itemize}
We denote by $\operatorname{Aut}_{X}^{h}(P)$ the  holomorphic automorphism group of $P$. Then there is a natural inclusion $\operatorname{Aut}_{X}^{u}(P) \subset \operatorname{Aut}_{X}^{h}(P)$.

Similarly, we could define {\it unitary flat structures} on holomorphic vector bundles. In other words, all the unitary flat vector bundles of rank $r$ on $X$ constitute the set $H^1\big(X, U(r)\big)$.

\begin{defn}
\label{def:nonflat} {\rm (\cite{Mand73})}
{\rm Suppose that $P$ is a projective unitary flat $\mathbb{P}^1$-bundle on a Riemann surface $X$ of genus $g_{X} > 0$. Then we could choose a family of trivializations $\psi_{\alpha}\colon P|_{U_{\alpha}} \rightarrow U_{\alpha} \times \mathbb{P}^1$ such that the corresponding transition functions $g_{\alpha\beta}\colon U_{\alpha} \cap U_{\beta} \rightarrow {\rm PSU}(2)$ are  constant maps. For any holomorphic section $s$ of $P$, $\psi_{\alpha} \circ s|_{U_{\alpha}}$ can be viewed as a holomorphic map $s_{\alpha}\colon U_{\alpha} \rightarrow \mathbb{P}^1$. We call $s$ {\it locally flat} if $s_{\alpha}$ is a constant map for all $\alpha$.}
\end{defn}

Since the transition functions of $P$ are constant, a section $s$ is locally flat if and only if $s_{\alpha}$ is a constant map for some $\alpha$. This property does not depend on the choice of the projective unitary flat trivializations.

\begin{lem}
\label{lem:nonflat}
Let $E$ be a stable vector bundle of rank $2$ on a compact Riemann surface $X$ of genus $g_{X} \geq 1$. Hence, $\mathbb P(E)$ is a projective unitary flat $\mathbb P^{1}$-bundle. Then for any section $s \colon X \to \mathbb P(E)$, $s$ is {\rm not} locally flat.
\end{lem}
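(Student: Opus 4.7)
My plan is to derive a contradiction from the stability of $E$ by showing that a locally flat section would force $E$ to split holomorphically into two subbundles of the same slope. The starting point is the Narasimhan-Seshadri theorem: since $E$ is stable of rank two on a compact Riemann surface of positive genus, $E$ admits a unique-up-to-scale Hermitian-Einstein metric $h$, and the associated Chern connection $D_E$ is projectively flat; this is precisely the data that gives $\mathbb{P}(E)$ its projective unitary flat structure, with constant transition functions in ${\rm PSU}(2)$ and monodromy representation $\rho \colon \pi_1(X) \to {\rm PSU}(2)$.

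The first step is the standard identification of a holomorphic section $s \colon X \to \mathbb{P}(E)$ with a holomorphic line subbundle $L \subset E$. Next, I would verify that $s$ being locally flat in the sense of Definition \ref{def:nonflat} is equivalent to $L$ being $D_E$-parallel. The quickest route is to lift $s$ to a $\rho$-equivariant holomorphic map $\tilde s \colon \tilde X \to \mathbb{P}^1$ from the universal cover: local flatness forces $\tilde s$ to be locally constant, and hence (by connectedness of $\tilde X$) globally constant, so its value is a point of $\mathbb{P}^1$ fixed by $\rho(\pi_1(X))$; translating back, this fixed point corresponds exactly to a $D_E$-parallel line subbundle of $E$.

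Once $L$ is $D_E$-parallel, its $h$-orthogonal complement $L^\perp$ is automatically $D_E$-parallel as well (since $D_E$ is a metric connection), so both $L$ and $L^\perp$ are holomorphic subbundles and $E \cong L \oplus L^\perp$ holomorphically. Restricting the Hermitian-Einstein condition to each summand yields $\deg L = \deg L^\perp = \tfrac{1}{2} \deg E$, so that $L$ is a line subbundle of $E$ whose slope equals $\mu(E)$, contradicting the stability of $E$ and proving the lemma.

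The one delicate point is the equivalence between local flatness (in the sense of Definition \ref{def:nonflat}) and $D_E$-parallelism of the associated line subbundle. This is a direct unpacking of definitions once one observes that the constant ${\rm PSU}(2)$-valued cocycle $\{g_{\alpha\beta}\}$ is exactly the flat projective connection on $\mathbb{P}(E)$, and that constant local expressions $s_\alpha \equiv c_\alpha$ glued by $c_\alpha = g_{\alpha\beta} \cdot c_\beta$ assemble into a genuine flat section of the projective flat bundle. With that identification in hand, the remaining ingredients---Narasimhan-Seshadri and the slope calculation for the parallel splitting---are entirely standard.
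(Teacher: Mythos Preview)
Your proof is correct and shares its core step with the paper's: lift the section to a $\rho$-equivariant map $\tilde s \colon \tilde X \to \mathbb{P}^1$ and observe that local flatness forces $\tilde s$ to be constant, yielding a fixed point of $\rho(\pi_1(X))$ on $\mathbb{P}^1$. The paper stops right there: Narasimhan--Seshadri says stability of $E$ is equivalent to irreducibility of $\rho$, and an irreducible subgroup of ${\rm PSU}(2)$ cannot fix a point of $\mathbb{P}^1$---done in one line. You instead translate the fixed point back into a $D_E$-parallel line subbundle $L$, take the orthogonal complement to produce a holomorphic splitting $E \cong L \oplus L^\perp$ with $\mu(L) = \mu(E)$, and contradict stability in bundle language. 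Your detour is sound (indeed, a stable bundle is already indecomposable, so the splitting alone suffices), and it has the mild pedagogical benefit of exhibiting the contradiction directly at the level of slopes rather than via the representation; the paper's version is simply shorter and more in keeping with having just invoked Narasimhan--Seshadri for the flat structure.
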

\begin{proof}
Since $E$ is a stable vector bundle on the Riemann surface $X$,  by the result of Narasimhan and Seshadri \cite{NS1965}, we know that $P = \mathbb P(E)$ arises from an irreducible projective unitary representation $\rho \colon \pi_{1}(X) \to {\rm PSU}(2)$. Hence, $\mathbb P(E)$ is a projective unitary flat $\mathbb P^{1}$-bundle. Let $\pi \colon \widetilde{X} \to X$ be the universal covering of $X$. Then $\pi^{*}(P)$ is a trivial $\mathbb P^{1}$-bundle on $\widetilde{X}$ and there is a canonical action of $\pi_{1}(X)$ on $\pi^{*}(P)\to \widetilde{X}$ such that the quotient  coincides with the original bundle $P\to X$. Moreover, if $s \in \Gamma(X, P)$ is locally flat, then $\pi^{*}(s)$ is a constant section of $\pi^{*}(P) \to \widetilde{X}$ which is invariant under the action of $\pi_{1}(X)$. Hence, the action of the group $\rho\big(\pi_{1}(X)\big)$ on $\mathbb P^{1}$ has at least one fixed point, which is in contradiction with the irreducibility of $\rho$.
\end{proof}

Now we could state the correspondence between cone spherical metrics representing effective divisors and projective unitary flat $\mathbb P^{1}$-bundles with sections which are not locally flat. 
\begin{thm}
\label{thm:correspondence}
Consider cone spherical metrics representing effective divisors on a compact Riemann surface $X$ of genus $g_{X} \geq 0$. Then there exists the following one-to-one correspondence
\[ \left \{ \begin{array}{c} \begin{tabular}{p{9em}}
Cone spherical metric $g$ representing an effective divisor.
\end{tabular}\end{array}\right \} \leftrightarrow
\left \{ \begin{array}{c} \begin{tabular}{p{13em}}
The pair $(P, s)$, where $P$ is a projective unitary flat $\mathbb P^{1}$-bundle and the section $s \colon X \to P$ is not locally flat.
\end{tabular}\end{array} \right \} \Big/ \operatorname{Aut}_{X}^{u}(P) \]
Note that $P = \mathbb P(E)$ for some rank two vector bundle $E$ since $X$ is a Riemann surface.
\begin{itemize}
\item If $g$ is an irreducible metric representing an effective divisor of even {\rm (}resp. odd{\rm )} degree, then $E$ is a stable vector bundle of even {\rm (}resp. odd{\rm )} degree.
\item If $g$ is reducible, then $E \cong (J \oplus J^{-1})\otimes L$ for some flat line bundle $J$ and some line bundle $L$ on $X$.
\end{itemize}
In summary, $E$ is a rank two polystable vector bundle on $X$.
\end{thm}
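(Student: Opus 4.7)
The plan is to build the correspondence by constructing maps in both directions and to classify the underlying bundle $E$ via the type of monodromy representation attached to a developing map.

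For the \emph{forward} direction, starting from a cone spherical metric $g$ representing an effective divisor $D$, take a developing map $f\colon\widetilde{X}\to\mathbb{P}^1$ with projective unitary monodromy $\rho_f\colon\pi_1(X)\to\mathrm{PSU}(2)$, as recalled in Subsection \ref{sec:knownresult}. Form the associated projective unitary flat $\mathbb{P}^1$-bundle $P_{\rho_f}:=\widetilde{X}\times_{\rho_f}\mathbb{P}^1$ on $X$; the $\rho_f$-equivariance of $f$ makes $f$ descend to a holomorphic section $s_f\colon X\to P_{\rho_f}$. In any projective unitary flat trivialization this section is represented by the restriction of $f$, which is non-constant because otherwise $f^*g_{\mathrm{FS}}$ would vanish and could not define a metric, so $s_f$ is not locally flat in the sense of Definition \ref{def:nonflat}. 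Post-composing $f$ with an element $A\in\mathrm{PSU}(2)$ conjugates $\rho_f$ by $A$ and moves the pair $(P_{\rho_f},s_f)$ only within its $\operatorname{Aut}_X^u(P)$-orbit, so the forward map is well-defined on the quotient.

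For the \emph{backward} direction, start from a pair $(P,s)$ with $s$ not locally flat and choose a projective unitary flat trivialization $\{(U_\alpha,\psi_\alpha)\}$ whose transition functions are constant maps into $\mathrm{PSU}(2)$. Writing $s$ in these trivializations as non-constant holomorphic maps $s_\alpha\colon U_\alpha\to\mathbb{P}^1$, set $g|_{U_\alpha}:=s_\alpha^*g_{\mathrm{FS}}$, where $g_{\mathrm{FS}}$ denotes the Fubini-Study metric of constant curvature $+1$ on $\mathbb{P}^1$. Since $\mathrm{PSU}(2)$ preserves $g_{\mathrm{FS}}$, these local expressions glue to a global conformal metric with cone singularities precisely along the ramification divisor of $s$, which is effective; thus $g$ is a cone spherical metric representing an effective divisor, and applying an element of $\operatorname{Aut}_X^u(P)$ to $s$ leaves $g$ unchanged. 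Bijectivity then follows by unwinding definitions: the developing map recovered from a pair $(P,s)$ globalizes the local $s_\alpha$, while the section associated to a developing map of $g$ is precisely $s_f$.

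For the classification of $E$, recall from Tsen's theorem that every $\mathbb{P}^1$-bundle on the curve $X$ comes from a rank-two vector bundle $E$, determined up to tensoring by a line bundle. If $g$ is irreducible then by definition no developing map has monodromy in a $\mathrm{PSU}(2)$-conjugate of $U(1)$, so $\rho_f$ acts without fixed point on $\mathbb{P}^1$, and Narasimhan-Seshadri yields a stable $E$ with $\mathbb{P}(E)\cong P$. If $g$ is reducible, then $\rho_f$ factors through a character $\chi\colon\pi_1(X)\to U(1)$; the associated flat line bundle $J$ satisfies $\mathbb{P}(J\oplus J^{-1})\cong P$, so $E\cong (J\oplus J^{-1})\otimes L$ for some line bundle $L$ absorbing the twist ambiguity. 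The parity assertion follows by identifying $D$ with the ramification divisor of $s$: writing $s$ as an embedding $L'\hookrightarrow E$, the differential of $s$ is a holomorphic section of $(L')^{-1}\otimes(E/L')\otimes K_X$, whence $\mathcal{O}_X(D)\cong (L')^{-2}\otimes\det E\otimes K_X$ and $\deg D\equiv\deg E\pmod 2$, in accordance with the lifting criterion of Corollary \ref{cor:criterion}. The step I expect to be most delicate is the precise tracking of the equivalence: one must check that the backward map becomes well-defined exactly when one quotients by $\operatorname{Aut}_X^u(P)$ rather than by the strictly larger $\operatorname{Aut}_X^h(P)$, since the latter would identify non-isometric metrics; once this is settled, Lemma \ref{lem:nonflat} ensures that in the stable regime every holomorphic section is automatically not locally flat, matching the irreducibility criterion for $g$.
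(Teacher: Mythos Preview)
Your proof is correct and follows essentially the same approach as the paper: constructing the correspondence via developing maps (you phrase it globally on the universal cover, the paper works with local branched projective coverings $\{U_\alpha,f_\alpha\}$, but these are equivalent), pulling back the Fubini--Study metric for the inverse, and invoking Narasimhan--Seshadri for the stable/polystable classification. The only notable variation is your parity argument via the vertical differential of $s$ as a section of $(L')^{-1}\otimes(E/L')\otimes K_X$, which is the conceptual version of the explicit transition-function computation the paper carries out in Lemma~\ref{lem:evendegree}; both yield $\mathcal{O}_X(D)\cong (L')^{-2}\otimes\det E\otimes K_X$ and hence $\deg D\equiv\deg E\pmod 2$.
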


We prove this theorem in the former part of Subsection \ref{subsec:pf}.


\subsection{Branched projective coverings}
\label{subsec:branch}

In this subsection, we at first introduce, among others, the notions of branched projective coverings and indigenous bundles, which are crucial in the proof of Theorem \ref{thm:correspondence}. Then we prove Lemma \ref{lem:evendegree} for the proof of Theorems \ref{thm:correspondence} and \ref{thm:corr}.

Let $X$ be a compact Riemann surface, and $\{U_{\alpha}, z_{\alpha}\}$ a holomorphic coordinate covering of $X$. If for each $\alpha$, $w_{\alpha}\colon U_{\alpha} \rightarrow \mathbb{P}^1$ is not a constant holomorphic map such that $w_{\alpha}(p) = g_{\alpha \beta}(p) \cdot (w_{\beta}(p))$, where $g_{\alpha\beta}(p) \in {\rm PSL}(2, \mathbb{C})$ is independent of $p \in U_{\alpha} \cap U_{\beta}$, then $\{U_{\alpha}, w_{\alpha}\}$ is called a {\it branched projective covering} of $X$. Without loss of generality, we may assume that for each $\alpha$, $w_{\alpha}$ has at most one branch point $p_{\alpha}$ in $U_{\alpha}$ and $p_{\alpha} \not\in U_{\alpha} \cap U_{\beta}$ for all $\beta \neq \alpha$. Given a branched projective covering $\{U_{\alpha}, w_{\alpha}\}$ of $X$, we call the effective divisor
\[ B_{\{U_{\alpha},w_{\alpha}\}} = \sum_{p \in X} \nu_{w_{\alpha}}(p)\cdot p\]
the {\it ramification divisor} of  $\{U_{\alpha}, w_{\alpha}\}$, where $\nu_{w_{\alpha}}(p)$ is the branching order of $w_{\alpha}$ at the point $p$ \cite[Section 2]{Mand72}. Then we could naturally associate a flat $\mathbb{P}^1$-bundle $P$ on $X$ \cite[Section 2]{Gunning67} to the branched projective covering  $\{U_{\alpha},w_{\alpha}\}$ of $X$, and obtain canonically a section $s$ of $P$ defined by
\[ w_\alpha \colon U_\alpha\to {\mathbb P}^1\quad \text{ for all }\alpha, \]
which is not locally flat \cite[Section 2]{Gunning67}. Such a flat $\mathbb{P}^1$-bundle is called an {\it indigenous bundle} associated to the branched projective covering $\{U_{\alpha}, w_{\alpha}\}$ on $X$ \cite{Gunning67, Mand73}.

Motivated by \cite[Theorem 2]{Gunning67} and \cite[Theorem 2]{Mand73}, we find the following lemma.

\begin{lem}
\label{lem:evendegree}
Let $P$ be an indigenous bundle on a compact Riemann surface $X$ of genus $g_{X} \geq 0$ associated to a branched projective covering $\{U_{\alpha}, w_{\alpha}\}$. Then
\begin{enumerate}
\item If $P = \mathbb P(E)$ for some rank $2$ vector bundle on $X$ and $L$ is the line subbundle of $E$ which is the preimage of the section $s = \{w_{\alpha}\} \colon X \to P$, then $\mathcal O_{X} \left(B_{\{U_{\alpha},w_{\alpha}\}}\right) = L^{-2} \otimes \det E \otimes K_{X}$.
\item The ramification divisor $B_{\{U_{\alpha},w_{\alpha}\}}$ is of even degree if and only if $P = \mathbb{P}(E)$ for some flat rank two vector bundle $E$ with $\det E = \mathcal{O}_X$. Under this context,  there exists a meromorphic section ${\mathfrak s} = \{({\mathfrak s}_{1, \alpha}, {\mathfrak s}_{2,\alpha})\}$ of $E$ such that $w_{\alpha} = [{\mathfrak s}_{1,\alpha}:{\mathfrak s}_{2,\alpha}].$
\end{enumerate}
\end{lem}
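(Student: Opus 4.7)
For part (1), I would interpret the section $s=\{w_\alpha\}\colon X\to \mathbb P(E)$ through the line subbundle $L\subset E$ and study its vertical differential (equivalently, the second fundamental form of $L$ in $E$). The vertical tangent space of $\mathbb P(E)$ at a line $\ell\subset E_x$ is canonically $\mathrm{Hom}(\ell,\,E_x/\ell)$, so this differential produces a global section
\[ ds^{\mathrm{vert}}\in H^0\bigl(X,\, K_X\otimes \mathrm{Hom}(L,\,E/L)\bigr)\cong H^0\bigl(X,\,K_X\otimes L^{-2}\otimes\det E\bigr), \]
where I use the canonical isomorphism $L\otimes(E/L)\cong\det E$. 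A coordinate computation then identifies the zero divisor: on $U_\alpha$, if $(f_\alpha,g_\alpha)$ is a local frame of $L$ inside a trivialization of $E$, then $w_\alpha=f_\alpha/g_\alpha$ and $dw_\alpha = (f_\alpha'g_\alpha - f_\alpha g_\alpha')\,g_\alpha^{-2}\,dz_\alpha$; the Wronskian in the numerator also controls branching in the chart $1/w_\alpha$ (needed near zeros of $g_\alpha$), so $\mathrm{Div}(ds^{\mathrm{vert}})=B_{\{U_\alpha,w_\alpha\}}$, yielding $\mathcal O_X\bigl(B_{\{U_\alpha,w_\alpha\}}\bigr)\cong K_X\otimes L^{-2}\otimes\det E$.

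For part (2), the direction $(\Leftarrow)$ is immediate from (1): if $\det E=\mathcal O_X$, then $\deg B=2g_X-2-2\deg L$ is even. For $(\Rightarrow)$, (1) gives $\deg\det E\equiv \deg B\equiv 0\pmod{2}$, so there exists a line bundle $J$ on $X$ with $J^{\otimes 2}\cong\det E$, and $E':=E\otimes J^{-1}$ satisfies $\mathbb P(E')=P$ together with $\det E'=\mathcal O_X$. It remains to arrange $E'$ to be flat as a holomorphic vector bundle, equivalently to arise from a representation $\pi_1(X)\to \mathrm{SL}(2,\mathbb C)$. Since $P$ is indigenous, its monodromy $\rho_P\colon\pi_1(X)\to\mathrm{PSL}(2,\mathbb C)$ is prescribed; the obstruction to lifting $\rho_P$ to $\mathrm{SL}(2,\mathbb C)$ lies in $H^2\bigl(\pi_1(X),\{\pm 1\}\bigr)\cong\mathbb Z/2$ and coincides with the mod-2 reduction of $c_1(F)$ for any rank-two holomorphic $F$ with $\mathbb P(F)=P$. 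Because $c_1(E')=0$, the lift exists and produces a flat rank-two bundle $\widetilde E$ with $\mathbb P(\widetilde E)=P$; a further twist by a flat square root of $(\det\widetilde E)^{-1}$, which exists because $\mathrm{Pic}^0(X)$ is a divisible abelian group, yields the required flat $E$ with $\det E=\mathcal O_X$ and $\mathbb P(E)=P$. Finally, the desired meromorphic section is obtained by choosing any nonzero meromorphic section of the line subbundle $L\subset E$ corresponding to $s$ (available on any compact Riemann surface) and reading off its components $(\mathfrak s_{1,\alpha},\mathfrak s_{2,\alpha})$ in local trivializations of $E$; then $[\mathfrak s_{1,\alpha}:\mathfrak s_{2,\alpha}]=w_\alpha$ by the very definition of $L$.

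The main obstacle I anticipate is the flatness step in $(\Rightarrow)$: converting the topological parity $\deg B\equiv 0\pmod{2}$ into an $\mathrm{SL}(2,\mathbb C)$-lift of the projective monodromy, and then normalizing the determinant to $\mathcal O_X$ while preserving flatness. By comparison, the vertical-derivative computation in (1) and the construction of $\mathfrak s$ are routine once the line subbundle $L$ has been identified.
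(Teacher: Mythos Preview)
Your proposal is correct and follows a genuinely different, more invariant route than the paper.

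For part (1), the paper works entirely with explicit cocycles: after arranging $w_\alpha\colon U_\alpha\to\mathbb C$ and lifting the projective transitions to $M_{\alpha\beta}=\begin{pmatrix}a_{\alpha\beta}&b_{\alpha\beta}\\c_{\alpha\beta}&d_{\alpha\beta}\end{pmatrix}\in\mathrm{SL}(2,\mathbb C)$, it computes $dw_\alpha/dw_\beta=(c_{\alpha\beta}w_\beta+d_{\alpha\beta})^{-2}$, so the line bundle $\Lambda$ with cocycle $(c_{\alpha\beta}w_\beta+d_{\alpha\beta})^2$ equals $\mathcal O_X(-B)\otimes K_X$; it then performs a fibrewise M\"obius change of trivialization sending the section to $[1:0]$, reads off the new transition functions, and matches them against those of $\mathbb P(E)$ with $L$ as subbundle to conclude $\Lambda=L^2\otimes(\det E)^{-1}$. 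Your vertical-differential argument is the coordinate-free packaging of the same identity; what you gain is conceptual clarity, what the paper gains is that no auxiliary facts about the relative tangent bundle of $\mathbb P(E)$ need be quoted.

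For part (2), the contrast is sharper. The paper stays with cocycles: once $\deg\Lambda$ is even it picks a square root $N$ with $N_{\alpha\beta}^2=(c_{\alpha\beta}w_\beta+d_{\alpha\beta})^2$, adjusts the signs of the lifts $M_{\alpha\beta}$ so that $N_{\alpha\beta}=c_{\alpha\beta}w_\beta+d_{\alpha\beta}$, and then checks by hand that the sign-corrected $M_{\alpha\beta}$ satisfy the cocycle condition, which is exactly the flat $E$ with $\det E=\mathcal O_X$; the meromorphic section is $(w_\alpha f_\alpha,f_\alpha)$ for any meromorphic section $\{f_\alpha\}$ of $N$. You instead invoke the obstruction theory for lifting $\rho_P\colon\pi_1(X)\to\mathrm{PSL}(2,\mathbb C)$ to $\mathrm{SL}(2,\mathbb C)$, identifying the obstruction with $c_1\bmod 2$ of any rank-two lift. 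This is correct and more conceptual, but imports nontrivial background; the paper's argument is elementary and self-contained. Two minor remarks on your write-up: the isomorphism $H^2(\pi_1(X),\{\pm1\})\cong\mathbb Z/2$ holds only for $g_X\ge 1$ (for $g_X=0$ the group is trivial, which is of course even better for you); and your final twist by a flat square root of $(\det\widetilde E)^{-1}$ is superfluous, since an $\mathrm{SL}(2,\mathbb C)$-representation already yields $\det\widetilde E\cong\mathcal O_X$.
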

\begin{proof} Since $\{U_{\alpha}, w_{\alpha}\}$ is a branched projective covering on $X$, we could choose a holomorphic coordinate covering  $\{U_{\alpha}, z_{\alpha}\}$  of $X$ with trivializations $\psi_{\alpha}\colon P|_{U_{\alpha}} \rightarrow U_{\alpha} \times \mathbb{P}^1$ and a family of matrices
$M_{\alpha\beta} = \begin{pmatrix}
   a_{\alpha\beta} & b_{\alpha\beta} \\
   c_{\alpha\beta} & d_{\alpha\beta}
\end{pmatrix} \in {\rm SL(2, \mathbb{C})}$ on intersections $U_{\alpha} \cap U_{\beta}\not=\emptyset$ such that
\begin{align}
\label{wtransition}
  w_{\alpha} = \frac{a_{\alpha\beta}w_{\beta} + b_{\alpha\beta}}{c_{\alpha\beta}w_{\beta} + d_{\alpha\beta}}
\end{align}
and $w_\alpha=w_{\alpha}(z_{\alpha})\colon U_\alpha\to {\mathbb C}$ are holomorphic functions by using suitable M{\" o}bius transformations if necessary. Hence we have
$\frac{dw_{\alpha}}{dw_{\beta}} = \frac{1}{(c_{\alpha\beta}w_{\beta}+d_{\alpha\beta})^2}$.
Since $K_X$ is defined by the transition functions
$k_{\alpha\beta} = \frac{dz_{\beta}}{dz_{\alpha}}$,
we have
\[ \lambda_{\alpha\beta} := (c_{\alpha\beta}w_{\beta}+d_{\alpha\beta})^2 = \frac{w_{\beta}'(z_{\beta})} {w_{\alpha}'(z_{\alpha})} \cdot \frac{dz_{\beta}}{dz_{\alpha}} = h_{\alpha\beta}k_{\alpha\beta}, \quad \text{where}\quad
h_{\alpha\beta} = \frac{w_{\beta}'(z_{\beta})} {w_{\alpha}'(z_{\alpha})}.\]
Let $H$ be the line bundle defined by the transition functions $h_{\alpha\beta}$. Then
$\{U_{\alpha}, \frac{1}{w_{\alpha}'(z_{\alpha})}\}$ forms a meromorphic section of $H$ so that
$H = \mathcal{O}_{X}\big(-B_{\{U_{\alpha}, w_{\alpha}\}}\big)$. Hence, for the first assertion, we only need to show $L^{2} = \Lambda \otimes \det E$, where $\Lambda$ is the line bundle defined by transition functions $\lambda_{\alpha \beta}$.

We choose a new holomorphic trivialization of $P|_{U_{\alpha}}$ as follows:
\begin{align*}
\phi_{\alpha} = \mu_{\alpha} \circ \psi_{\alpha} \colon P|_{U_{\alpha}} \xrightarrow{\psi_{\alpha}} U_{\alpha} \times \mathbb{P}^1 &\xrightarrow{\mu_{\alpha}} U_{\alpha} \times \mathbb{P}^1, \\
\text{where } \; \mu_{\alpha}(p, v) &= \left(p, \frac{1}{v - w_{\alpha}(p)}\right).
\end{align*}
Then, with respect to the new trivializations, the section $s=\{w_\alpha\}$ turns out to be $s = [1:0]$. Moreover,  we could obtain by computation that the corresponding transition functions have the form of
\[ \phi_{\alpha \beta}(p) \cdot v = (c_{\alpha \beta}w_{\beta} + d_{\alpha \beta})^{2} \ v + c_{\alpha \beta}(c_{\alpha \beta}w_{\beta} + d_{\alpha \beta}), \quad \forall p \in U_{\alpha} \cap U_{\beta}, v \in \mathbb{P}^{1}.\]
Actually, $\phi_{\alpha \beta}$ is defined by
\begin{align*}
\big(p, \, \phi_{\alpha \beta}(p) \cdot v \big) &= \phi_{\alpha} \circ \phi_{\beta}^{-1}(p, v) = \mu_{\alpha} \circ \psi_{\alpha} \circ \psi_{\beta}^{-1} \circ \mu_{\beta}^{-1}(p, v) \\
&= \mu_{\alpha} \circ (\psi_{\alpha} \circ \psi_{\beta}^{-1}) \left( p, \; w_{\beta} + \frac{1}{v} \right) \\
&= \mu_{\alpha} \left( p,\; \frac{(a_{\alpha \beta} w_{\beta} + b_{\alpha \beta}) \ v + a_{\alpha \beta}}{(c_{\alpha \beta} w_{\beta} + d_{\alpha \beta}) \ v + c_{\alpha \beta}} \right) \\
&= \left( p, \; \frac{1}{\frac{(a_{\alpha \beta} w_{\beta} + b_{\alpha \beta}) \ v + a_{\alpha \beta}}{(c_{\alpha \beta} w_{\beta} + d_{\alpha \beta}) \ v + c_{\alpha \beta}} - w_{\alpha} }\right) \\
&= \left( p, \; \frac{(c_{\alpha \beta} w_{\beta} + d_{\alpha \beta}) \ v + c_{\alpha \beta}}{a_{\alpha \beta} - c_{\alpha \beta} w_{\alpha}} \right) \quad \text{since} \; w_{\alpha} = \frac{a_{\alpha\beta}w_{\beta} + b_{\alpha\beta}}{c_{\alpha\beta}w_{\beta} + d_{\alpha\beta}} \\
&= \left( p, \; \frac{ \big( (c_{\alpha \beta} w_{\beta} + d_{\alpha \beta}) \ v + c_{\alpha \beta} \big) (c_{\alpha \beta} w_{\beta} + d_{\alpha \beta})}{a_{\alpha \beta} (c_{\alpha \beta} w_{\beta} + d_{\alpha \beta}) - c_{\alpha \beta} (a_{\alpha\beta}w_{\beta} + b_{\alpha\beta})} \right) \\
&= \left( p, \; (c_{\alpha \beta}w_{\beta} + d_{\alpha \beta})^{2} \ v + c_{\alpha \beta}(c_{\alpha \beta}w_{\beta} + d_{\alpha \beta}) \right).
\end{align*}

On the other hand,  let $\ell_{\alpha \beta}$ be the transition functions of $L$. Then the transition functions of $E$ look like
\[ \begin{pmatrix}
  \ell_{\alpha \beta} & t_{\alpha \beta} \\
   0 & \ell_{\alpha \beta}^{-1} \xi_{\alpha \beta}
\end{pmatrix}, \]
where $\xi = \det E$. Hence, $g_{\alpha \beta}(p) \cdot v = \ell_{\alpha \beta}^{2} \xi^{-1}_{\alpha \beta} \ v + \ell_{\alpha \beta}\xi^{-1}_{\alpha \beta} t_{\alpha \beta}$ is a transition function of $P$. Moreover, $s = [1:0]$ since it is defined by $L \to E$.

Note that the $\mathbb P^{1}$-bundles defined by $\phi_{\alpha \beta}$ and $g_{\alpha \beta}$ are holomorphically equivalent. Let $\varphi_{\alpha}\colon U_{\alpha} \times \mathbb P^{1} \rightarrow U_{\alpha} \times \mathbb P^{1}$ be a  1-coboundary with $\varphi_{\alpha}(p, v) = \big(p, g_{\alpha}(p) \cdot v \big)$ \cite[p.409]{Atiyah55}. Then $g_{\alpha} \cdot \big( [1:0] \big) = [1:0]$, i.e. $g_{\alpha}$ is an affine transformation. Let $g_{\alpha} \cdot v = A_{\alpha}\, v + B_{\alpha}\ (A_{\alpha} \neq 0, v \in \mathbb P^{1} \setminus \{ \infty \})$, by
\[ \phi_{\alpha \beta} = g_{\alpha} \circ g_{\alpha \beta} \circ g_{\beta}^{-1} \ \text{ on } U_{\alpha} \cap U_{\beta}, \]
we obtain that $(c_{\alpha \beta}w_{\beta} + d_{\alpha \beta})^{2} = A_{\alpha} \, \ell_{\alpha \beta}^{2} \xi^{-1}_{\alpha \beta} \, A_{\beta}^{-1}$. Therefore, line bundles defined by $(c_{\alpha \beta}w_{\beta} + d_{\alpha \beta})^{2}$ and $\ell_{\alpha \beta}^{2} \xi^{-1}_{\alpha \beta}$ are equivalent, i.e. $\Lambda = L^{2} \otimes (\det E)^{-1}$.

For the second part, note that $H$ has degree $\left(-\deg B_{\{U_{\alpha}, w_{\alpha}\}}\right)$ and the line bundle $\Lambda$ has degree $\left(2g_{X} - 2 - \deg B_{\{U_{\alpha}, w_{\alpha}\}}\right)$.

Suppose that $\deg B_{\{U_{\alpha}, w_{\alpha}\}}$ is even. Then so is $\deg \Lambda$. There exists a line bundle $N$ defined by $N_{\alpha\beta}$ such that $N_{\alpha\beta}^2 = \lambda_{\alpha\beta}$. By changing the sign of $M_{\alpha\beta}\in {\rm SL(2,\,{\mathbb C})}$ if necessary,  we have $N_{\alpha\beta} = c_{\alpha\beta}w_{\beta} + d_{\alpha\beta}$. Taking a non-trivial  meromorphic section $\{U_{\alpha}, f_{\alpha}\}$ of $N$, we find
\[ \begin{pmatrix}
  w_{\alpha}f_{\alpha} \\
  f_{\alpha}
\end{pmatrix} = \begin{pmatrix}
  a_{\alpha\beta} & b_{\alpha\beta} \\
  c_{\alpha\beta} & d_{\alpha\beta}
\end{pmatrix} \begin{pmatrix}
  w_{\beta}f_{\beta} \\
  f_{\beta}
\end{pmatrix} \]
and $M_{\alpha\beta} = M_{\alpha\gamma} M_{\gamma\beta}$. The desired flat rank two vector bundle $E$ is just the one defined by $M_{\alpha\beta}$. Furthermore,  $(w_{\alpha}f_{\alpha}, f_{\alpha})$ forms a meromorphic section ${\mathfrak s}=({\mathfrak s}_{1,\alpha},\,{\mathfrak s}_{2,\alpha})$ of $E$ satisfying $w_\alpha=[{\mathfrak s}_{1,\alpha} : {\mathfrak s}_{2,\alpha}]$.

Suppose that there exists a rank two flat vector bundle $E$ with ${\mathbb P}(E)=P$ for the indigenous bundle $P$ which is associated to the projective covering $\{(U_\alpha,\,w_\alpha)\}$. Then $s = \{w_\alpha\}$ is the canonical section of $P$. By the argument in the first paragraph of \cite[Section 4]{Atiyah57II}, there exists a line subbundle $L$ of $E$ generated by $s$ such that each non-trivial meromorphic section ${\mathfrak s}=\big({\mathfrak s}_{1,\alpha},\,{\mathfrak s}_{2,\,\alpha}\big)$ of the line bundle $L\subset E$ satisfies $w_\alpha=[{\mathfrak s}_{1,\alpha} : {\mathfrak s}_{2,\alpha}]$. Denote by
$ M_{\alpha\beta} = \begin{pmatrix}
   a_{\alpha\beta} & b_{\alpha\beta} \\
   c_{\alpha\beta} & d_{\alpha\beta}
\end{pmatrix} \in {\rm SL(2, \mathbb{C})}$
the transition functions of $E$. Recalling \eqref{wtransition} in the first paragraph of the proof, we only need to show the degree of the line bundle $\Lambda$ is even since $\Lambda = H \otimes K_X$ and
\[ \deg\, B_{\{U_\alpha,\,w_\alpha\}}\equiv \deg\, H\quad (\text{mod}\ 2).\]
Since
$w_\alpha = [{\mathfrak s}_{1,\alpha} :{\mathfrak s}_{2,\alpha}]$, we have
\[ \begin{pmatrix}
  w_{\alpha}{\mathfrak s}_{2,\alpha} \\
  {\mathfrak s}_{2,\alpha}
\end{pmatrix} = \begin{pmatrix}
  {\mathfrak s}_{1,\alpha} \\
  {\mathfrak s}_{2,\alpha}
\end{pmatrix} = \begin{pmatrix}
  a_{\alpha\beta} & b_{\alpha\beta} \\
  c_{\alpha\beta} & d_{\alpha\beta}
\end{pmatrix} \begin{pmatrix}
 {\mathfrak s}_{1,\beta} \\
  {\mathfrak s}_{2,\beta}
\end{pmatrix} = \begin{pmatrix}
  a_{\alpha\beta} & b_{\alpha\beta} \\
  c_{\alpha\beta} & d_{\alpha\beta}
\end{pmatrix} \begin{pmatrix}
  w_{\beta}{\mathfrak s}_{2,\beta} \\
  {\mathfrak s}_{2,\beta}
\end{pmatrix} \]
and ${\mathfrak s}_{2, \alpha} = (c_{\alpha\beta}w_{\beta} + d_{\alpha\beta}) {\mathfrak s}_{2,\beta}$. It follows that the functions $\left(c_{\alpha\beta}w_{\beta} + d_{\alpha\beta}\right)$ on $U_{\alpha} \cap U_{\beta}$ are 1-cocyles, which define a line bundle $N$ with $N^2 = \Lambda$. Hence, $\deg \Lambda$ is even.
\end{proof}

Let $g$ be a cone spherical metric on $X$ representing an effective divisor $D = \sum_{j=1}^n \beta_j \, p_j$. Suppose that $\{U_{\alpha}, z_{\alpha}\}$ is a holomorphic coordinate covering of $X$ such that each $U_\alpha$ is a sufficiently small  disc and contains at most one point in ${\rm Supp}\,D$. Then there exists a holomorphic map $f_{\alpha}\colon U_{\alpha} \rightarrow \mathbb{P}^1$ for each $\alpha$ such that it has at most one ramified point and $g|_{U_{\alpha}} = f_{\alpha}^*\,g_{\rm st}$ \cite[Lemmas 2.1 and 3.2]{CWWX2015}, where $g_{\rm st}=\frac{4|dw|^2}{(1+|w|^2)^2}$ is the standard metric on ${\mathbb P}^1$. Then these pairs $\{U_{\alpha}, f_{\alpha}\}$ define a branched projective covering of $X$ with ramification divisor being $D$ such that
  \[ f_{\alpha}(p) = f_{\alpha \beta}(p) \cdot f_{\beta}(p),  \]
where $f_{\alpha \beta}(p) \in {\rm PSU(2)} \subseteq {\rm PSL}(2, \mathbb C)$ is  independent of $p \in U_{\alpha} \cap U_{\beta}$. Then we obtain an indigenous bundle $P$ defined by $f_{\alpha \beta}$ on $X$ associated to the branched projective covering $\{U_\alpha,\, f_{\alpha}\}$ and a canonical section $s=\{f_{\alpha}\}$ of $P$ which is not locally flat.

As an application of Lemma \ref{lem:evendegree}, we could give a criterion for the parity of the degree of effective divisors represented by cone spherical metrics as follows.

\begin{cor}
\label{cor:criterion}
Let $D$ be an effective divisor represented by a cone spherical metric $g$ on a compact Riemann surface $X$ of genus $g_X\geq 0$. Then $\deg D$ is even if and only if the monodromy representation $\rho_f\colon \pi_1(X)\to {\rm PSU(2)}$ of a developing map $f$ of $g$ could be lifted to $\widetilde{\rho_f}\colon \pi_1(X)\to {\rm SU}(2)$ with the following commutative diagram
\[ \xymatrix{
  \pi_{1}(X) \ar[r]^{\widetilde{\rho_f}} \ar[rd]_{\rho _{f}} & {\rm SU(2)} \ar[d] ^{\pi} \\
                                                                                & {\rm PSU(2)}.
} \]
\end{cor}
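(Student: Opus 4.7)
The plan is to reduce Corollary \ref{cor:criterion} to Lemma \ref{lem:evendegree}(2) via the passage from the metric $g$ to its associated indigenous bundle. First I would invoke the construction described in the paragraph immediately following Lemma \ref{lem:evendegree}: choose a fine enough chart $\{U_\alpha, z_\alpha\}$ and holomorphic maps $f_\alpha\colon U_\alpha\to\mathbb{P}^1$ with $g|_{U_\alpha}=f_\alpha^{*}g_{\mathrm{st}}$, so that $\{U_\alpha, f_\alpha\}$ is a branched projective covering with ramification divisor $D$ and gluing $f_{\alpha\beta}\in {\rm PSU}(2)$. Let $P$ be the associated indigenous bundle. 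Since the transition functions $f_{\alpha\beta}$ are constant in ${\rm PSU}(2)$, $P$ is a projective unitary flat $\mathbb{P}^1$-bundle whose monodromy representation is canonically identified with $\rho_f\colon\pi_1(X)\to {\rm PSU}(2)$.

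For the easy direction (``if''), suppose that $\rho_f$ admits a lift $\widetilde{\rho_f}\colon\pi_1(X)\to {\rm SU}(2)$. Then $\widetilde{\rho_f}$ determines a flat rank-two unitary vector bundle $E$ on $X$ with $\det E=\mathcal{O}_X$ (since ${\rm SU}(2)$-monodromy has trivial determinant), and by construction $\mathbb{P}(E)=P$. Lemma \ref{lem:evendegree}(2) then yields that $\deg D$ is even.

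For the converse, suppose $\deg D$ is even. By Lemma \ref{lem:evendegree}(2) there exists a flat rank-two vector bundle $E$ on $X$ with $\det E=\mathcal{O}_X$ and $\mathbb{P}(E)=P$. I would then argue that one may take $E$ to be unitary flat, i.e. with structure group ${\rm SU}(2)$: starting from the given ${\rm PSU}(2)$-transition data $f_{\alpha\beta}$, the sign-adjustment step in the proof of Lemma \ref{lem:evendegree}(2) (choosing $N_{\alpha\beta}=c_{\alpha\beta}w_\beta+d_{\alpha\beta}$ and flipping the signs of $M_{\alpha\beta}\in {\rm SL}(2,\mathbb{C})$ if necessary) can be carried out entirely inside ${\rm SU}(2)$, since each local lift of an element of ${\rm PSU}(2)$ to ${\rm SL}(2,\mathbb{C})$ already lies in ${\rm SU}(2)\subset {\rm SL}(2,\mathbb{C})$. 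Hence $E$ is flat unitary with $\det E=\mathcal{O}_X$, and its monodromy $\widetilde{\rho_f}\colon\pi_1(X)\to {\rm SU}(2)$ covers $\rho_f$ via $\pi\colon{\rm SU}(2)\to {\rm PSU}(2)$.

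The main subtlety lies in the converse: one must ensure that the flat lift $E$ furnished by Lemma \ref{lem:evendegree}(2) can indeed be chosen with ${\rm SU}(2)$-structure rather than merely ${\rm SL}(2,\mathbb{C})$-structure. This is where the hypothesis that $g$ is a spherical cone metric (so that $\rho_f$ already lies in ${\rm PSU}(2)$) is crucially used, and it is the only step that is not purely formal. Once this is established, the lift of representations is a direct consequence of the equivalence between flat ${\rm SU}(2)$-bundles on $X$ and conjugacy classes of representations $\pi_1(X)\to {\rm SU}(2)$, completing the commutative diagram.
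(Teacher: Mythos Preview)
Your proposal is correct and follows essentially the same approach as the paper, which simply presents Corollary \ref{cor:criterion} as an immediate application of Lemma \ref{lem:evendegree}(2) without further argument. You are in fact more careful than the paper on one point: you explicitly address the passage from the ${\rm SL}(2,\mathbb{C})$-flat bundle $E$ produced by Lemma \ref{lem:evendegree}(2) to an ${\rm SU}(2)$-flat bundle, observing that when the original transition data $f_{\alpha\beta}$ lie in ${\rm PSU}(2)$, the lifts $M_{\alpha\beta}$ and their sign adjustments stay in ${\rm SU}(2)$; the paper leaves this implicit.
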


\subsection{Proof of Theorems \ref{thm:correspondence} and \ref{thm:corr}}
\label{subsec:pf}
With the help of Lemma \ref{lem:evendegree}, we could now prove Theorem \ref{thm:correspondence}.
\begin{proof}[Proof of Theorem \ref{thm:correspondence}]

Suppose that $g$ is a cone spherical metric on $X$ representing an effective divisor $D = \sum_{j=1}^n \beta_j \, p_j$. Then, by the argument before Corollary \ref{cor:criterion}, we could obtain a branched projective covering $\{U_{\alpha}, f_{\alpha}\}$ with ramification divisor $D$ such that $g|_{U_\alpha}=f^*_\alpha\, g_{\rm st}$ and an indigenous bundle $P$ associated to this covering such that the transition functions $f_{\alpha\beta}$ lie in ${\rm PSU(2)}$. Moreover, $\{U_{\alpha}, f_{\alpha}\}$ defines a section $s$ of $P$ which is not locally flat.

Then we show that the equivalence class of $(P, s)$ does \emph{not} depend on the choice of $\{U_{\alpha}, f_{\alpha}\}$. At first, we know that the monodromy representation $\rho \colon \pi_{1}(X) \to {\rm PSU}(2)$ of a developing map of $g$ is unique up to conjugation. By the one-to-one correspondence between the set of representations of $\pi_{1}(X)$ and that of flat fiber bundles, we conclude that $P$ is unique. If $\{U_{\alpha}, f_{1, \alpha}\}$ and $\{U_{\alpha}, f_{2, \alpha}\}$ are two sections of $P$ obtained by $g$, then $f_{1,\alpha}^*\, g_{\rm st} = g|_{U_\alpha}= f_{2,\alpha}^*\, g_{\rm st}$. Hence \cite[Section 2]{CWWX2015}
\[ f_{1, \alpha} = T_{\alpha} \cdot f_{2, \alpha} = \frac{a_{\alpha}f_{2, \alpha} + b_{\alpha} }{-\bar{b}_{\alpha} f_{2, \alpha} + \bar{a}_{\alpha}}, \; \text{where }a_{\alpha}, b_{\alpha} \in \mathbb{C}\text{ and }|a_{\alpha}|^{2} + |b_{\alpha}|^{2} = 1. \]
Then we could obtain that
\[ (T_{\alpha} \circ f_{\alpha \beta})(f_{2, \beta}) = (f_{\alpha \beta} \circ T_{\beta})(f_{2, \beta}) \]
on $U_{\alpha} \cap U_{\beta}$. Note that $f_{2, \beta}$ is not a constant map. We have
\[ T_{\alpha} \circ f_{\alpha \beta} = f_{\alpha \beta} \circ T_{\beta}. \]
That is, $\{U_{\alpha}, f_{1, \alpha}\}$ and $\{U_{\alpha}, f_{2, \alpha}\}$  only differ by a unitary flat automorphism of $P$ defined by $\{T_{\alpha}\}$.

Let $(P, s)$ be a pair on the right-hand side of the correspondence, and let $\{U_{\alpha}, s_{\alpha}\}$ be the local expression of the section $s$. Then on any intersection $U_{\alpha} \cap U_{\beta}$, the local expressions satisfy $ s_{\alpha} = f_{\alpha\beta} \cdot s_{\beta}$, where the transition functions $f_{\alpha\beta}$ lie in ${\rm PSU}(2)$. Defining $g_{\alpha} = s_{\alpha}^*g_{\rm st}$ on each $U_{\alpha}$, we find $g_{\alpha}|_{U_{\alpha}\cap U_{\beta}} = g_{\beta}|_{U_{\alpha}\cap U_{\beta}}$ since $f_{\alpha\beta} \in {\rm PSU(2)}$. Hence, the metrics $g_{\alpha}$ on $U_\alpha$ define a global cone spherical metric $g = s^*g_{\rm st}$ representing an effective divisor on $X$. It is obvious that if $(P, s_{1})$ and $(P, s_{2})$ are equivalent, then $s_{1}^*g_{\rm st}= s_{2}^*g_{\rm st}$.

Therefore, by the above two paragraphs, we complete the proof of the one-to-one correspondence in the theorem.

Since $X$ is a compact Riemann surface, there exists a rank two holomorphic vector bundle $E$ on $X$ such that $P = \mathbb{P}(E)$. By the results of Narasimhan and Seshadri \cite{NS1965}, we know that $E$ is stable if $g$ is irreducible (so is $\rho$). By Lemma \ref{lem:evendegree}, we conclude that the degree of  the ramification divisor represented by $g$ and $\deg E$ have the same parity.

At last, suppose $g$ is a reducible metric on $X$. Then the monodromy representation $\rho \colon \pi_1(X) \rightarrow {\rm PSU(2)}$ of $P$ can be lifted to $\tilde{\rho}\colon \pi_1(X) \rightarrow {\rm SU}(2)$. By Lemma 4.1 in \cite{CWWX2015}, we have,  up to a conjugation,
\[ \tilde{\rho}(\pi_1(X)) \subset \left\{\text{diag}(e^{\sqrt{-1} \; \theta}, e^{ -\sqrt{-1} \; \theta}) \mid \theta \in \mathbb{R}\right\} \subset {\rm SU(2)}. \]
It follows that there exist two unitary flat line bundles $J_1$ and $J_2$ on $X$ such that $E = J_1 \oplus J_2$ and $J_1 = J_2^{-1}$. We  complete the proof of Theorem \ref{thm:correspondence}.
\end{proof}
From the proof of Theorem \ref{thm:correspondence}, we know that
\begin{rem}
\label{rem:divisorequ}
If $(P, s)$ is a pair in Theorem \ref{thm:correspondence}, then $P$ is an indigenous bundle on $X$. Moreover, the effective divisor represented by $g$ corresponding to $(P, s)$ coincides with the ramification divisor of the branched projective covering associated to $(P, s)$.
\end{rem}
In order to complete the proof of Theorem \ref{thm:corr}, we need the following algebraic fact which is motivated by a note of A. Beauville \cite{Beau2000}.
\begin{prop}
\label{prop:directimage}
Let $X$ be a compact Riemann surface of genus $g_X\geq 1$ and $T$ an $r$-torsion line bundle on $X$. Let $\pi \colon \widehat{X} \to X$ be the degree $r$ cyclic \'etale covering associated to $T$. Then a rank $r$ vector bundle $E$ on $X$ satisfies $E \cong T \otimes E $ if and only if $E\cong \pi_{*}L$ for some line bundle $L$ on $\widehat{X}$.
\end{prop}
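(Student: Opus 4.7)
The plan is to settle the backward direction by the projection formula and reduce the forward direction to the classical spectral-cover/$\mathrm{Spec}_{X}$ dictionary, where the decisive algebraic input is the vanishing of intermediate symmetric functions coming from the torsion nature of $T$.

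\textbf{Paragraph 1 (backward direction and setup).} The backward implication is immediate: since $\pi^{*}T \cong \mathcal{O}_{\widehat{X}}$ by the very construction of the cyclic \'etale cover associated to $T$, the projection formula gives $\pi_{*}L \otimes T \cong \pi_{*}(L \otimes \pi^{*}T) \cong \pi_{*}L$ for every line bundle $L$ on $\widehat{X}$, and $\pi_{*}L$ has rank $r$ since $\pi$ is finite \'etale of degree $r$. For the converse, I would adopt the spectral viewpoint: fixing a trivialization $T^{\otimes r} \cong \mathcal{O}_{X}$ identifies $\widehat{X}$ with $\mathrm{Spec}_{X}(\mathcal{A})$, where $\mathcal{A} := \bigoplus_{i=0}^{r-1} T^{-i}$ is the $\mathcal{O}_{X}$-algebra whose multiplication is induced by this trivialization. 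Under this identification, line bundles on $\widehat{X}$ correspond bijectively to $\mathcal{A}$-modules on $X$ that are locally free of rank one over $\mathcal{A}$, and the pushforward $\pi_{*}$ simply forgets the $\mathcal{A}$-structure. Since $\mathcal{A}$ is generated as an $\mathcal{O}_{X}$-algebra by the summand $T^{-1}$, equipping $E$ with an $\mathcal{A}$-action amounts to giving an $\mathcal{O}_{X}$-linear map $\phi \colon E \to T \otimes E$ whose $r$-fold iterate $\phi^{\circ r} \colon E \to T^{\otimes r} \otimes E \cong E$ equals $\mathrm{id}_{E}$.

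\textbf{Paragraph 2 (normalization of $\phi$).} To construct such a $\phi$, I would pick any isomorphism $\phi_{0} \colon E \to T \otimes E$ supplied by the hypothesis and view it as a $T$-twisted endomorphism of $E$. Its characteristic polynomial has the form
\[ \det\big(t \cdot \mathrm{id} - \phi_{0}\big) \;=\; t^{r} - s_{1}\,t^{r-1} + s_{2}\,t^{r-2} - \cdots + (-1)^{r}\,s_{r}, \qquad s_{i} \in H^{0}\big(X, T^{\otimes i}\big). \]
Since $T$ has order exactly $r$ on a curve of positive genus, each $T^{\otimes i}$ with $0 < i < r$ is a nontrivial degree-zero line bundle and therefore has no global sections; thus $s_{1} = \cdots = s_{r-1} = 0$, whereas $s_{r} = \det \phi_{0}$ is a nonzero constant in $H^{0}(X, T^{\otimes r}) \cong \mathbb{C}$. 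By Cayley--Hamilton, $\phi_{0}^{\circ r}$ is then a scalar multiple of $\mathrm{id}_{E}$, and a single complex rescaling $\phi := \lambda \phi_{0}$ with $\lambda^{r}$ an appropriate constant produces $\phi^{\circ r} = \mathrm{id}_{E}$ and characteristic polynomial $t^{r} - 1$.

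\textbf{Paragraph 3 (recognition as a line bundle).} With this $\phi$, the $\mathcal{A}$-module $(E, \phi)$ corresponds to a coherent sheaf $\mathcal{L}$ on $\widehat{X}$ with $\pi_{*}\mathcal{L} \cong E$ by construction. To see that $\mathcal{L}$ is a line bundle, pull back to $\widehat{X}$ and trivialize $\pi^{*}T$: then $\pi^{*}\phi$ is a genuine endomorphism of $\pi^{*}E$ with characteristic polynomial $\prod_{\zeta \in \mu_{r}}(t - \zeta)$, whose simple roots force $\pi^{*}\phi$ to be semisimple and globally split $\pi^{*}E \cong \bigoplus_{\zeta \in \mu_{r}} L_{\zeta}$ into rank-one eigen-subbundles. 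The Galois group cyclically permutes the $L_{\zeta}$, so I would set $L := L_{1}$; applying Krull--Schmidt to the two expressions
\[ \pi_{*}\pi^{*}E \;\cong\; E \otimes \mathcal{A} \;\cong\; E^{\oplus r} \quad\text{and}\quad \pi_{*}\pi^{*}E \;\cong\; \bigoplus_{\zeta \in \mu_{r}}\pi_{*}L_{\zeta} \;\cong\; (\pi_{*}L)^{\oplus r} \]
then yields $\pi_{*}L \cong E$. The principal technical obstacle is the passage from some abstract isomorphism $\phi_{0}$ to one satisfying $\phi^{\circ r} = \mathrm{id}_{E}$; this is dissolved precisely by the vanishing $H^{0}(X, T^{\otimes i}) = 0$ for $0 < i < r$, which collapses the characteristic polynomial to a scalar-shifted $t^{r}$ and reduces the adjustment to an $r$-th root of a single nonzero complex number.
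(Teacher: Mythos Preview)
Your proof is correct and shares the paper's overall architecture: both identify $\widehat{X}$ with $\mathrm{Spec}_X$ of the algebra $\mathcal{A}=\bigoplus_i T^{i}$, dispose of the backward direction via the projection formula, and for the forward direction promote the given isomorphism $E\cong T\otimes E$ to an $\mathcal{A}$-module structure on $E$. The differences lie in how each handles the two nontrivial points. First, the paper simply fixes an isomorphism $f\colon T\otimes E\to E$, iterates it to maps $f^{(i)}$, and asserts without further comment that the resulting $\widetilde f\colon\mathcal{A}\otimes E\to E$ is an algebra action; for associativity one in fact needs the $r$-fold iterate to coincide with the chosen trivialization $T^{r}\cong\mathcal{O}_X$, and your Paragraph~2 supplies precisely this missing step --- the vanishing $H^0(X,T^{\otimes i})=0$ for $0<i<r$ (using that $T$ has exact order $r$, implicit in the hypothesis that the associated cover is connected) together with Cayley--Hamilton forces $\phi_0^{\circ r}$ to be scalar, after which one rescales. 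Second, to see that the sheaf on $\widehat X$ is a line bundle, the paper argues it is torsion-free (any torsion would push forward to torsion in the locally free $E$) and hence invertible of the correct rank; you instead diagonalize $\pi^{*}\phi$ into eigen-line-bundles upstairs and conclude $\pi_*L\cong E$ via Krull--Schmidt on $E^{\oplus r}\cong(\pi_*L)^{\oplus r}$. This works, but it is heavier machinery, and it in fact makes the intermediate construction of $\mathcal{L}$ as an $\mathcal{A}$-module superfluous --- your eigenbundle $L=L_1$ together with Krull--Schmidt already gives the conclusion directly. In sum: your normalization paragraph is a genuine addition over the paper's treatment, while the paper's torsion-freeness check is the more economical route to invertibility.
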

\begin{proof}
If we define
\[ \mathcal{L} = \mathcal{O}_{X} \oplus T \oplus T^{2} \oplus \cdots \oplus T^{r-1}, \]
then $\mathcal{L}$ is a sheaf of $\mathcal{O}_{X}$-algebra under the isomorphism $T^{r} \cong \mathcal{O}_{X}$ and
\[ \pi \colon \widehat{X} = \operatorname{Spec }\mathcal{L} \to \operatorname{Spec } \mathcal{O}_{X} = X \]
is the cyclic \'etale covering associated to $T$ together with $\pi_{*} \mathcal{O}_{\widehat{X}} \cong  \mathcal{L}$.

Let $L$ be a line bundle on $\widehat{X}$. We have
\begin{align*}
T \otimes_{\mathcal{O}_{X}} \pi_{*}L &\cong T \otimes_{\mathcal{O}_{X}} \pi_{*} \mathcal{O}_{\widehat{X}} \otimes_{\pi_{*} \mathcal{O}_{\widehat{X}}} \pi_{*}L \\
&\cong (T \otimes_{\mathcal{O}_{X}} \mathcal{L}) \otimes_{\pi_{*} \mathcal{O}_{\widehat{X}}} \pi_{*}L \\
&\cong \mathcal{L} \otimes_{\pi_{*} \mathcal{O}_{\widehat{X}}} \pi_{*}L \\
&\cong \pi_{*}L.
\end{align*}

On the other hand, if $E$ is a vector bundle of rank $r$ with $T \otimes_{\mathcal{O}_{X}} E \cong E$ on $X$, then we fix an isomorphism $f \colon T \otimes_{\mathcal{O}_{X}} E \to E$ and define
\begin{align*}
f^{(0)} &\colon \mathcal{O}_{X} \otimes_{\mathcal{O}_{X}} E \to E, \\
f^{(1)} &\colon T \otimes_{\mathcal{O}_{X}} E \xrightarrow{f} E, \\
f^{(i)} &\colon T^{i} \otimes_{\mathcal{O}_{X}} E \cong T \otimes_{\mathcal{O}_{X}} (T^{i-1} \otimes_{\mathcal{O}_{X}} E ) \xrightarrow{\operatorname{Id}_{T} \otimes f^{(i-1)}} T \otimes_{\mathcal{O}_{X}} E \xrightarrow{f} E.
\end{align*}
Hence $f^{(i)} \, (0 \leq i \leq r-1)$ are isomorphisms. Now we could define a homomorphism of $\mathcal{O}_{X}$-modules
\begin{align*}
\widetilde{f} \colon \mathcal{L} \otimes_{\mathcal{O}_{X}} E &\to E, \\
(a_{0}, a_{1}, \cdots, a_{r-1}) \otimes s &\mapsto \sum_{i=0}^{r-1} f^{(i)}(a_{i} \otimes s),
\end{align*}
where $s$ is a section of $E$ and $a_{i}$ is a section of $T^{(i)}$ for $0 \leq i < r$. This homomorphism endows $E$ with an $\mathcal{L}$-module structure. Denote this $\mathcal{L}$-module by $L$ which could be viewed as a sheaf on $\widehat{X}$. Then we have $\pi_{*}L \cong E$. It is obvious that $L$ is of rank one if we could prove it is a locally free $\mathcal{O}_{\widehat{X}}$-module.

Let $S$ be the maximal torsion subsheaf of $L$. Note that $\widehat{X}$ is a compact Riemann surface. Hence, there exist $p_{1}, p_{2}, \cdots, p_{N} \in \widehat{X}$ such that
\[ S \cong \bigoplus_{i=1}^{N} \mathbb{C}_{p_{i}},  \]
where $\mathbb{C}_{p_{i}}$ is a sky-scraper sheaf on $\widehat{X}$. Then $\pi_{*}S = \bigoplus_{i=1}^{N}\mathbb{C}_{\pi(p_{i})}$ is a subsheaf of $\pi_{*}L \cong E$. We obtain that $N = 0$ since $E$ is a locally free $\mathcal{O}_{X}$-module. Therefore $L$ is torsion-free, which implies it is locally free.
\end{proof}

At the very end of this section, we give the proof of Theorem \ref{thm:corr} in what follows, where we use the language about stable embeddings.  Two embeddings $L \xrightarrow{i} E$ and $L' \xrightarrow{i'} E'$ are called {\it equivalent} if there exists a line bundle $N$ on $X$ with the following commutative diagram
\[ \xymatrix{
L \otimes N \ar@{=}[d] \ar[r]^{i \otimes \operatorname{id}_{N}} & E \otimes N \ar@{=}[d] \\
L'                \ar[r]^{i'}                   & E'.
} \]

\begin{proof}[Proof of Theorem \ref{thm:corr}]
Let $E$ be a vector bundle of rank $2$ on a Riemann surface $X$. Then we can think of a section $s \colon X \to \mathbb{P}(E)$ as a line subbundle of $E$ by taking the preimage of $s$ in $E$. On the other hand, if $L$ is a line subbundle of $E$, then $L$ defines a section $s_{(L, E)}$ of $\mathbb{P}(E)$ by $x \mapsto \left[L|_{\{x\}}\right] \in \mathbb{P}(E)$. Moreover, two equivalent embeddings $(L, E)$ and $(L',\, E')$ give the same pair $(P,\,s)$.
Suppose that $E$ is a rank two stable vector bundle on $X$ and $L$ is a line subbundle of $E$. Then the projective bundle $\mathbb{P}(E)$ is a projective unitary flat $\mathbb{P}^1$-bundle. The section $s_{(L, E)}$ of $\mathbb P(E)$ is not locally flat by Lemma \ref{lem:nonflat}. According the correspondence of Theorem \ref{thm:correspondence}, we could construct an irreducible metric from a stable embedding $L \to E$, and all irreducible metrics representing effective divisors arise in this way. Hence we obtain a canonical surjective map from $\mathcal{SE}(X)$ to $\mathcal{MI}(X,\,{\Bbb Z})$, denoted by $\sigma$.

\begin{enumerate}

\item Let $D$ be the effective divisor represented by the cone spherical metric given by an embedding $L \to E$. Recall that $(\mathbb P(E), s_{(L, E)})$ defines a branched projective covering on $X$ whose ramification divisor coincides with $D$. Hence, by the first conclusion in Lemma \ref{lem:evendegree}, we obtain that $D$ lies in the complete linear system $\left| L^{-2} \otimes \det E \otimes K_{X} \right|$, from which the first assertion about $\sigma$ holds.

\item Then we prove the second assertion of $\sigma$ in Theorem \ref{thm:corr}. Actually, there exists a one-to-one correspondence between equivalence classes of embeddings $L \to E$ and pairs $\big({\Bbb P}(E),\,s\big)$ consisting of a $\mathbb{P}^{1}$-bundle $\mathbb{P}(E)$ and a section $s$ of ${\Bbb P}(E)$.
 Choose an irreducible metric $g$ representing an effective divisor and take a stable embedding $L\to E$ defining $g$.
 Hence, by Theorem \ref{thm:correspondence}, the preimage $\sigma^{-1}(g)$ of $g$ coincides with the orbit through the section $s_{(L,E)}$  with respect to the  free $\operatorname{Aut}_{X}^{u}(\mathbb{P}(E))$-action on the space $\Gamma\big(X, {\Bbb P}(E)\bigr)$ of sections of ${\Bbb P}(E)$, which will be explicitly given in next paragraph.
 A result of Grothendieck \cite[Section 5]{Gro1958} says that there exists an exact sequence of groups
\[ 1 \to \operatorname{Aut}_{X}^{h}(E)/ \Gamma(X, \mathcal{O}_{X}^{*}) \to \operatorname{Aut}_{X}^{h} (\mathbb{P}(E)) \to \varXi \to 1, \]
where $\varXi = \left\{ T \in H^{1}(X, \mathcal{O}_{X}^{*}) \mid E \cong E \otimes T \right\}$  is a subgroup of the group of $2$-torsions in ${\rm Pic}^{0}(X)$. Since $X$ is a compact Riemann surface and $E$ is stable, we obtain
$\operatorname{Aut}_{X}^{h} (\mathbb{P}(E)) \cong \varXi.$
Hence we have
\[ \left| \operatorname{Aut}_{X}^{u} (\mathbb{P}(E)) \right| \leq \left| \operatorname{Aut}_{X}^{h} (\mathbb{P}(E)) \right| \leq 2^{2g_{X}}, \]
which implies $|\sigma^{-1}(g)|\leq 2^{2g_X}$. Hence we complete the proof of the second assertion.

Moreover, we choose $T$ in $\operatorname{Aut}_{X}^{u} (\mathbb{P}(E))$ and denote the isomorphism by $\eta \colon E \cong E \otimes T$. Then, under the action of $T$, the embedding $L \to E$ is mapped to another one given by $\eta(L) \otimes T^{-1} \to E$, which shows that the $\operatorname{Aut}_{X}^{u} (\mathbb{P}(E))$-action on $\Gamma\big({\Bbb P}(E)\big)$ is free.
If $E \cong E \otimes T$ for some $T \in \varXi\backslash\{e\}$, by Proposition \ref{prop:directimage},
$E$ must be the direct image of some line bundle over $\widetilde X$ by the double et\' ale covering $\pi_T:\widetilde X\to X$ associated to $T$. Hence, the moduli space of such $E$'s has dimension $\leq g_{\widetilde X}=2g_{X} - 1$. Therefore, for a generic stable vector bundle $E$ on $X$ of genus $g_{X} \geq 2$, we have $\operatorname{Aut}_{X}^{u} (\mathbb{P}(E)) \subset \operatorname{Aut}_{X}^{h} (\mathbb{P}(E)) =\{e\}$, i.e. $\operatorname{Aut}_{X}^{u} (\mathbb{P}(E)) =\{e\}$.

\item To show the third assertion of $\sigma$, we firstly make

\vspace{5pt}
{\bfseries Claim 1:} {\it The equivalence class of a stable embedding $L \to E$ is {\it generic} in the stratum $\mathcal{SE}_k(X)$ if $E$ is a generic stable vector bundle of rank two and the positive index $k=\deg\,E - 2\deg\, L > 10g_{X} - 5$}.
\vspace{5pt}

Actually, $E$ is indecomposable since it is stable. Then, by a theorem of Atiyah \cite[Theorem 1]{Atiyah57II}, we know that there exists a positive integer
\[ N = - \frac{\deg E}{2} + 5g_{X} - 2, \]
such that for any line bundle $L$ with $\deg L \leq - N$ and any rank $2$ stable vector bundle $E$, $L^{-1} \otimes E$ is {\it ample in the sense of Atiyah} \cite[p.417]{Atiyah57II}, i.e. {\it $L^{-1} \otimes E$ is globally generated and $ H^{q}(X, L^{-1} \otimes E) = 0$ for all $q > 0$}. Moreover, $\mathcal{O}_{X}$ is a line subbundle of $L^{-1} \otimes E$ \cite[Theorem 2]{Atiyah57II}, i.e. there exists an embedding $L\to E$ if $k = \deg E - 2 \deg L > 10g_{X} - 5$. Hence we proved the claim.

Since $L^{-1} \otimes E$ is ample in the sense of Atiyah op.cit., we have $ H^{q}(X, L^{-1} \otimes E) = 0$ for all $q > 0$. By the Riemann-Roch formula, we find that
\[ \dim H^{0}(X, L^{-1} \otimes E) = 2(1 - g_{X}) + (\deg E - 2 \deg L), \]
monotonically increases to infinity as the index $k=(\deg E - 2 \deg L)\to\infty$.
If $L^{-1} \otimes E$ is ample, then we make

\vspace{5pt}
{\bfseries Claim 2}: {\it If $s \in H^{0}(X, L^{-1} \otimes E)$ is a generic section, then $s$ is nowhere vanishing and gives a stable embedding $L\to E$.}
\vspace{5pt}

In fact, since $L^{-1}\otimes E$ is ample in the sense of Atiyah op.cit., we have that $L^{-1}\otimes E$ is globally generated, i.e. $$H^{0}(X, L^{-1} \otimes E) \xrightarrow{\operatorname{ev}_{x}} E|_{\{x\}},\; s \mapsto s(x)$$ is an epimorphism for each $x \in X$. Let $K_{x} = \ker (\operatorname{ev}_{x})$, and let $S$ be the subvariety of  $H^{0}(X, L^{-1} \otimes E)$ generated by all $K_{x}$'s. Then we have
\begin{eqnarray*}
\dim S &\leq& \Big(\dim H^{0}(X, L^{-1} \otimes E) - \dim E|_{\{x\}}\Big) + \dim X \\&=&
 \dim H^{0}(X, L^{-1} \otimes E) - 1.
 \end{eqnarray*}
Hence, if $s \in H^{0}(X, L^{-1} \otimes E)$ is a generic section, then $s \notin S$, i.e. $s$ is a nowhere vanishing section, which induces an embedding of $L$ into $E$ since $\mathcal{H}om(L,\,E)\cong L^{-1}\otimes E$.
Hence we complete the proof of the second claim.

By the first claim,  the forgetful map $(L\to E)\mapsto E$ gives a surjective map from ${\mathcal {SE}}_k(X)$ onto the moduli space of rank two stable vector bundles if the index $k > 10g_{X} - 5$. Hence, as the index $k\to\infty$, the dimension of ${\mathcal {SE}}_k(X)$  monotonically increases to infinity.  Recall that for a generic stable vector bundle $E$ on $X$ of genus $g_{X} \geq 2$, we have that $\operatorname{Aut}_{X}^{u} (\mathbb{P}(E)) =\operatorname{Aut}_{X}^{h} (\mathbb{P}(E))=\{e\}$, which implies that an irreducible metric $g$ given by a stable embedding $L\to E$ has a unique pre-image, i.e. there exists a unique equivalence class of the stable extension $0\to L\to E\to E/L\to 0$ defining $g$.
Summing up this and the preceding two claims, we find that, as $k > 10g_{X} - 5$, the restriction of $\sigma$ to some Zariski open subset
of ${\mathcal {SE}}_k(X)$  is injective.
Therefore, we complete the proof of the theorem.

\end{enumerate}
\end{proof}

\section{A Lange-type theorem}
\label{sec:langetype}
A part of Theorem \ref{thm:stable} is contained in \cite[Proposition 1.1]{LN83} which Lange and Narasimhan proved by counting the dimensions of secant varieties. We give a self-contianed proof for the whole of the theorem in this section.

Let $L$ be a line bundle of degree $d > 2$ on a compact Riemann surface $X$ of genus $g_{X} \geq 1$. Let $\mathcal{R} = \mathbb{P} \big( \operatorname{Ext}_{X}^{1}(L, \mathcal O_{X})\big)$ be the variety parameterizing the equivalence classes of nontrivial extensions of $L$ by $\mathcal O_{X}$. By the Serre duality and the Riemann-Roch theorem, we have
\begin{align*}
\dim \mathcal{R} &= \dim \operatorname{Ext}_{X}^{1}(L, \mathcal O_{X}) -1 \\
&= \dim H^{1}(X, L^{-1}) - 1 \\
&= d + g_{X} - 2.
\end{align*}
On the other hand, for any given nontrivial extension of $L$ by $\mathcal O_{X}$
\[ {\Bbb E}:  0 \rightarrow \mathcal O_{X} \rightarrow E \rightarrow L \rightarrow 0. \]
We obtain a nonzero element ${\Bbb E} \in \operatorname{Ext}_{X}^{1}(L, \mathcal O_{X}) \cong H^{1}(X, L^{-1})$. Consider the variety of hyperplanes in $H^{0}(X, K_{X} \otimes L)$, which is isomorphic to $\mathbb{P}^{N}$ with $N =(d + g_{X} - 2)$.  By Serre duality, the annihilating space of $\mathbb E$ is a hyperplane in $H^{0}(X, K_{X} \otimes L)$,  we denote it by $[{\Bbb E}] \in \mathbb{P}^{N}$. Then we obtain a one-to-one correspondence between $\mathcal{R}$ and $\mathbb{P}^{N}$.

For each $p \in X$, we have
\[ h^{1} \big(X, K_{X} \otimes L \otimes \mathcal{O}_{X}(-p)\big) = h^{0}\big(X, L^{-1} \otimes \mathcal{O}_{X}(p)\big) = 0. \]
Then the following map
\[ \phi \colon X \rightarrow \mathbb{P}^{N}, \quad \phi(p) = \left\{ \sigma \in H^{0}(X, K_{X} \otimes L) \mid \sigma(p) = 0\right\}, \]
is well defined. Moreover, $\phi$ is an embedding since
\[ h^{1}\big(X, K_{X} \otimes L \otimes \mathcal{O}_{X}(-p - q)\big) = 0,\; \forall p,q \in X. \]
Let Sec$_{k}(X)$ be the $k^{th}$ secant variety of $\phi(X)$,  i.e. the Zariski closure of the union of the linear spaces spanned by $k$ generic points in $\phi(X)$. Hence, $\dim \operatorname{Sec}_{k}(X) \leq 2k - 1$. In particular, Sec$_{1}(X) = \phi(X)$.

\begin{lem}
\label{lem:secant}
Let $X$ be a compact Riemann surface of genus $g_{X} \geq 1$, and $L$ a line bundle of degree $d > 2$. Suppose that
${\Bbb E}:  0 \rightarrow \mathcal O_{X} \rightarrow E \rightarrow L \rightarrow 0$
is a nontrivial extension of $L$ by $\mathcal O_{X}$. Then $[{\Bbb E}] \in \operatorname{Sec}_{k}(X)$ if and only if there exist $k$ points $p_{1}, p_{2}, \cdots, p_{k}$ in $X$ such that ${\Bbb E}$ is contained in the kernel of the map
\[ H^{1}(X, L^{-1}) \rightarrow H^{1}\big(L^{-1} \otimes \mathcal{O}_{X}(p_{1} + \cdots + p_{k})\big), \]
which is induced from the natural map $ H^{0}\big(X, K_{X} \otimes L \otimes \mathcal{O}_{X}(-p_{1} - \cdots -p_{k}) \big) \rightarrow H^{0}\big(X, K_{X}\otimes L \big)$ by Serre duality.
\end{lem}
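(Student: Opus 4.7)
The plan is to translate the secant condition into cohomological data via Serre duality. First, identify $\mathbb{P}^N$ with $\mathbb{P}\bigl(H^1(X, L^{-1})\bigr)$ through the Serre pairing $H^1(X, L^{-1}) \times H^0(X, K_X \otimes L) \to \mathbb{C}$, so that a class $[\mathbb{E}]$ corresponds to its annihilator hyperplane in $H^0(X, K_X \otimes L)$. Under this identification, $\phi(p)$ is precisely the hyperplane of sections vanishing at $p$.

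For an effective divisor $D = p_1 + \cdots + p_k$, let $V_D \subseteq H^0(X, K_X \otimes L)$ denote the image of the natural inclusion $H^0\bigl(X, K_X \otimes L \otimes \mathcal{O}_X(-D)\bigr) \hookrightarrow H^0(X, K_X \otimes L)$, i.e., the subspace of sections vanishing along $D$ with multiplicities. When the $p_i$ are distinct, $V_D = \bigcap_{i=1}^k \phi(p_i)$, so its Serre-annihilator $\Lambda_D := \mathbb{P}(V_D^{\perp}) \subset \mathbb{P}^N$ is exactly the projective linear span of $\phi(p_1), \ldots, \phi(p_k)$. The cohomological key step is that Serre duality applied to the inclusion $L^{-1} \hookrightarrow L^{-1}(D)$ realises the induced map $H^1(L^{-1}) \to H^1(L^{-1}(D))$ as the dual of $H^0(K_X \otimes L(-D)) \hookrightarrow H^0(K_X \otimes L)$; hence its kernel equals $V_D^{\perp}$. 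Therefore $[\mathbb{E}] \in \Lambda_D$ if and only if $\mathbb{E} \in \ker\bigl(H^1(L^{-1}) \to H^1(L^{-1}(D))\bigr)$, which is exactly the cohomological condition appearing in the lemma.

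To finish, I would establish the equality $\operatorname{Sec}_k(X) = \bigcup_{D \in \operatorname{Sym}^k(X)} \Lambda_D$. Consider the incidence variety
\[ I := \bigl\{ (D, [\mathbb{E}]) \in \operatorname{Sym}^k(X) \times \mathbb{P}^N : \mathbb{E} \in V_D^{\perp} \bigr\}, \]
which is algebraic (cut out by the vanishing of Serre pairings against the tautological family of subspaces parameterised by $\operatorname{Sym}^k(X)$) and closed. Its second projection $\pi_2(I)$ is therefore closed in $\mathbb{P}^N$ because $\operatorname{Sym}^k(X)$ is projective; by the previous paragraph it contains the projective span of every $k$-tuple of distinct points in $\phi(X)$, and hence contains $\operatorname{Sec}_k(X)$. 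For the reverse inclusion, reduced divisors are dense in $\operatorname{Sym}^k(X)$, and for reduced $D_n \to D$ the subspaces $\Lambda_{D_n}$ (each lying in $\operatorname{Sec}_k(X)$) accumulate onto a limit containing $\Lambda_D$; the closedness of $\operatorname{Sec}_k(X)$ then forces $\Lambda_D \subseteq \operatorname{Sec}_k(X)$. The main obstacle I anticipate is making this limiting argument rigorous at non-reduced $D$, where $\dim V_D$ can jump upwards; this is controlled by the elementary dimension bound $\dim V_D \geq h^0(K_X \otimes L) - k$, which forces $\dim \Lambda_D \leq k-1$ on all of $\operatorname{Sym}^k(X)$ and prevents the family from degenerating outside $\operatorname{Sec}_k(X)$. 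Combining the two inclusions yields the lemma.
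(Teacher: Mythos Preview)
Your proposal is correct and follows essentially the same route as the paper: both arguments identify the hyperplane $[\mathbb{E}]$ with the annihilator of $\mathbb{E}$ under the Serre pairing, recognise $\phi(p_1)\cap\cdots\cap\phi(p_k)$ (with multiplicities) as the image of $H^0(K_X\otimes L\otimes\mathcal{O}_X(-D))\to H^0(K_X\otimes L)$, and then dualise. The paper's proof simply asserts the equality $\operatorname{Sec}_k(X)=\bigcup_{D\in\operatorname{Sym}^k(X)}\Lambda_D$ ``by definition of $\operatorname{Sec}_k(X)$'' and the multiplicity convention, whereas you supply a genuine justification via the incidence variety $I\subset\operatorname{Sym}^k(X)\times\mathbb{P}^N$ and properness of the first projection; this is a real improvement in rigour, since a point of the Zariski closure need not \emph{a priori} lie on an honest secant plane, and your semicontinuity argument for $\Lambda_{D_n}\to\Lambda_D$ is exactly what closes that gap.
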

\begin{proof}
If $[{\Bbb E}] \in \operatorname{Sec}_{k}(X)$, then by the definition of $\operatorname{Sec}_{k}(X)$ we know that there exist $k$ points $p_{1}, \cdots, p_{k} \in X$ such that the subspace $\phi(p_{1}) \cap \phi(p_{2}) \cap \cdots \cap \phi(p_{k})$ is contained in the hyperplane $[{\Bbb E}]$, where the intersection is in the sense of counting multiplicity, that is, if $p_{1} = p_{2}$, then
\[ \phi(p_{1}) \cap \phi(p_{2}) = \{ \sigma \in H^{0}(X, K_{X} \otimes L) \mid \text{$p_{1}$ is a zero of $\sigma$ with multiplicity $2$} \}\]
Note that $\phi(p_{1}) \cap \phi(p_{2}) \cap \cdots \cap \phi(p_{k})$ is the image of the canonical homomorphism
\[ H^{0}\big(X, K_{X} \otimes L \otimes \mathcal{O}_{X}(-p_{1} - \cdots -p_{k}) \big) \rightarrow H^{0}\big(X, K_{X}\otimes L \big). \]
By using Serre duality, we know that ${\Bbb E}$ is contained in the kernel of the natural dual homomorphism
\[ H^{1}\big(L^{-1} \otimes \mathcal{O}_{X}(p_{1} + \cdots + p_{k})\big) \leftarrow H^{1}(X, L^{-1}), \]
and vice versa.
\end{proof}

\begin{rem}
\label{rem:sec1}
If $L$ is a line bundle of degree $2$, then $\phi$ is not necessarily an embedding. However {\rm Sec}$_{1}(X)$ is still well defined and Lemma \ref{lem:secant} is also valid for $k=1$.
\end{rem}

\begin{proof}[Proof of Theorem \ref{thm:stable}]
Let $L = L_{2} \otimes L_{1}^{-1}$ and $d = \deg L = d_{2} - d_{1} > 0$. It suffices to show that there exists a stable extension of $L$ by $\mathcal O_{X}$. By the Riemann-Roch theorem, we have
\[ \dim \operatorname{Ext}_{X}^{1} (L, \mathcal O_{X}) = \dim H^{1}(X, L^{-1}) = g_{X} - 1 + \deg L = g_{X} - 1 + d. \]
Suppose that $E$ is not a stable extension of $L$ by $\mathcal O_{X}$.
Then there exists a line subbundle $F$ of $E$ such that $\deg F \geq \lceil \frac{d}{2} \rceil > 0$, where $\lceil \frac{d}{2} \rceil$ be the minimal integer $\geq \frac{d}{2}$. Since any map from $F$ to $\mathcal O_{X}$ must be a zero map, looking at the following commutative diagram
\[\xymatrix{
& & F\ar[d]^{j} \ar[dr]^{\psi} & & \\
0\ar[r] & \mathcal O_{X} \ar[r]^{i} & E \ar[r] ^{p} & L \ar[r] & 0,
}\]
we can see that $\psi = p \circ j$ must be nonzero, which implies $\lceil \frac{d}{2} \rceil \leq \deg F \leq \deg L = d$.

Suppose that $\deg F = d - \ell $ with $0 \leq \ell \leq d - \lceil \frac{d}{2} \rceil $. Then $F \cong L \otimes \mathcal O_{X}(-p_{1} - \cdots - p_{\ell})$ for some points $p_{1}, \cdots, p_{\ell} \in X$. Consider the following commutative diagram of sheaves
\[ \xymatrix{
0 \ar[r] & \mathcal{H}om(L, \mathcal O_{X}) \ar@{->}[d] \ar[r] & \mathcal{H}om(L, E)\ar[d]\ar@{->}[d]  \ar[r] &\mathcal{H}om(L, L) \ar[d]\ar@{->}[d] \ar[r] &0 \\
0 \ar[r] & \mathcal{H}om(F, \mathcal O_{X}) \ar[r]                         & \mathcal{H}om(F, E) \ar[r]                                 &\mathcal{H}om(F, L) \ar[r]                                  &0,
} \]
where all the vertical homomorphisms are induced by $\psi$. Taking the long exact sequence of the cohomology with respect to the above short exact sequence of the sheaves, we have
\[ \xymatrix{
0 \ar[r] & Hom(L, \mathcal O_{X}) \ar@{->}[d] \ar[r] & Hom(L, E)\ar[d]\ar@{->}[d]  \ar[r] &Hom(L, L) \ar[d]^{\psi_{0}}\ar@{->}[d] \ar[r]^{\delta_{L}} & \operatorname{Ext}^{1}_{X}(L, \mathcal O_{X})\ar[d]^{\psi_{1}} \\
0 \ar[r] & Hom(F, \mathcal O_{X}) \ar[r]                         & Hom(F, E) \ar[r]                                 &Hom(F, L) \ar[r]^{\delta_{F}}                                  &\operatorname{Ext}^{1}_{X}(F, \mathcal O_{X}).
} \]
Then $\psi_{1} \circ \delta_{L} = \delta_{F} \circ \psi_{0}$ and $\psi = \psi_{0} ({\rm id}_{L})$. Note that $\delta_{F}(\psi) = 0$ since $\psi = p \circ j$. Hence $\psi_{1} ({\Bbb E}) = 0$, where ${\Bbb E} = \delta_{L} ({\rm id}_{L})$ is the extension of $L$ by $\mathcal O_{X}$, i.e. ${\Bbb E} \in \ker (\psi_{1})$. Since
\[ \operatorname{Ext}_{X}^{1}(L, \mathcal O_{X}) \cong H^{1}(X, L^{-1}), \quad \operatorname{Ext}_{X}^{1}(F, \mathcal O_{X}) \cong H^{1}(X, L^{-1} \otimes \mathcal O_{X}(p_{1} + \cdots + p_{\ell})), \]
$\mathbb{E}$ lies in the kernel of the canonical map
\[ H^{1}(X, L^{-1}) \to H^{1}(X, L^{-1} \otimes \mathcal O_{X}(p_{1} + \cdots + p_{\ell})), \]
i.e. $\mathbb{E}$ is contained in ${\rm Sec}_{\ell}(X)$ by Lemma \ref{lem:secant}. Moreover, the above map is surjective since
\[ H^{1} \left(X, \oplus_{i=1}^{\ell} \mathbb C_{p_{i}} \right) = 0.\]

\begin{itemize}
\item If $d =1$, then $\ell = 0$ and $\deg F = d = \deg L$. Hence, if $E$ is not a stable extension of $L$ by $\mathcal O_{X}$, then the extension is a holomorphic splitting, i.e. $E \cong \mathcal O_{X} \oplus L$. On the other hand, $\dim \operatorname{Ext}_{X}^{1} (L, \mathcal O_{X}) = g_{X} \geq 1$, which means each extension in $\operatorname{Ext}_{X}^{1} (L, \mathcal O_{X}) \setminus \{ 0 \}$ is stable.
\item If $d = 2$, then by Remark \ref{rem:sec1} and $\dim \mathcal R = d + g_{X} - 2 \geq 2 > \dim \operatorname{Sec}_{1}(X)$. We are done.
\item If $d \geq 3$, then by Lemma \ref{lem:secant} and the above argument, we know that all the unstable extensions of $L$ by $\mathcal O_{X}$ lie in the variety Sec$_{\lfloor \frac{d}{2} \rfloor}(X)$ in $\mathcal R$, where $\lfloor \frac{d}{2} \rfloor = d - \lceil \frac{d}{2} \rceil$. Note that
\[ \dim {\rm Sec}_{\lfloor \frac{d}{2} \rfloor}(X) \leq 2 {\left \lfloor \frac{d}{2} \right\rfloor} - 1 = \left\{ \begin{array}{ll}
d - 2 & \text{if $d$ is odd}, \\
d - 1 & \text{if $d$ is even}.
\end{array} \right. \]
Hence
\[ \dim \mathcal R - \dim {\rm Sec}_{\lfloor \frac{d}{2} \rfloor}(X) \geq \left\{ \begin{array}{ll}
g_{X} & \text{if $d$ is odd}, \\
g_{X} - 1 & \text{if $d$ is even}.
\end{array} \right. \]
Therefore, we complete the proof of Theorem \ref{thm:stable}. \qedhere
\end{itemize}
\end{proof}

\section{Ramification divisor maps}
\label{sec:ramidivmap}

Consider the following stable extension of two line bundles
\[ {\Bbb E}: \quad 0 \to L \xrightarrow{i} E \xrightarrow{p} M \to 0 \]
on a compact Riemann surface $X$ of genus $g_{X} > 0$ with a K\" ahler form $\omega_X$.
Since $E$ is stable, there exists on $E$ a unique Hermitian-Einstein metric $h$ up to scaling such that the corresponding Chern connection $D_{E}$ satisfies
\[ \Theta_{E} = D_{E} \circ D_{E} =  \lambda I, \quad\text{where}\quad \lambda= -\sqrt{-1} \,(\deg E) \,\omega_X. \]
As before, let $\bar{\partial}_{E}$ be the complex structure of $E$, and denote by $\mathcal{A}^{p,q}(E)$ the sheaf of smooth $E$-valued $(p, q)$-forms. Then we denote
\[ \partial_{E} := \left(D_{E} - \bar{\partial}_{E}\right) \colon \mathcal A^{0}(E) \to \mathcal A^{1,0}(E). \]
Since both $L$ and $M$ inherit hermitian structures from $(E,\, h)$, we could write the Chern connection $D_E$ on $(E,h)$ as
\[ D_{E} = \begin{pmatrix}
D_{L} &  - \beta \\
  \beta^{*_{\beta}}     &D_{M}
\end{pmatrix}, \]
where $\beta$ is a smooth $\operatorname{Hom}(M, L)$-valued $(0,1)$-form, and $\beta^{*_{\beta}}$ is the adjoint form of $\beta$ with respect to the hermitian metrics on both $L$ and $M$. Hence $\beta^{*_\beta}$ turns out to be a smooth $\operatorname{Hom}(L, M)$-valued $(1,0)$-form, which is called  the {\it second fundamental form} of $L$ in $E$.  Therefore, we could write $\partial_E$ as
\[ \partial_{E}= \begin{pmatrix}
\partial_{L} & 0 \\
  \beta^{*_{\beta}}     &\partial_{M}
\end{pmatrix}. \]
Recall the following commutative diagram in Subsection \ref{subsec:rami}
\[\xymatrix{
\mathcal A^{0}(L)\ar[r] ^-{i}\ar[drr]_-{\theta_{\Bbb E}} & \mathcal A^{0}(E)\ar[r] ^-{\partial_{E}} & E\otimes \mathcal{A}^{1,0}\ar[d]^-{p \otimes \operatorname{id}} \\
 & & M \otimes \mathcal{A}^{1,0}
}\]
where $p \otimes \operatorname{id}$ is induced by $E \rightarrow E/L \cong M$ and $\theta_{\Bbb E} = (p \otimes \operatorname{id}) \circ \partial_{E} \circ i$.
Then for any smooth section $s \colon X \to L$, we have $ \partial_{E}(i(s)) = i(\partial_{L}(s)) + \beta^{*_{\beta}}(s)$, which implies $(p\circ \operatorname{id})(\partial_{E}(i(s))) = \beta^{*_{\beta}}(s)$, that is, $\theta_{\mathbb E} = \beta^{*_{\beta}}$.

In order to prove Theorem \ref{thm:rami}, we need the following two lemmas.

\begin{lem}
\label{lem:flatcase}
Let $E$ be a rank two stable vector bundle on $X$ of genus $g_{X} \geq 1$ with $\det E = \mathcal O_{X}$, and let $L$ be a line subbundle of $E$. If $D$ is the effective divisor represented by the irreducible metric corresponding to the embedding $L \to E$, then $\theta_{\Bbb E}$ is ${\mathcal O}_X$-linear and its associated divisor $\operatorname{Div}(\theta_{\mathbb E})$ coincides with $D$.
\end{lem}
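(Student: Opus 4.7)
The plan is to exploit the hypothesis $\det E = \mathcal{O}_X$, which forces the Hermitian--Einstein constant $\lambda = -\sqrt{-1}(\deg E)\omega_X$ to vanish and makes the Chern connection $D_E$ genuinely flat (not merely projectively flat). Part (1) of the lemma will follow from a formal computation with the projection $p\colon E\to M$ and the curvature identity $F_E=\lambda\cdot\mathrm{Id}$, while part (2) will be obtained by an explicit local computation that uses the global meromorphic section of $E$ produced by Lemma~\ref{lem:evendegree}(2).

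To prove (1), observe that for any local holomorphic section $s_L$ of $L$ we have $p(s_L)=0\in M$ (since $s_L\in\ker p$) and $\bar\partial_E s_L=0$. Using the holomorphy of $p$ (so $p\circ\bar\partial_E=\bar\partial_M\circ p$) and $\partial_E^2=0$ on the Riemann surface $X$, the identity $F_E=\bar\partial_E\partial_E+\partial_E\bar\partial_E$ gives $F_E s_L=\bar\partial_E\partial_E s_L$. Applying $p$ and the Hermitian--Einstein equation,
\[ \bar\partial_M\!\big(\theta_{\mathbb{E}}(s_L)\big)=p\big(\bar\partial_E\partial_E s_L\big)=p(F_E s_L)=\lambda\,p(s_L)=0. \]
Thus $\theta_{\mathbb{E}}$ sends local holomorphic sections of $L$ to local holomorphic sections of $M\otimes K_X$, hence is $\mathcal{O}_X$-linear and defines a holomorphic section in $H^0(X,L^{-1}\otimes M\otimes K_X)$.

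To prove (2), Lemma~\ref{lem:evendegree}(2) provides a global meromorphic section $\mathfrak{s}=\{(\mathfrak{s}_{1,\alpha},\mathfrak{s}_{2,\alpha})\}$ of $E$, expressed in a flat local frame $\{e_{1,\alpha},e_{2,\alpha}\}$, such that $w_\alpha=\mathfrak{s}_{1,\alpha}/\mathfrak{s}_{2,\alpha}$ is a developing map of the cone spherical metric (hence has ramification divisor $D$), and $\mathfrak{s}$ is a meromorphic section of $L$ because $L$ is the line subbundle generated by $s=\{w_\alpha\}$. Flatness of $D_E$ in the flat frame gives $\partial_E\mathfrak{s}=d\mathfrak{s}_{1,\alpha}\,e_{1,\alpha}+d\mathfrak{s}_{2,\alpha}\,e_{2,\alpha}$, and near a point with $\mathfrak{s}_{2,\alpha}\neq 0$ the congruence $e_{2,\alpha}\equiv -w_\alpha e_{1,\alpha}\pmod{L}$ yields, upon projecting to $M=E/L$,
\[ \theta_{\mathbb{E}}(\mathfrak{s})=\big(d\mathfrak{s}_{1,\alpha}-w_\alpha\,d\mathfrak{s}_{2,\alpha}\big)\otimes [e_{1,\alpha}]=\mathfrak{s}_{2,\alpha}\,dw_\alpha\otimes [e_{1,\alpha}], \]
where $[e_{1,\alpha}]$ denotes the image of $e_{1,\alpha}$ in $M$. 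At a generic point $p$, $\mathfrak{s}$ and $[e_{1,\alpha}]$ are local holomorphic frames of $L$ and $M$ respectively and $\mathfrak{s}_{2,\alpha}$ is nonvanishing, so the local order of $\theta_{\mathbb{E}}$ coincides with $\mathrm{ord}_p(dw_\alpha)=\mathrm{ord}_p(D)$. The main obstacle --- and the only remaining work --- is the bookkeeping at the isolated points where $\mathfrak{s}$ has a zero or pole (and so ceases to frame $L$) or where $\mathfrak{s}_{2,\alpha}=0$ (and so $[e_{1,\alpha}]$ fails to frame $M$); at such points I would pre-compose the flat frame by a suitable element of $\mathrm{SU}(2)$, corresponding to switching to the $1/w_\alpha$-chart on $\mathbb{P}^1$, to restore the generic setup, redo the same calculation, and conclude $\operatorname{Div}(\theta_{\mathbb{E}})=D$ globally.
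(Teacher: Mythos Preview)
Your argument is essentially correct and follows the same strategy as the paper, but your route to part~(2) is more laborious than it needs to be, and the ``main obstacle'' you flag is an artifact of your choice of frame rather than a genuine difficulty.

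For part~(1), your curvature computation is fine (and in fact works for any stable $E$, not just $\det E=\mathcal O_X$, since $p(s_L)=0$ kills the right-hand side regardless of $\lambda$). The paper's proof of the lemma instead verifies $\mathcal O_X$-linearity by a direct local computation; your approach is the one the paper later uses for the general Theorem~\ref{thm:rami}.

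For part~(2), the paper avoids your bookkeeping entirely by working with a \emph{local holomorphic frame} of $L$ rather than a global meromorphic section. Since $E$ is flat with $\mathrm{SU}(2)$ transition functions, one takes a flat frame $(e_{1,\alpha},e_{2,\alpha})$ and arranges (by a M\"obius transformation) that $w_\alpha\colon U_\alpha\to\mathbb C$ misses $\infty$. Then $\sigma_\alpha:=w_\alpha e_{1,\alpha}+e_{2,\alpha}$ is a nowhere-vanishing holomorphic frame of $L|_{U_\alpha}$, and the image $\bar e_{1,\alpha}$ of $e_{1,\alpha}$ in $M=E/L$ is automatically a frame of $M|_{U_\alpha}$. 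A one-line computation gives
\[
\theta_{\mathbb E}(\sigma_\alpha)=w_\alpha'\,\bar e_{1,\alpha}\,dz_\alpha,
\]
so $\operatorname{Div}(\theta_{\mathbb E})$ is visibly the ramification divisor of $\{w_\alpha\}$, which is $D$ by Remark~\ref{rem:divisorequ}. No special points, no chart-switching, no appeal to Lemma~\ref{lem:evendegree}(2).

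Your computation with $\mathfrak s$ is really the same calculation after multiplying by the meromorphic function $f_\alpha=\mathfrak s_{2,\alpha}$; since $\mathfrak s=f_\alpha\sigma_\alpha$, your formula $\theta_{\mathbb E}(\mathfrak s)=\mathfrak s_{2,\alpha}\,dw_\alpha\otimes[e_{1,\alpha}]$ is just $f_\alpha\cdot\theta_{\mathbb E}(\sigma_\alpha)$. The zeros and poles of $f_\alpha$ cancel once you divide them out, which is why your proposed fix would have worked---but it is simpler never to introduce them. (Incidentally, your diagnosis that $[e_{1,\alpha}]$ fails to frame $M$ when $\mathfrak s_{2,\alpha}=0$ is not quite right: $[e_{1,\alpha}]$ frames $M$ whenever $w_\alpha\neq\infty$; what fails there is that $\mathfrak s$ ceases to frame $L$.)
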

\begin{proof}
By the stability and flatness of $E$, there exists an open covering $\{ U_{\alpha} \}$ of $X$ and constant transition functions
\[ g_{\alpha \beta} = \begin{pmatrix}
a_{\alpha \beta} & b_{\alpha \beta} \\
c_{\alpha \beta} & d_{\alpha \beta}
\end{pmatrix} \in {\rm SU}(2) \subset {\rm SL}(2, \mathbb C) \]
on $U_{\alpha\beta} = U_{\alpha} \cap U_{\beta}$. Let $(e_{1, \alpha}, e_{2, \alpha})$ be a holomorphic flat frame of $E|_{U_{\alpha}}$ relative to $\{ U_{\alpha \beta}, g_{\alpha \beta}\}$. Then the Chern connection $D_{E}$ is given by
\[ D_{E} \colon \sum_{i=1,2} f_{i, \alpha} e_{i, \alpha} \mapsto \sum_{i=1,2} d f_{i, \alpha} \otimes e_{i, \alpha} \]
on $\mathcal{A}^{0}(E)|_{U_{\alpha}}$ and
\[ \partial_{E} \left(\sum_{i=1,2} f_{i, \alpha} e_{i, \alpha} \right) = \sum_{i=1,2} \partial f_{i, \alpha} \otimes e_{i, \alpha}. \]
Consider the following commutative diagram
\begin{equation}
\label{equ:initialcase}
\begin{split} \xymatrix{
 L\ar[r] ^-{i}\ar[drr]_-{\theta_{\mathbb E}} & E\ar[r] ^-{\partial_{E}} & E\otimes K_{X}\ar[d]^-{p \otimes \operatorname{id}} \\
           & & L^{-1}\otimes K_{X},
} \end{split}\end{equation}
where $p \otimes \operatorname{id}$ is induced by $E \rightarrow E/L \cong L^{-1}$, and $\theta_{\mathbb E} = (p\otimes \operatorname{id}) \circ \partial_{E} \circ i$. Let $\{ U_{\alpha}, w_{\alpha} \}$ be the local expressions of the section $s \colon X \to \mathbb P(E)$ corresponding to $L \to E$. Without loss of generality, we could assume that $w_{\alpha}$ is a holomorphic map from $U_{\alpha}$ to $\mathbb{C}$. Then $w_{\alpha} = \frac{a_{\alpha \beta}w_{\beta} + b_{\alpha \beta}}{c_{\alpha \beta}w_{\beta} + d_{\alpha \beta}}$ and $\sigma_{\alpha} = w_{\alpha}e_{1, \alpha} + e_{2, \alpha}$ is a frame of $L|_{U_{\alpha}}$. Moreover, the data $\{ U_{\alpha}, w_{\alpha} \}$ define a branched projective covering on $X$. It can be checked directly that $L$ is defined by transition functions $l_{\alpha \beta} = c_{\alpha \beta} w_{\beta} + d_{\alpha \beta}$. By Lemma \ref{lem:nonflat}, $s$ is not locally flat, i.e. $w_{\alpha}$ is not a constant map. Let $\{ U_{\alpha}, f_{\alpha}\sigma_{\alpha} \}$ be a meromorphic section of $L$ with respect to the transition functions $l_{\alpha \beta}$. Then $i (f_{\alpha} \sigma_{\alpha}) = f_{\alpha}w_{\alpha}\ e_{1, \alpha} + f_{\alpha} \ e_{2, \alpha}$ and
\begin{align*}
\partial_{E} (f_{\alpha}w_{\alpha} \, e_{1, \alpha} + f_{\alpha} \, e_{2, \alpha}) &= df_{\alpha} \otimes w_{\alpha} \, e_{1, \alpha} + f_{\alpha} \, dw_{\alpha} \otimes e_{1, \alpha} + df_{\alpha} \otimes e_{2, \alpha} \\
 &= df_{\alpha} \otimes \sigma_{\alpha} + f_{\alpha} \, dw_{\alpha} \otimes e_{1, \alpha}, \\
p \otimes \operatorname{id}\, (df_{\alpha} \otimes \sigma_{\alpha} + f_{\alpha} \, dw_{\alpha} \otimes e_{1, \alpha}) &= f_{\alpha} \, dw_{\alpha} \otimes \bar{e}_{1, \alpha} \\
&=f_{\alpha} \, w'_{\alpha} \otimes \bar{e}_{1, \alpha} dz_{\alpha},
\end{align*}
where $\bar{e}_{1, \alpha}dz_{\alpha}$ is a frame of $(L^{-1}\otimes K_{X})|_{U_{\alpha}}$. Hence
\[ \theta_{\mathbb E}(\sigma_{\alpha}) = w'_{\alpha} \otimes \bar{e}_{1, \alpha}dz_{\alpha},\quad \theta_{\mathbb E}(f_{\alpha}\sigma_{\alpha}) =f_{\alpha} \theta_{\mathbb E}(\sigma_{\alpha}).  \]
They imply that $\theta_{\mathbb E} \colon L \rightarrow L^{-1}\otimes K_{X}$ is $\mathcal{O}_{X}$-linear. (We have verified the first statement of Theorem \ref{thm:rami} in this special case). Therefore $\theta_{\mathbb E}$ can be viewed as a holomorphic section $\{ U_{\alpha}, w'_{\alpha}dz_{\alpha} \}$ of $\mathcal{H}om(L, L^{-1}\otimes K_{X}) = L^{-2} \otimes K_{X}$ with
\[ w'_{\alpha}dz_{\alpha} = l_{\alpha \beta}^{-2} \; w'_{\beta}dz_{\beta} \]
on $U_{\alpha \beta}$. Hence the ramification divisor $B_{\{U_{\alpha, w_{\alpha}}\}} = \operatorname{Div}(\theta_{\mathbb E})$ by Remark \ref{rem:divisorequ}.
\end{proof}

\begin{rem} {\rm
\label{rem:polystablecase}
Suppose $E$ is a polystable vector bundle of rank two with $\det E = \mathcal O_{X}$, and $L$ is a line subbundle of $E$ such that the section $s_{(L, E)} = \{ U_{\alpha}, w_{\alpha} \} \colon X \to \mathbb P(E)$ is {\it not} locally flat. By a similar argument, the ramification divisor $B_{\{U_{\alpha, w_{\alpha}}\}} = \operatorname{Div}(\theta_{\mathbb E})$.
}\end{rem}

\begin{lem}
\label{lem:doublecover}
Let $X$ be a compact Riemann surface of genus $g_{X} \geq 1$. Then there exists a compact connected Riemann surface $\widehat{X}$ of genus $\left( 2g_{X} - 1 \right)$ with an \'etale double cover $\pi \colon \widehat{X} \rightarrow X$.
\end{lem}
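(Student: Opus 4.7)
The plan is to construct $\widehat{X}$ directly from a nontrivial $2$-torsion line bundle on $X$, exactly in parallel with the cyclic \'etale covering used in Proposition \ref{prop:directimage}. Since $g_X \geq 1$, the Jacobian $\mathrm{Pic}^{0}(X)$ is a compact complex torus of positive dimension, so its group of $2$-torsion points has order $2^{2g_X} \geq 4$. Pick any nontrivial element $T \in \mathrm{Pic}^{0}(X)$ with $T^{2} \cong \mathcal O_{X}$. Fixing an isomorphism $T \otimes T \xrightarrow{\sim} \mathcal O_{X}$ turns the rank-two locally free sheaf
\[ \mathcal{L} \;:=\; \mathcal O_{X} \oplus T \]
into a sheaf of $\mathcal O_{X}$-algebras, and I would set
\[ \widehat{X} \;:=\; \operatorname{Spec} \mathcal{L}, \qquad \pi \colon \widehat{X} \to X. \]

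Next I would verify that $\pi$ is an \'etale double cover. The degree is $2$ because $\mathcal{L}$ is locally free of rank $2$ over $\mathcal O_{X}$. For the \'etale property, it suffices to check unramifiedness at each point, which is local: choosing a local trivialization $T|_{U} \cong \mathcal O_{U}$ under which the squaring map $T^{\otimes 2} \to \mathcal O_{X}$ becomes multiplication by a nowhere-vanishing unit $u \in \mathcal O_{X}(U)^{\times}$, the algebra $\mathcal{L}|_{U}$ is $\mathcal O_{U}[y]/(y^{2} - u)$, whose relative differentials $(2y)\, dy$ are nowhere-vanishing because $2u \neq 0$. Hence $\pi$ is \'etale.

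For connectedness, I would argue by contradiction using the nontriviality of $T$: if $\widehat{X}$ were disconnected, then being an \'etale degree-$2$ cover it would be the trivial cover $X \sqcup X$, and consequently $\pi_{*}\mathcal O_{\widehat{X}} \cong \mathcal O_{X} \oplus \mathcal O_{X}$ as $\mathcal O_{X}$-algebras. Comparing with the construction $\pi_{*}\mathcal O_{\widehat{X}} \cong \mathcal O_{X} \oplus T$ and taking the rank-one summand complementary to the unit section would force $T \cong \mathcal O_{X}$, contradicting our choice of $T$. Thus $\widehat{X}$ is connected, hence a compact Riemann surface.

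Finally, the genus is forced by Riemann--Hurwitz: since $\pi$ is unramified,
\[ 2g_{\widehat{X}} - 2 \;=\; 2 \,(2g_{X} - 2), \]
yielding $g_{\widehat{X}} = 2g_{X} - 1$, as required. The argument is essentially routine; the only place where genuine care is needed is the connectedness step, but the $\mathcal O_{X}$-algebra description of $\pi_{*}\mathcal O_{\widehat{X}}$ makes it immediate. No step presents a serious obstacle.
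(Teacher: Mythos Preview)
Your proposal is correct and follows essentially the same approach as the paper: both pick a nontrivial $2$-torsion line bundle, build the associated degree-$2$ cyclic cover, and derive connectedness by contradiction from the nontriviality of the line bundle. The only cosmetic difference is that you phrase the construction as $\operatorname{Spec}(\mathcal O_X \oplus T)$ (as in Proposition~\ref{prop:directimage}) and invoke Riemann--Hurwitz explicitly, whereas the paper describes $\widehat X$ concretely as the locus $\{\tau \in L : \tau^2 = 1\}$ inside the total space of $L$ and leaves the genus computation implicit.
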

\begin{proof}
Since $g_X \geq 1$, we could choose a non-trivial line bundle $L$ such that $L\otimes L \cong {\mathcal O}_X$. Defining
\[ \widehat{X} = \Big\{ \big(x, \tau(x)\big)\in L \mid x \in X, \tau(x) \in L|_{\{x\}} \text{ such that } \tau(x)^{2} = 1\Big\}, \]
as Exercise 1 in \cite[Chapter 2]{Voisin02}, we know that the natural projection $\pi\colon \widehat{X} \rightarrow X$ is an unramified double cover. Suppose that $\widehat{X}$ is not connected. Then $\widehat{X}$ must be a disjoint union of two copies of $X$, i.e.
$\widehat{X} = X_{1} \sqcup X_{2}$,
where $X_{1} \cong X \cong X_{2}$. The isomorphism $X \rightarrow X_{1} \subset \widehat{X}$ gives a nowhere vanishing section of $L$, which contradicts that $L$ is non-trivial.
\end{proof}

By using  Lemmas \ref{lem:flatcase} and \ref{lem:doublecover}, we could complete the proof of Theorem \ref{thm:rami}.
\begin{proof}[Proof of Theorem \ref{thm:rami}]
Note that
\begin{equation*}
\label{equ:curvature}
\begin{split}
\lambda I = D_{E} \circ D_{E} &= \begin{pmatrix}
D_{L} \circ D_{L} - \beta \wedge \beta^{*_{\beta}} & - \left(D_{L} \circ \beta + \beta \circ D_{M}\right) \\
\beta^{*_{\beta}} \circ D_{L} + D_{M} \circ \beta^{*_{\beta}} & -\beta^{*_{\beta}} \wedge \beta + D_{M} \circ D_{M}
\end{pmatrix} \\
&= \begin{pmatrix}
\Theta_{L} - \beta \wedge \beta^{*_{\beta}} & -D_{\mathcal{H}om(M, L)} (\beta)\\
D_{\mathcal{H}om(L, M)}(\beta^{*_{\beta}}) & -\beta^{*_{\beta}} \wedge \beta + \Theta_{M}
\end{pmatrix}.
\end{split}
\end{equation*}
Therefore $D_{L^{-1} \otimes M}(\beta^{*_{\beta}}) = 0$ and then $\bar{\partial}_{L^{-1} \otimes M}(\beta^{*_{\beta}}) = 0$. Hence $\beta^{*_{\beta}}$ is holomorphic which implies that $\theta_{\mathbb E} = \beta^{*_{\beta}} \in \operatorname{Hom}_{\mathcal O_{X}}(L, M \otimes K_{X})$.

Now, we prove the second assertion in Theorem \ref{thm:rami}. At first, we consider a polystable extension ${\Bbb E}:\, 0\to L\to E\to M \to 0$, where $E$ is a rank two polystable vector bundle of even degree. Then there is a line bundle $\eta$ such that $E \otimes \eta^{-1} := E_{0}$ is flat and $\det E_{0} = \mathcal O_{X}$. Moreover, $E/L \cong L^{-1} \otimes \eta^{2}$. Denote by $D_{E}$ and $D_{E_{0}}$ the Chern connections of $E$ and $E_{0}$ respectively. Then
\begin{align*}
\bar{\partial}_{E} & = \bar{\partial}_{E_{0}} \otimes \operatorname{id} + \operatorname{id} \otimes \bar{\partial}_{\eta}, \\
D_{E} & = D_{E_{0}} \otimes \operatorname{id} + \operatorname{id} \otimes D_{\eta}.
\end{align*}
Hence
\[ \partial_{E} = \partial_{E_{0}} \otimes \operatorname{id} + \operatorname{id} \otimes \partial_{\eta}. \]
The commutative diagram
\[\xymatrix{
 \mathcal A^{0}(L)\ar[r] ^-{i}\ar[drr]_-{\theta_{\mathbb E}} & \mathcal A^{0}(E)\ar[r] ^-{\partial_{E}} & E\otimes \mathcal{A}^{1,0}\ar[d]^-{p \otimes \operatorname{id}} \\
  & & L^{-1}\otimes \eta^{2} \otimes \mathcal{A}^{1,0},
}\]
can be rewritten as
\[\xymatrix{
 \mathcal A^{0}(L_{0} \otimes \eta)\ar[r] ^-{i}\ar[drr]_-{\theta_{\mathbb E}} & \mathcal A^{0}(E_{0}\otimes \eta)\ar[r] ^{\partial_{E_{0}} \otimes \operatorname{id} + \operatorname{id} \otimes \partial_{\eta}} & E_{0} \otimes\eta\otimes \mathcal{A}^{1,0}\ar[d]^-{p \otimes \operatorname{id}} \\
  & & {}\phantom{changying}L_{0}^{-1}\otimes \eta \otimes \mathcal{A}^{1,0},\phantom{changying}
}\]
where $L_{0} = L\otimes \eta^{-1}$ is a line subbundle of $E_{0}$. Denote by ${\Bbb E}_0$ the stable extension $0\to L_0\to E_0\to L_0^{-1}\to 0$.
Then $\theta_{\mathbb E} = \theta_{\mathbb E_{0}} \otimes \operatorname{id}_{\eta}$, which means that $\theta_{\mathbb E}$ and $\theta_{\mathbb E_{0}}$ are consistent as a section of $L_{0}^{-2} \otimes K_{X}$. Then, by  Lemma \ref{lem:flatcase}, we know that $D = \operatorname{Div}(\theta_{\mathbb E_{0}}) = \operatorname{Div}(\theta_{\mathbb E})$ since the two embeddings $L \to E$ and $L_{0} \to E_{0}$ are equivalent.


Suppose $E$ is stable and $\deg E$ is odd. Then we consider the \'etale double cover $\pi \colon \widehat{X} \to X$ given by Lemma \ref{lem:doublecover}. Let $\{ U_{\alpha}, w_{\alpha} \}$ be the local expression of $s_{(L,E)}$, which defines a branched projective covering of $X$.  Without loss of generality, we may assume that $U_{\alpha}$ is sufficiently small such that $\left\{\pi^{-1}(U_{\alpha}), \pi^*(w_{\alpha})\right\}$ is a branched projective covering of $\widehat{X}$. Then $\pi^*(\mathbb P(E)) = \mathbb{P}(\pi^*(E))$ is the indigenous bundle associated to $\{\pi^{-1}(U_{\alpha}), \pi^*(w_{\alpha})\}$ and
\[ \pi^{*} B_{\{U_{\alpha}, w_{\alpha}\}} = B_{\{\pi^{-1}(U_{\alpha}), \pi^*(w_{\alpha})\}}. \]
By the following diagram
\[\xymatrix{
 \mathcal A^{0}(\pi^{*}(L))\ar[r] \ar[drr]_-{\theta_{\pi^{*}({\mathbb E})}} & \mathcal A^{0}(\pi^{*}(E))\ar[r] ^{\partial_{\pi^{*}(E)}} & \pi^{*}(E) \otimes \mathcal{A}^{1,0}\ar[d] \\
 & & \pi^{*}(M) \otimes \mathcal{A}^{1,0},
}\]
where $M = E / L$, we obtain $\operatorname{Div}(\theta_{\pi^{*}({\mathbb E})}) = \pi^{*} \operatorname{Div}(\theta_{\mathbb E})$. Let $g$ be the irreducible metric defined by the stable embedding $L\to E$. We observe that $\pi^*g$ is a cone spherical metric representing the effective divisor $\pi^*D$ of even degree on $\widehat X$. Since the metric $\pi^*g$ is given by the embedding $\pi^*L\to \pi^*E$ and may be either reducible or irreducible, by Theorem \ref{thm:correspondence}, we have that $\pi^{*}(E)$ is  polystable and
\[ \operatorname{Div}(\theta_{\pi^{*}({\mathbb E})}) = B_{\{\pi^{-1}(U_{\alpha}), \pi^*(w_{\alpha})\}}. \]
Hence $\pi^{*} \operatorname{Div}(\theta_{\mathbb E}) = \pi^{*} B_{\{U_{\alpha}, w_{\alpha}\}}$. Therefore we obtain $\operatorname{Div}(\theta_{\mathbb E}) = B_{\{U_{\alpha}, w_{\alpha}\}} = D$ since $\pi$ is \'etale.

In summary, we obtain that if $E$ is a rank two stable vector bundle and $L$ is a line subbundle of $E$, then the irreducible metric defined by the embedding $L \to E$ represents the divisor $\operatorname{Div}(\theta_{\mathbb E})$.
\end{proof}

\begin{proof}[Proof of Theorem \ref{thm:expressionofR}]
Let $\mathbb{E}$ be a stable extension $0 \to L \xrightarrow{i} E \xrightarrow{p}  M \to 0$. Then the Chern connection $D_{E}$ on $E$ has the form of
\[ D_{E} = \begin{pmatrix}
D_{L} &  - \beta \\
  \beta^{*_{\beta}}     &D_{M}
\end{pmatrix}. \]
By chasing the diagram and using the Dolbeault isomorphism, 
we could obtain that $\beta$ is a representative 1-cocycle of the extension ${\Bbb E}$ in $H^{0,1}_{\bar{\partial}}(X, L \otimes M^{-1}) \cong \operatorname{Ext}_{X}^{1}(M, L)$. Let $H_{\bar{\partial}}^{0,1}(X, L \otimes M^{-1})^{s}$ be the Zariski open subset consisting of stable extensions in $H^{0,1}_{\bar{\partial}}(X, L \otimes M^{-1})$. Then we could define a map
\begin{align*}
\Phi \colon H_{\bar{\partial}}^{0,1}(X, L \otimes M^{-1})^{s} &\to H_{\bar{\partial}}^{1,0}(X, L^{-1} \otimes M) \\
  \beta &\mapsto \beta^{*_{\beta}}.
\end{align*}
Note that if $\mathbb{E}$ is the extension $0 \to L \xrightarrow{i} E \xrightarrow{p} M \to 0$ corresponding to $\beta$, then $\mu \mathbb{E}$ is the extension $0 \to L \xrightarrow{i} E \xrightarrow{p/\mu} M \to 0$ corresponding to $\mu \beta \; (\mu \in \mathbb{C}^{*})$. Hence,
\[ \Phi(\mu \beta) = \theta_{(\mu \mathbb E)} = (p/\mu \otimes \operatorname{id}) \circ \partial_{E} \circ i = \frac{\theta_{\mathbb E}}{\mu}  = \frac{\beta^{*_{\beta}}}{\mu}. \]
Then $\Phi$ naturally induces the ramification map
\begin{align*}
{\frak R}_{(L, M)}\colon \mathbb{P}\Big(H^{1}\big(X, L \otimes M^{-1}\big)^{s}\Big) &\rightarrow \mathbb{P}\Big(H^{0}\big(X, L^{-1} \otimes M \otimes K_{X}\big)\Big), \\
[{\Bbb E}] &\mapsto {\rm Div}(\theta_{\Bbb E}).
\end{align*}
modulo the two  $\mathbb C^{*}$-actions on the domain and the target of $\Phi$, respectively.

The real analyticity of the map
${\frak R}_{(L,\, M)}\colon [\beta]\mapsto [\beta^{*_\beta}]$ follows from that the Kobayashi-Hitchin correspondence is real analytic and both $\beta$ and $\beta^{*\beta}$ are harmonic.
\end{proof}

\begin{proof}[Proof of Corollary \ref{cor:image}]
The first statement of the corollary follows automatically from the definition of the ramification divisor map
${\frak R}_{(L,M)}$. By Theorems \ref{thm:stable} and \ref{thm:expressionofR},
${\frak R}_{(L,M)}$ is real analytic and defined on a Zariski open subset of $\mathbb{P}\Big(H^{1}\big(X, L \otimes M^{-1}\big)\Big)$ which is arcwise connected. Then ${\rm Im}\big({\frak R}_{(L,M)}\big)$ is an arcwise connected Borel subset in $\mathbb{P}\Big(H^{0}\big(X, L^{-1} \otimes M \otimes K_{X}\big)\Big)$ and we could talk about its Hausdorff dimension. The second statement of the corollary is reduced to the following claim.
\end{proof}

\noindent {\bf Claim}: {\it As $k=(\deg\, E-2\deg\, L) > 10g_{X} - 5$, there exists a stable extension ${\Bbb E}_0$ at which the tangent map ${\rm d}{\frak R}_{(L,\,M)}$ has rank $\geq 2\big(k+1-2g_X\big)$.}
\begin{proof} We use the notations in the proof of Theorem \ref{thm:expressionofR} to prove the claim. Without loss of generality, we may assume $\det E$ is isomorphic to a fixed line bundle $\xi$ of degree $0$ or $1$. By the proof of Theorem \ref{thm:corr}, as $k > 10g_{X} - 5$, the forgetful map ${\mathcal F}$ from $H^{1}\big(X, L \otimes M^{-1}\big)^{s}$ to the moduli space ${\mathcal M}_X(2,\,\xi)$ of rank two stable bundles with determinant $\xi$ on $X$ is surjective, which gives a fibration over ${\mathcal M}_X(2,\,\xi)$. Moreover, for a generic stable bundle $E_0$ in ${\mathcal M}_X(2,\,\xi)$, the fiber ${\mathcal F}^{-1}(E_0)$ is a Zariski open subset of $H^0(X,\,L^{-1}\otimes E_0)$ with complex dimension $(k+2-2g_X)$.
Let $h_0$ be the unique Hermitian-Einstein metric up to scaling on $E_0$ with respect to the K\"ahler form $\omega_X$ on $X$. Take a stable extension ${\Bbb E}_0\in {\mathcal F}^{-1}(E_0)$  defined by the harmonic form $\beta_0$ in ${\mathcal H}_{\bar \partial}^{0,1}(X,\,L\otimes M^{-1})$ with respect to the metric $h_0$. Moreover, by the definition of ${\mathcal F}^{-1}(E_0)$, $\beta_0$ could also be thought of as a nowhere vanishing section of $L^{-1}\otimes E$. Take a tangent vector $\alpha$ in the tangent space ${\rm T}_{{\Bbb E}_0}{\mathcal F}^{-1}(E_0)$  of ${\mathcal F}^{-1}(E_0)$ at ${\Bbb E}_0$. We could look at ${\rm T}_{{\Bbb E}_0}{\mathcal F}^{-1}(E_0)$ as $H^0(X,\,L^{-1}\otimes E_0)$, which is a complex linear subspace of $H^0(X,\,L^{-1}\otimes M)\cong {\mathcal H}_{\bar \partial}^{0,1}(X,\,L\otimes M^{-1})$.  As the complex parameter $t$ has sufficiently small norm, $\beta_0+t\alpha$ is also a nowhere section of $L^{-1}\otimes E$ and gives a family of stable extensions lying in ${\mathcal F}^{-1}(E_0)$. By Theorem \ref{thm:expressionofR}, restricting the map $\Phi$ to ${\mathcal F}^{-1}(E_0)$, we have that $$\Phi(\beta_0+t\alpha)=(\beta_0+t\alpha)^{*_{h_0}}=\beta_0^{*_{h_0}}+\bar t \alpha^{*_{h_0}}\quad {\rm as}\quad |t| \ll 1.$$ Hence the restriction of the tangent map ${\rm d}\Phi$ to ${\rm T}_{{\Bbb E}_0}{\mathcal F}^{-1}(E_0)$ is a non-degenerate complex anti-linear map. Then we complete the proof by the definition of ${\frak R}_{(L,\,M)}$ from $\Phi$.
\end{proof}

\section{Further discussion}

We may compare Corollary \ref{cor:finitemetric} with a theorem of A. Eremenko \cite{Er1905}, which says that for each $\lambda\in {\Bbb C} \setminus \{0,1\}$, there exist at most finitely many cone spherical metrics representing
the ${\Bbb R}$-divisor $\beta_1[0]+\beta_2[1]+\beta_3[\lambda]+\beta_4[\infty]$
on ${\Bbb P}^1$ such that each of $\beta_1,\cdots,\beta_4$  is a non-integer greater than $-1$.
Our proof of the corollary used the framework in Theorem \ref{thm:corr}, which is completely different from the function theoretical proof of Eremenko's theorem. Motivated by \cite[Theorem 1.5]{BdMM11} and \cite{CLWpre}, we speculate that the ``a.e.'' condition could be removed in Corollary \ref{cor:finitemetric} and there should exist uniform lower and upper bounds for the number of irreducible metrics representing a given effective divisor $D$ with odd degree in terms of $\deg\, D$ and $g_X$. This question has an interesting special case that the ramification divisor map ${\frak R}_{({\mathcal O}_X,\,M)}$ is a real analytic surjective map from ${\Bbb P}^{g_X-1}$ to itself, where $M\in {\rm Pic}^1(X)$.

We also compare Corollary \ref{cor:irr} with a theorem of Mondello-Panov \cite[Theorem D]{MP1807}. Their theorem says that for each compact Riemann surface $X$ of genus $g_{X}$ with $n$ marked points $p_1,\cdots, p_n$ in the Riemann moduli space ${\cal M}_{g_{X},n}$ with $2 - 2g_{X} - n < -1$, there exists a vector $\vartheta=(\vartheta_1,\cdots,\vartheta_n)$ of $n$ cone angles such that there is no spherical metric representing $D=\sum_{j=1}^n\,(\vartheta_j-1)p_j$ on $X$, but there does exist an irreducible metric with the given $n$ cone angles on another Riemann surface of genus $g_{X}$. Moreover, the vector $\vartheta$ of cone angles satisfies the non-bubbling condition ${\rm NB}_\vartheta (g_{X}, n) > 0$ \cite[Definition 1.5]{MP1807}.
Though the vectors of cone angles in Corollary \ref{cor:irr} do not satisfy such condition, we obtained the existence of irreducible metrics representing some divisors $D_0$ in each complete linear system $|D|$ such that ${\rm even}=\deg\, D > 2g_{X} - 2$ on every Riemann surface of genus $g_{X} \geq 2$. Moreover, we could specify such $D_0$'s in terms of the ramification divisor map.

Recall the ramification divisor map
\begin{align*}
{\frak R}_{(L, M)}\colon \mathbb{P}\Big(H^{1}\big(X, L \otimes M^{-1}\big)^{s}\Big) &\rightarrow |L^{-1} \otimes M \otimes K_{X}|, \quad
[{\Bbb E}]\mapsto {\rm Div}(\theta_{\Bbb E}).
\end{align*}
in Corollary \ref{cor:image}.
Xuwen Zhu and B.X. \cite[Section 5]{XuZhu19} proposed a question of finding irreducible metrics which have bounded 2-eigenfunctions for the associated Laplace-Beltrami operators. Motivated by a conversation with Xuwen Zhu, we conjecture that
the points of $\mathbb{P}\Big(H^{1}\big(X, L \otimes M^{-1}\big)^{s}\Big)$  at which the tangential map of ${\frak R}_{(L, M)}$  is {\it not} surjective give examples of such irreducible metrics by using the correspondence $\sigma$ in Theorem \ref{thm:corr}. On the other hand, we expect that the tangential map of ${\frak R}_{(L, M)}$ should be surjective almost everywhere in $\mathbb{P}\Big(H^{1}\big(X, L \otimes M^{-1}\big)^{s}\Big)$. If it were true, by Corollary \ref{cor:image} and the inverse function theorem,  we could obtain a much stronger existence result of irreducible metrics than Corollary \ref{cor:irr} that  the image of ${\frak R}_{(L, M)}$ contains at least an Euclidean open subset in the complete linear system $|L^{-1} \otimes M \otimes K_{X}|$.

We consider another question about the ramification divisor map ${\frak R}_{(L, M)}:\,[{\Bbb E}]\mapsto {\rm Div}(\theta_{\Bbb E})$. Since it is a real analytic map defined on a Zariski open subset of $\mathbb{P}\Big(H^{1}\big(X, L \otimes M^{-1}\big)\Big)$, we would like to investigate the possible limiting metric of the images ${\frak R}_{(L, M)}({\Bbb E}_n)$ of a sequence $\{{\Bbb E_n}\}$ of stable extensions  which converges to an  unstable and non-trivial extension ${\Bbb E}$ in $\mathbb{P}\Big(H^{1}\big(X, L \otimes M^{-1}\big)\Big)$. In particular, as $(\deg\,M - \deg\,L)$ is even, we could further assume that ${\Bbb E}$ is unstable and polystable. We speculate that in the latter case we may obtain the convergence of irreducible metrics of even degree to reducible ones.

We observe that on a compact Riemann surface $X$ of genus $g_X>0$, there exists  a canonical Hermitian metric $H$  on  $H^{1}\big(X, L \otimes M^{-1}\big)^{s}$ defined as follows: Given two tangent vectors $\beta_1,\,\beta_2$ at an stable extension ${\Bbb E}={\Bbb E}_\beta$\,:\,$0\to L\to E\to M\to 0$, identify them with their harmonic representatives in $H^{0,1}_{\bar{\partial}}(X, L \otimes M^{-1})$ respectively, which we denote by the same notions,  and define $H(\beta_1,\,\beta_2)$ to be $\int_X\,\langle \beta_1,\,\beta_2 \rangle_{h_\beta} \omega_X$, where
$h_\beta$ is the pointwise Hermitian inner product on the space of $\big(L\otimes M^{-1}\big)$-valued $(0,1)$-forms induced by both the stable extension ${\Bbb E}_\beta$ and the Hermitian-Einstein metric $h$ on $E$. $H$ induces a canonical closed 2-form
$\omega_{(L,\,M)}:=\frac{\sqrt{-1}}{2\pi}\partial {\overline \partial}\big(\log\, H(\beta,\,\beta)\big)$ on the domain ${\rm Dom}\big({\frak R}_{(L,M)}\big)=\mathbb{P}\Big(H^{1}\big(X, L \otimes M^{-1}\big)^{s}\Big)$ of the ramification divisor map ${\frak R}_{(L,M)}$.
Is $\omega_{(L,\,M)}$ a {\it K\" ahler form} on ${\rm Dom}\big({\frak R}_{(L,M)}\big)$? This question is particularly tempting in the case that the genus $g_X > 1$ and $(L,\,M)$ lies in
${\mathcal O}_X\times {\rm Pic}^1(X)$, since the positive answer to it would give a new class $\left\{\omega_M\right\}$ of K\" ahler metrics on the projective space ${\Bbb P}^{g_X-1}$ by the proof of Corollary \ref{cor:finitemetric}.

We find
 that the moduli space $\mathcal{SE}(X)$ of stable extensions in Theorem \ref{thm:corr} forms a special example of the more general moduli of extensions of holomorphic bundles on K\"ahler manifolds in Daskalopoulos-Uhlenbeck-Wentworth \cite{DUW95}. Moreover, we also find \cite{Tian92} where
Tian established some stability property for the tangent bundles of Fano manifolds with K\"ahler-Einstein metrics, which contains a high dimensional generalization of stable extensions in Theorem \ref{thm:corr}.
We would like to borrow the ideas from both \cite{Tian92, DUW95} to attack the problems in the preceding paragraphs in the future.

At last, we observe that there exists a parallel correspondence between general irreducible metrics whose cone angles does {\it not} necessarily lie in $2\pi{\mathbb Z_{>0}}$ and parabolic line subbundles of rank two parabolic stable bundles with parabolic degree {\it zero}. We have been establishing an algebraic framework for general cone spherical metrics on a compact Riemann surface in an on-going work and will give the details in a future paper.

\begin{center}
 {\bf Acknowledgements}
\end{center}

\noindent B.X. would like to thank Gang Tian, Rafe Mazzeo, Song Sun, Xuwen Zhu and Andriy Haydys for many insightful discussions, and their hospitality during B.X.'s visit to Peking University in Winter 2018, Stanford University and UC Berkeley in Spring 2019 and to University of Freiburg in Summer 2019.
J.S. appreciates her hospitality during his visit in Summer 2021 to Institute of Geometry and Physics, USTC, where he and the other two authors finalized this manuscript.
All the authors would also like to thank Xuemiao Chen, Yifei Chen, Jiahao Hu, Qiongling Li,
Mao Sheng, Xiaotao Sun, Botong Wang, Zhenjian Wang, Chengjian Yao, Chenglong Yu, Lei Zhang, Mingshuo Zhou, and Kang Zuo for their helpful suggestions and comments. We appreciate  the anonymous referee
for her/his many valuable comments which improve the exposition greatly.

L.L. is supported in part by the National Natural Science Foundation of China (Grant No. 11501418).
J.S. is partially supported by National Natural Science Foundation of China (Grant Nos. 12001399 and 11831013).
B.X. is supported in part by the National Natural Science Foundation of China (Grant Nos. 11571330, 11971450 and 12071449).
Both J.S. and B.X. are supported in part by the Fundamental Research Funds for the Central Universities.

\small

\vspace{0.5cm}

{\sc \noindent Lingguang Li\\
School of Mathematical Sciences\\
Tongji University\\
Shanghai 200092 China}\\
LiLg@tongji.edu.cn\\

{\small {\sc  \noindent Jijian Song\\
Center for Applied Mathematics\\
School of Mathematics, Tianjin University\\
Tianjin 300350 China}\\
\Envelope smath@tju.edu.cn\\

{\sc \noindent Bin Xu\\
CAS Wu Wen-Tsun Key Laboratory of Mathematics and\\
School of Mathematical Sciences\\
University of Science and Technology of China\\
Hefei 230026 China}\\
bxu@ustc.edu.cn}


\begin{thebibliography}{99}

 \bibitem{Atiyah55} M. F. Atiyah, \emph{Complex fibre bundles and ruled surfaces}, Proc. London. Math. Soc. {\bf 3}:5 (1955) 407-434.

 \bibitem{Atiyah57I} M. F. Atiyah, \emph{Complex analytic connections in fibre bundles}, Trans. Amer. Math. Soc. {\bf 85}:1 (1957) 181-207.

  \bibitem{Atiyah57II} M. F. Atiyah, \emph{Vector bundles over an elliptic curve}, Proc. London. Math. Soc. {\bf 84}:3 (1957) 414-452.

 \bibitem{Ba2000} E. Ballico, \emph{Extensions of stable vector bundles on smooth curves: Lange's conjecture}, An. Stiint. Univ. Al. I. Cuza Iasi. Mat. (N.S.) {\bf 46}:1 (2000) 149-156.

 \bibitem{BR98} E. Ballico and B. Russo, \emph{Exact sequence of stable bundles on projective curves},
 Math. Nachr. {\bf 194}:1 (1998) 5-11.

 \bibitem{BdMM11} D. Bartolucci, F. De Marchis and A. Malchiodi, \emph{Supercritical conformal metrics on surfaces with conical singularities}, Int. Math. Res. Not. IMRN 2011, no. 24, 5625-5643.

 \bibitem{BT02} D. Bartolucci and G. Tarantello, \emph{Liouville type equations with singular data and their applications to periodic multivortices for electroweak theory,} Comm. Math. Phys. {\bf 229}:1 (2002) 3-47.
 
 \bibitem{Beau2000} A. Beauville, \emph{On the stability of the direct image of a generic vector bundle}, Preprint (2000), available at \href{https://math.unice.fr/~beauvill/pubs/imdir.pdf}{https://math.unice.fr/~beauvill/pubs/imdir.pdf}.

  \bibitem{CLMP92} E. Caglioti, P. L. Lions, C. Marchioro and M. Pulvirenti, \emph{A special class of stationary flows for two-dimensional Euler equations: a statistical mechanics description}, Comm. Math. Phys. {\bf 143} (1992) 501-525.

  \bibitem{CLW14} C.-L. Chai, C.-S. Lin and C.-L. Wang, \emph{Mean field equations, hyperelliptic curves and modular forms: I}, Cambridge J. Math. {\bf 3} (2015) 127-274.

  \bibitem{CLWpre} C.-L. Chai, C.-S. Lin and C.-L. Wang, \emph{Private communication}.

  \bibitem{CL15} C.-C. Chen and C.-S. Lin, \emph{Mean field equation of Liouville type with singular data: topological degree}, Comm. Pure Appl. Math.  {\bf 68}:6 (2015) 887-947.

  \bibitem{CLW2004} C.-C. Chen, C.-S. Lin and G. Wang, \emph{Concentration phenomena of two-vertex solutions in a Chern-Simons model}, Ann. Scuola Norm. Sup. Pisa Cl. Sci. {\bf 3}:5 (2004) 367-397.

  \bibitem{CWWX2015} Q. Chen, W. Wang, Y. Wu and B. Xu, \emph{Conformal metrics with constant curvature one and finitely many conical singularities on compact Riemann surfaces}, Pacific J. Math. {\bf 273}:1 (2015) 75-100.

  \bibitem{DUW95} G. Daskalopoulos, K. Uhlenbeck and R. Wentworth, \emph{Moduli of extensions of holomorphic bundles on K\"ahler manifolds}, Comm. Anal. Geom. {\bf 3}:2 (1995) 479-522.

  \bibitem{Er04} A. Eremenko, \emph{Metrics of positive curvature with conic singularities on the sphere}, Proc. Amer. Math. Soc. {\bf 132}:11 (2004) 3349-3355.

  \bibitem{Er1706} A. Eremenko, \emph{Co-axial monodromy}, Ann. Sc. Norm. Super. Pisa Cl. Sci. (5) Vol. {\bf 20}:2 (2020) 619-634.

  \bibitem{EGT1405} A. Eremenko,  A. Gabrielov and V. Tasarov, \emph{Metrics with conic singularities and spherical
polygons},  Illinois J. Math. {\bf 58}:3 (2014) 739-755.

  \bibitem{EG15} A. Eremenko and A. Gabrielov, \emph{On metrics of curvature 1 with four conic singularities on tori and on the sphere}, Illinois J. Math. {\bf 59}:4 (2015) 925-947.

  \bibitem{EGT1409} A. Eremenko,  A. Gabrielov and V. Tasarov, \emph{Metrics with four conic singularities and
spherical quadrilaterals}, Conform. Geom. Dyn. {\bf 20} (2016) 128-175.

  \bibitem{EGT1504} A. Eremenko,  A. Gabrielov and V. Tasarov, \emph{Spherical quadrilaterals with three non-integer
angles}, J. Math. Phys. Anal. Geo. {\bf 12}:2 (2016) 134-167.

\bibitem{Er1905} A. Eremenko, \emph{Metrics of constant positive curvature with four conic singularities on the sphere}, Proc. Amer. Math. Soc. {\bf 148}:9 (2020) 3957-3965.

\bibitem{Fang2019} H. Fang, \emph{The Gauss-Bonnet formula for a conformal metric with finitely many cone or cusp singularities on a compact Riemann surface}, \href{https://arxiv.org/abs/1912.01187}{arXiv:1912.01187}.


  \bibitem{Gro1958} A. Grothendieck, \emph{G\'eom\'etrie formelle et g\'eom\'etrie alg\'ebrique}, S\'eminaire N. Bourbaki, {\bf 5} (1958-1960) 193-220.

  \bibitem{Gunning67} R. C. Gunning, \emph{Special coordinate coverings of Riemann surfaces}, Math. Ann. {\bf 170} (1967) 67-86.



  \bibitem{Hei62} M. Heins, \emph{On a class of conformal metrics}, Nagoya Math. J. {\bf 21} (1962) 1-60.


  \bibitem{Lan83} H. Lange,  \emph{Zur Klassifikation von Regelmanningfaltigkeiten}, Math. Ann. {\bf 262}
(1983) 447-459.

  \bibitem{LN83} H. Lange and M. S. Narasimhan, \emph{Maximal subbundles of rank two bundles on curves}, Math. Ann. {\bf 266} (1983) 55-72.

  \bibitem{Luo93} F. Luo, \emph{Monodromy groups of projective structures on punctured surfaces}, Invent. Math. {\bf 111}:3 (1993) 541-555.


  \bibitem{Mand72} R. Mandelbaum, \emph{Branched structures on Riemann surfaces}, Trans. Amer. Math. Soc. {\bf 163} (1972) 261-275.

  \bibitem{Mand73} R. Mandelbaum, \emph{Branched structures and affine and projective bundles on Riemann surfaces}, Trans. Amer. Math. Soc. {\bf 183} (1973) 37-58.

  \bibitem{McOwen88} R. C. McOwen, \emph{Point singularities and conformal metrics on Riemann surfaces}, Proc. Amer. Math. Soc. {\bf 103}:1 (1988) 222-224.

  \bibitem{MP1505} G. Mondello and D. Panov, \emph{Spherical metrics with conical singularities on a 2-sphere: angle constraints}, Int. Math. Res. Not. IMRN 2016, no. 16, 4937-4995.

   \bibitem{MP1807} G. Mondello and D. Panov, \emph{Spherical surfaces with conical points: systole inequality and moduli spaces with many connected components}, Geom. Funct. Anal.  {\bf 29}:4 (2019) 1110-1193.

 \bibitem{MZ1710} R. Mazzeo and X. Zhu, \emph{Conical metrics on Riemann surfaces I: the compactified configuration space and regularity}, Geom. Topol. {\bf 24}:1 (2020) 309-372.

 \bibitem{MZ1906} R. Mazzeo and X. Zhu, \emph{Conical metrics on Riemann surfaces, II: spherical metrics}, IMRN, rnab011 (2021), \href{https://academic.oup.com/imrn/advance-article/doi/10.1093/imrn/rnab011/6161425}{https://doi.org/10.1093/imrn/rnab011}.

  \bibitem{NS1965} M. S. Narasimhan and C. S. Seshadri, \emph{Stable and unitary vector bundles on a compact Riemann surface}, Ann. of Math. {\bf 82} (1965) 540-567.

  \bibitem{Pi1905} {\' E}. Picard, \emph{De l'int{\' e}gration de l'{\' e}quation $\Delta u =e^u$ sur une surface
Riemann ferm{\' e}e}, J. reine angew. Math. {\bf 130} (1905) 243-258.

 \bibitem{Po1898} H. Poincar{\' e}, \emph{Fonctions fuchsiennes et l'{\' e}quation $\Delta u =e^u$},
J. de Math. Pures et Appl. {\bf 5}:4 (1898) 137-230.


  \bibitem{SCLX2017} J. Song, Y. Cheng, B. Li and B. Xu, \emph{Drawing cone spherical metrics via Strebel differentials}, Int. Math. Res. Not. IMRN 2020, no. 11, 3341-3363.



 \bibitem{RT99} B. Russo and M. Teixidor i Bigas, \emph{On a conjecture of Lange}, J. Algebraic Geom. {\bf 8}:3 (1999) 483-496.

 \bibitem{Tian92} G. Tian, \emph{On the stability of the tangent bundles of Fano varieties}, Int. J. Math. {\bf 3} (1992) 401-413.

 \bibitem{Tro86} M. Troyanov, \emph{Les surfaces euclidiennes {\' a} singularit{\' e}s coniques},  Enseign. Math. {\bf 32(2)}:1-2 (1986)  79-94.

  \bibitem{Tro89} M. Troyanov, \emph{Metrics of constant curvature on a sphere with two conical singularities}, Differential geometry (Pe${\rm \tilde{n}}${\' i}scola, 1988), 296-306, Lecture Notes in Math. {\bf 1410}, Springer, Berlin, 1989.

  \bibitem{Troyanov91} M. Troyanov, \emph{Prescribing curvature on compact surfaces with conical singularities}, Trans. Amer. Math. Soc. {\bf 324}:2 (1991) 793-821.

  \bibitem{UY2000} M. Umehara and K. Yamada, \emph{Metrics of constant curveture $1$ with three conical singularities on the $2$-sphere}, Illinois J. Math. {\bf 44}:1 (2000) 72-94.

  \bibitem{Voisin02} C. Voisin, \emph{Hodge theory and complex algebraic geometry I}, Cambridge University Press, 2002. Translated from the French by Leila Schneps.

  \bibitem{XuZhu19} B. Xu and X. Zhu, \emph{Spectral properties of reducible conical metrics}, Illinois J. Math. (2021) DOI: 10.1215/00192082-9043431.

  \bibitem{Zhu1902} X. Zhu, \emph{Rigidity of a family of spherical conical metrics}, New York J. Math. {\bf 26} (2020) 272-284.


\end{thebibliography}
\end{document}